\title{\textbf{Some rigidity results related to the Obata type equation}}
\author{Yiwei Liu and Yi-Hu Yang\footnote{Partially supported by NSF of China (No. 12071283)}}
\date{\today}
\chardef\bslash=`\\ 
\newtheorem{theorem}{\bf Theorem}[section]
\newtheorem{lemma}[theorem]{\bf Lemma}
\newtheorem{proposition}[theorem]{\bf Proposition}
\newtheorem{corollary}[theorem]{\bf Corollary}
\newenvironment{proof1}{\noindent{\em Proof of Proposition  \ref{theorem: 1.1} (1):}}{\quad \hfill$\Box$\vspace{2ex}}
\newenvironment{proof2}{\noindent{\em Proof of Theorem  \ref{theorem: 1.2}:}}{\quad \hfill$\Box$\vspace{2ex}}
\newenvironment{proof3}{\noindent{\em Proof of Theorem  \ref{theorem: 1.3}:}}{\quad \hfill$\Box$\vspace{2ex}}
\newenvironment{proof4}{\noindent{\em Proof of Theorem  \ref{theorem: 1.4}:}}{\quad \hfill$\Box$\vspace{2ex}}
\newenvironment{proof5}{\noindent{\em Proof of Proposition  \ref{theorem: 1.1} (2):}}{\quad \hfill$\Box$\vspace{2ex}}
\newenvironment{remark}{\noindent{\bf Remark.}}{\vspace{2ex}}
\newcommand{\eval}[2][\right]{\relax
  \ifx#1\right\relax \left.\fi#2#1\rvert}
\begin{document}
\maketitle

\renewcommand{\sectionmark}[1]{}

\begin{abstract}
    Let $(\Omega^{n+1},g)$ be an $(n + 1)$-dimensional smooth complete connected Riemannian manifold 
    with compact boundary $\partial\Omega=\Sigma$ and $f$ a smooth function on $\Omega$ which satisfies 
    the Obata type equation $\nabla^2 f -fg =0$ with Robin boundary condition $f_{\nu} = cf$, 
    where $c=\coth{\theta}>1$. In this paper, we provide some rigidity results based on the warped product structure of $\Omega$ determined by the equation $\nabla^2 f -fg =0$ and appropriate curvature assumptions. We also apply a similar method to the Obata type equation $\nabla^2 f +fg =0$ 
    and get a rigidity result on the standard sphere $\mathbb{S}^{n+1}$.
\end{abstract}

\section{Introduction}

Let $(\Omega^{n+1},g)$ be an $(n + 1)$-dimensional ($n\geq 2$) smooth complete connected Riemannian manifold with compact boundary $\partial\Omega =\Sigma$. In this paper, we concern the Obata type equation $\nabla^2 f -fg =0$ with Robin boundary condition $f_{\nu}=cf$ and related rigidity of $\Omega$. 

For a complete (compact) Riemannian manifold $(M,g)$ with boundary or without boundary, one can consider 
the following Obata type equation
$$
\nabla^2 f + kfg = 0,
$$
where $k\in \{1,0,-1\}$. 

For the case $k=1$ and manifolds without boundary, the above equation 
is related to many rigidity results. In \cite{lichnerowicz1958geometrie}, Lichnerowicz proved 
that the first eigenvalue $\lambda_1$ of a closed manifold of dimension $n$ whose Ricci curvature has
lower bound $n-1$ is at least $n$, that is, $\lambda_1 \geq n$. Accordingly, Obata \cite{MR0142086} 
proved that if the equality holds, then the manifold must be isometric to a standard sphere 
by using the following equation
$$
\nabla^2 f + fg = 0.
$$

For a compact manifold $(\Omega ^{n+1} , g)$ with boundary $\partial\Omega=\Sigma$, 
one also can consider the Obata type equation with various boundary conditions. 
In \cite{reilly1977applications}, Reilly obtained the Lichnerowicz-type lower bound 
for the first Dirichlet eigenvalue $\mu_1$ on $\Omega $ with $\mbox{Ric}^{\Omega} \geq n $ 
and the boundary $\Sigma$ being of non-negative mean curvature, that is, $\mu_1 \geq n+1$. 
In particular, he also proved that $\mu_1 = n+1$ if and only if $\Omega $ is isometric to 
the standard hemisphere. Here, the proof of the rigidity result depends again 
on the following Obata type equation with Dirichlet boundary condition
$$
\left\{
\begin{aligned}
&\nabla^2f +fg =0,\,\,{\text{in}}\,\, \Omega,\\
&f=0,\,\,\,\,\,\,\,\,\,\,\,\,\,\,\,\,\,\,\,\,\,\,\,{\text{on}} \,\, \Sigma.
\end{aligned}
\right.
$$
Later, Escobar \cite{MR1072395} and Xia \cite{MR1226678} independently proved that the first Neumann eigenvalue $\eta_1$ on $\Omega $ satisfies $\eta_1 \geq n+1 $ if $\mbox{Ric}^{\Omega} \geq n$ and the boundary is convex. Moreover, $\eta_1 = n+1$ if and only if $\Omega $ is isometric to the standard hemisphere and the proof of this rigidity result also depends on the Obata type equation with Neumann boundary condition
$$
\left\{
\begin{aligned}
&\nabla^2f +fg =0,\,\,{\text{in}}\,\, \Omega,\\
&f_{\nu}=0,\,\,\,\,\,\,\,\,\,\,\,\,\,\,\,\,\,\,\,\,\,{\text{on}} \,\, \Sigma,
\end{aligned}
\right.
$$
where $\nu$ is the outward unit normal. Recently, Chen, Lai and Wang \cite{chen2020obata} studied the Obata type equation with Robin boundary condition
$$
\left\{
\begin{aligned}
&\nabla^2f +fg =0,\,\,{\text{in}}\,\, \Omega,\\
&f_{\nu}-cf=0,\,\,\,\,\,\,\,{\text{on}} \,\, \Sigma,
\end{aligned}
\right.
$$
and also obtained many rigidity results.

For the case $k=0$, the following boundary value problem
$$
\left\{
\begin{aligned}
&\nabla^2f  =0,\,\,\,\,\,\,\,\,\,\,\,\,\,\,\,{\text{in}}\,\,\Omega,\\
&f_{\nu}-cf=0,\,\,\,\,\,\,\,{\text{on}}\,\, \Sigma
\end{aligned}
\right.
$$
has been further studied, where $c$ is a positive constant. In \cite{raulot2012first}, Raulot and Savo proved that $\Omega$ is isometric to a Euclidean ball with radius $\frac{1}{c}$ if $\Omega$ has 
non-negative sectional curvature, the principal curvatures of the boundary $\Sigma$ are 
bounded from below by $c$, and there exists a non-constant smooth function $f$ satisfying the above boundary value problem. Later, Xia and Xiong \cite{xia2023escobar} extend this result to a compact 
manifold $\Omega$ whose Ricci curvature satisfies $\mbox{Ric}^{\Omega} \geq 0$ and the mean curvature 
$H$ of $\Sigma$ $\geq c$. It is worth noting that their result is used to prove the rigidity part in Escobar’s conjecture for manifolds with non-negative
sectional curvature.

As for the case $k=-1$, Kanai \cite{MR0707845} proved that a complete manifold $(M^n,g)$ is isometric 
to the standard hyperbolic space $\mathbb{H}^n$ if and only if there exists a non-constant function $f$ 
on $M$ with a critical point and satisfying $\nabla^2 f-fg=0$. At the same time, he also discussed 
the case that $f$ does not have critical points. In \cite{MR4080902}, Galloway and Jang focused on the following Dirichlet boundary problem
$$
\left\{
\begin{aligned}
&\nabla^2f -fg =0,\,\,{\text{in}}\,\, \Omega,\\
&f=a,\,\,\,\,\,\,\,\,\,\,\,\,\,\,\,\,\,\,\,\,\,\,\,{\text{on}}\,\, \Sigma,
\end{aligned}
\right.
$$
and proved some rigidity results. For more information about the Obata type equations, interested readers also can refer to \cite{almaraz2017rigidity} and \cite{MR3326231} . 

Very recently, Lai and Zhou \cite{MR4560221} studied the obata type equation $\nabla^2 f -fg=0$ with various boundary conditions and $f$ being of interior critical points, and gave some rigidity results in the standard hyperbolic space $\mathbb{H}^{n+1}$; in particular, they considered the boundary condition
$f_{\nu}=cf$ on $\Sigma$ with $0< c< 1$, and showed that 


{\it If $f$ is of an interior critical point, then $\Omega$ is isometirc to a domain in $\mathbb{H}^{n+1}$ 
bounded by $\mathbb{S}^k(\sqrt{\frac{c^2}{1-c^2}})\times \mathbb{H}^{n-k}(c^2-1)$, for some $k=0,1,\cdots,n$,
where $\mathbb{S}^k(\sqrt{\frac{c^2}{1-c^2}})$ is the sphere of radius $\sqrt{\frac{c^2}{1-c^2}}$ 
and $\mathbb{H}^{n-k}(c^2-1)$ is the simply connected space with constant sectional curvature $c^2-1$.}

In this paper, we concentrate on the equation $\nabla^2f-fg =0$ with the same boundary condition $f_{\nu}=cf$
but $c=\coth{\theta}>1$ with some $\theta >0$, i.e. the following
\begin{equation}\label{eqn-1}
\begin{cases}
\begin{aligned}
&\nabla^2 f- fg=0,& {\text{in}} \,\Omega,\\
&f_{\nu}-cf=0,&{\text{on}} \, \Sigma.
\end{aligned}
\end{cases}
\end{equation}
It should be pointed out that in this case $f$ has 
no critical points (cf. Proposition \ref{prop: 2.1}); due to this reason,  
in order to get some finer rigidity results, we need some additional curvature restrictions. 
To this end, we first introduce the following two curvature assumptions
\\
\\
($K_1$) Let Ric$^{\Omega}$ be the Ricci curvature tensor of $\Omega$ and $H$ be the mean
curvature of $\Sigma$. We assume that
$$
\mbox{Ric}^{\Omega}\geq -n
$$
and
$$H\geq c.$$
\\
($K_2$) Let $S$ be some boundary component, $R^{\Omega}$ be the curvature tensor of $\Omega$, $h$ be the second fundamental form of $S$ and $k\in  \{2,\cdots,n\}$. We assume that
$$
-\sum_{j=2}^{k} R^{\Omega}(e_1,e_j,e_1,e_j) \geq (k-1)(1-\frac{2}{c^2})
$$
for any orthonormal vectors $\{e_1,e_2,\cdots,e_{k}\}$ in $TS$ and
$$h<cg|_{S}.$$

Now, we can state our rigidity results.
Our first result is the following
\begin{theorem}
Let $(\Omega^{n+1}, g)$ be an $(n+1)$-dimensional ($n\geq 2$) smooth complete connected Riemannian manifold with compact boundary $\Sigma$ which satisfies the assumption ($K_1$).  Assume that there exists a non-constant function $f\in \, C^{\infty}(\Omega)$ which is the solution of equation (\ref{eqn-1}), then $\Omega$ is isometric to a geodesic ball
of radius $\tanh^{-1} {(\frac{1}{c})}$ in the hyperbolic space $\mathbb{H}^{n+1}$.
\label{theorem: 1.3}
\end{theorem}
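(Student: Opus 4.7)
The plan is to combine the warped-product structure forced by $\nabla^2 f - fg = 0$ with Reilly's identity and the boundary hypotheses, and then to identify $\Omega$ with a geodesic ball in $\mathbb{H}^{n+1}$.

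First, I will derive the automatic pointwise identities forced by the equation alone. Differentiating $|\nabla f|^2 - f^2$ and using $\nabla^2 f = fg$ gives $|\nabla f|^2 = f^2 + C$ for a constant $C$; restricting to $\Sigma$ via $f_\nu = cf$ reads $|\nabla^\Sigma f|^2 = C - (c^2-1)f^2$, so $C \geq 0$, and in fact $C > 0$ since $f$ has no critical points (Proposition 2.1) and $c > 1$. Bochner's formula combined with $\nabla^2 f = fg$ forces the automatic pointwise identity $\mathrm{Ric}(\nabla f, \nabla f) = -n|\nabla f|^2$; combined with $\mathrm{Ric} \geq -ng$, the nonnegative tensor $\mathrm{Ric} + ng$ then annihilates $\nabla f$, so $\mathrm{Ric}(\nabla f, \cdot) = -n\,g(\nabla f, \cdot)$. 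Restricting the ambient Hessian to $\Sigma$ via $(\nabla^\Sigma)^2 f = fg^\Sigma - cf\,h^\Sigma$, and differentiating $|\nabla^\Sigma f|^2 = C - (c^2-1)f^2$ in the direction $\nabla^\Sigma f$, produces the pointwise boundary identity
\[
h^\Sigma(\nabla^\Sigma f, \nabla^\Sigma f) \;=\; c\,|\nabla^\Sigma f|^2,
\]
valid wherever $f \neq 0$ on $\Sigma$, and hence everywhere on $\Sigma$ by continuity.

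Second, I will apply Reilly's formula to $f$. Using $\Delta f = (n+1)f$, $|\nabla^2 f|^2 = (n+1)f^2$, the pointwise Bochner identity above, and the Green identity $(n+2)\int_\Omega f^2 + C|\Omega| = c\int_\Sigma f^2$ obtained by integrating $\mathrm{div}(f\nabla f)$, the volume side of Reilly collapses and the formula reduces to the boundary integral identity
\[
\int_\Sigma h^\Sigma(\nabla^\Sigma f, \nabla^\Sigma f)\,dA \;=\; c\int_\Sigma (cH^\Sigma - n)\,f^2\,dA,
\]
where $H^\Sigma$ denotes the trace of $h^\Sigma$. Substituting the pointwise boundary identity and invoking the mean-curvature hypothesis $H^\Sigma \geq nc$ yields $C|\Sigma| \geq (n+1)(c^2-1)\int_\Sigma f^2$, with equality forcing $H^\Sigma \equiv nc$ on $\Sigma$. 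A matching reverse inequality extracted from a Heintze--Karcher-type comparison using $\mathrm{Ric}^\Omega \geq -ng$ upgrades this to an equality, and combined with the pointwise null-cone identity for the nonnegative tensor $h^\Sigma - cg^\Sigma$, it promotes $h^\Sigma$ to $cg^\Sigma$ everywhere on $\Sigma$: the boundary is totally umbilical with every principal curvature equal to $c$.

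Third, since $\nabla f$ never vanishes, define $t := \sinh^{-1}(f/\sqrt{C})$; then $|\nabla t| = 1$ and a direct computation gives $\nabla^2 t = \tanh(t)(g - dt\otimes dt)$, so the level set $\{t = 0\}$ is totally geodesic and every level set of $t$ is totally umbilical with shape operator $\tanh(t)\,I$. In Fermi coordinates about $\{t = 0\}$ the metric becomes $g = dt^2 + \cosh^2(t)\,h_0$, and the Riccati equation for the umbilical foliation immediately gives $K(\nabla t, X) = -1$ for every horizontal $X$. The warped-product Ricci formula combined with $\mathrm{Ric}^\Omega \geq -ng$ forces $\mathrm{Ric}_{h_0} \geq -(n-1)h_0$, and the boundary umbilicity from step two, propagated by the Codazzi equation along the $t$-foliation, promotes this to $\mathrm{Ric}_{h_0} = -(n-1)h_0$ and pins the sectional curvatures of $h_0$ to $-1$; hence $\Omega$ has constant sectional curvature $-1$. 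An Alexandrov-type uniqueness in $\mathbb{H}^{n+1}$ then identifies the closed totally umbilical hypersurface $\Sigma$ with a geodesic sphere of radius $R = \tanh^{-1}(1/c)$, so $\Omega$ is isometric to the corresponding geodesic ball. The main obstacle is precisely this third step: a Ricci lower bound alone does not in general force constant sectional curvature on a warped-product fiber when $n \geq 3$, so one must carefully couple the boundary umbilical rigidity from step two with the Riccati and Codazzi structure along the $t$-foliation to propagate the Einstein condition for $h_0$ into full constant-curvature rigidity.
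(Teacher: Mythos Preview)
Your outline has two genuine gaps, both of which you partly acknowledge. First, in step two, the ``matching reverse inequality extracted from a Heintze--Karcher-type comparison'' is asserted but never made precise: it is not clear what statement you intend or how the hypotheses $\mathrm{Ric}\geq -n$, $H\geq c$ would yield $C|\Sigma|\leq (n+1)(c^2-1)\int_\Sigma f^2$. Without that, you do not obtain $H\equiv c$, and even if you did, having $\mathrm{tr}\,h = nc$ together with $h(\nabla^\Sigma f,\nabla^\Sigma f)=c|\nabla^\Sigma f|^2$ does not force $h=cg|_\Sigma$ when $n\geq 3$; the ``null-cone identity'' you invoke would need $h-cg|_\Sigma$ to be semidefinite, which is not part of the hypotheses. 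Second, and more seriously, step three does not go through: the warped-product structure $g=dt^2+\cosh^2(t)\,h_0$ together with $\mathrm{Ric}^\Omega\geq -n$ gives only $\mathrm{Ric}_{h_0}\geq -(n-1)h_0$, and there is no mechanism by which Codazzi along the $t$-foliation or umbilicity of $\Sigma$ upgrades this to constant curvature of $h_0$ for $n\geq 3$. The boundary $\Sigma$ is a graph over $\Omega_0$, and its second fundamental form constrains $h_0$ only along $\partial\Omega_0$, not in the interior. You flag this as ``the main obstacle'' but do not resolve it; as written, the argument is circular at exactly the point where rigidity must be established.

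The paper's proof avoids this obstacle by a dimensional reduction rather than by attempting to show constant curvature directly. From the warped-product structure (Proposition~\ref{theorem: 1.1}(2)) one knows $\Omega$ is a $\mathbb{Z}_2$-symmetric domain bounded by graphs $\pm\phi$ over the compact hypersurface-with-boundary $\Omega_0=\{f=0\}$. One then shows: (i) $\mathrm{Ric}^{\Omega_0}\geq -(n-1)$ via the Gauss equation and the identity $R^{\widehat\Omega}(\partial_t,\cdot,\partial_t,\cdot)=-g$; (ii) the mean curvature of $\partial\Omega_0$ in $\Omega_0$ is $\geq c$; and (iii) using the graph equation for $\phi$, the auxiliary function $v=\tanh^{-1}\big(\tfrac{1}{c}\sqrt{1-(c^2-1)\sinh^2\phi}\big)$ satisfies $|\nabla_{\Omega_0}v|\equiv 1$, which forces the inradius of $\Omega_0$ to be at least $\tanh^{-1}(1/c)$. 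These three ingredients feed into the inradius rigidity theorem of Li--Xia (Theorem~0.3 of \cite{MR3404748}), which identifies $\Omega_0$ as a geodesic ball in $\mathbb{H}^n$; the explicit form of $\phi$ then identifies $\Sigma$ via Proposition~\ref{prop:2.3}. The key idea you are missing is this reduction to a sharp inradius comparison on $\Omega_0$, which replaces the unavailable sectional-curvature rigidity by a robust Ricci-level rigidity statement.
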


It is worth noting that Theorem \ref{theorem: 1.3} can be seen as an extension of Corollary 4.5 in \cite{Liu} and the idea of the proof comes from Proposition 4.3 in \cite{xia2023escobar}. We also point out that the curvature assumption ($K_1$) in Theorem \ref{theorem: 1.3} implies that $\Omega$ is compact (see Theorem 0.1 in \cite{MR3404748}) and $\Sigma$ is connected (see Lemma 2.1 in \cite{MR3373063}).

Based on the curvature assumption ($K_2$) and a lower bound condition for the diameter of boundary components, we can prove the following
\begin{theorem}
Let $(\Omega^{n+1}, g)$ be an $(n+1)$-dimensional ($n\geq 2$) smooth complete connected Riemannian manifold with compact boundary $\Sigma$ which satisfies the assumption ($K_2$).
Assume that there exists a non-constant function $f\in \, C^{\infty}(\Omega)$ which is the solution of equation (\ref{eqn-1}) and the diameter of any boundary component has a lower bound $\sqrt{\frac{c^2}{c^2-1}}\pi$. Then $\Sigma$ has at most two components and any boundary component is isometric to the sphere $\mathbb{S}^{n}(\sqrt{\frac{c^2}{c^2-1}})$ of radius $\sqrt{\frac{c^2}{c^2-1}}$. In particular, we also have
\begin{itemize}
\item [(1)]
If $\Sigma$ is connected, then $\Omega$ is isometirc to the warped product space
$$
\mathbb{S}^{n} \times [-\theta,\infty)_{t},\,\,\,g=dt^2+(\cosh{t})^2 g_{\mathbb{S}^{n}}.
$$
\item [(2)]
If $\Sigma$ is disconnected,
then $\Omega$ is isometirc to the warped product space
$$
\mathbb{S}^{n} \times [-\theta,\theta]_{t},\,\,\,g=dt^2+(\cosh{t})^2 g_{\mathbb{S}^{n}}.
$$
\end{itemize}
\label{theorem: 1.2}
\end{theorem}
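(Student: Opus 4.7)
By Proposition~\ref{prop: 2.1}, $f$ has no critical points in $\Omega$, and $\nabla^2 f = fg$ immediately gives $\nabla(|\nabla f|^2 - f^2) \equiv 0$, so $|\nabla f|^2 - f^2 = C$ is constant on $\Omega$. Decomposing $\nabla f = f_\nu\nu + \nabla^\Sigma (f|_\Sigma)$ on $\Sigma$ and using $f_\nu = cf$ yields the boundary conservation law
\begin{equation*}
(c^2 - 1)(f|_\Sigma)^2 + |\nabla^\Sigma (f|_\Sigma)|^2 = C.
\end{equation*}

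The first main step is to show $f|_\Sigma$ is a nonzero constant on each boundary component. Restricting $\nabla^2 f = fg$ to tangent pairs and applying the Gauss--Weingarten formula produces the tangential Hessian identity $(\nabla^\Sigma)^2(f|_\Sigma) = (f|_\Sigma)\bigl(g|_\Sigma - c\,h\bigr)$. Differentiating the conservation law along $\Sigma$ and substituting this Hessian identity reduces, after simplification, to
\begin{equation*}
(f|_\Sigma)\cdot (c\,g|_\Sigma - h)\bigl(\nabla^\Sigma (f|_\Sigma),\cdot\bigr) = 0.
\end{equation*}
Assumption $(K_2)$ makes $c\,g|_\Sigma - h$ positive definite, so on the open set $\{f|_\Sigma\ne 0\}$ one has $\nabla^\Sigma(f|_\Sigma) = 0$, forcing this set to be a union of connected components of $\Sigma$. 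If $f|_\Sigma \equiv 0$ on some component, then $f$ and $f_\nu$ both vanish there, and integrating $f'' = f$ along the normal geodesics followed by unique continuation gives $f \equiv 0$ globally, contradicting the nonconstancy of $f$. Hence $f|_\Sigma$ is a nonzero constant on each component, and the Hessian identity then collapses to $h = (1/c)\,g|_\Sigma$, i.e.\ every boundary component is totally umbilical with principal curvature $1/c$.

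With $h = (1/c)\,g|_\Sigma$, the Gauss equation gives $K^\Sigma(e_1,e_j) = K^\Omega(e_1,e_j) + 1/c^2$ for any orthonormal pair in $T\Sigma$. Summing over $j = 2,\ldots,n$ and applying $(K_2)$ with $k = n$ yields $\mathrm{Ric}^\Sigma \geq (n-1)(c^2-1)/c^2$ on each component. Bonnet--Myers then bounds the diameter of each component by $\pi\sqrt{c^2/(c^2-1)}$; the assumed lower bound forces equality, and Cheng's maximal diameter rigidity theorem makes each component isometric to $\mathbb{S}^n(\sqrt{c^2/(c^2-1)})$.

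To finish, build the global warped product and count components. The unit field $\xi = \nabla f/|\nabla f|$ satisfies $\nabla_\xi\xi = 0$, and along its unit-speed flow lines $f$ obeys $f'' = f$ with initial data $(f,f') = (-\sinh\theta,\cosh\theta)$ at $\Sigma$, giving $f(t) = \sinh(t)$ with $\Sigma$ at $t = -\theta$. Restricting $\nabla^2 f = fg$ to horizontal directions yields $\phi'/\phi = f/f' = \tanh(t)$ for the warping function, so $\phi(t) = \cosh(t)$. Completeness of $\Omega$ and strict monotonicity of $f$ along $\xi$ show the flow $\Sigma \times I \to \Omega$ is a diffeomorphism, with $I = [-\theta,\infty)$ in the noncompact case or $I = [-\theta,\theta]$ in the compact case (the upper endpoint forced by $f_\nu = cf$ at the far boundary); connectedness of $\Omega$ rules out more than one component at either endpoint of $I$, giving at most two components in total. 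The step I expect to be the main obstacle is the joint extraction of $\nabla^\Sigma(f|_\Sigma) = 0$ and $h = (1/c)\,g|_\Sigma$ from the single curvature-free inequality $h < c\,g|_\Sigma$ together with the boundary conservation law; once this structural result is in hand, the rigidity follows cleanly from Cheng's theorem and a standard warped-product construction.
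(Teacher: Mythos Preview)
Your outline is close to the paper's proof and largely correct, but there is one genuine gap and one methodological difference worth recording.

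\textbf{The gap.} You invoke $(K_2)$ ``with $k=n$'' to sum the sectional curvatures and obtain the Ricci lower bound on $\Sigma$. But $(K_2)$ fixes a single $k\in\{2,\dots,n\}$, and when $k<n$ your direct sum over $j=2,\dots,n$ is not what the hypothesis controls. The paper fills this (Proposition~\ref{prop:4.1}) by a short combinatorial averaging: for each $(k-1)$-element subset $\{i_1,\dots,i_{k-1}\}\subset\{2,\dots,n\}$ one has $\sum_{j} -R^{\Sigma}(e_1,e_{i_j},e_1,e_{i_j})\ge (k-1)(1-1/c^2)$; summing the $\binom{n-1}{k-1}$ such inequalities, each term $-R^{\Sigma}(e_1,e_j,e_1,e_j)$ appears $\binom{n-2}{k-2}$ times, and dividing gives $\mathrm{Ric}^{\Sigma}(e_1,e_1)\ge (n-1)(1-1/c^2)$. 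You need this step.

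\textbf{The difference.} Your route to constancy of $f|_\Sigma$ is more direct than the paper's. You compute $(\nabla^{\Sigma})^2(f|_\Sigma)=(f|_\Sigma)(g|_\Sigma-ch)$, differentiate the conservation law $(c^2-1)(f|_\Sigma)^2+|\nabla^{\Sigma}(f|_\Sigma)|^2=C$, and obtain $(f|_\Sigma)\,(cg|_\Sigma-h)(\nabla^{\Sigma}(f|_\Sigma),\cdot)=0$, which with $h<cg|_S$ forces $\nabla^{\Sigma}(f|_\Sigma)=0$ wherever $f|_\Sigma\ne 0$; the zero set is then ruled out via $C>0$. The paper instead argues by contradiction through its structural Proposition~\ref{theorem: 1.1}: if $f$ were nonconstant on every component then case~(2) applies, $\Sigma$ is connected, and the analysis of $\partial\Omega_0$ produces a principal curvature exactly equal to $c$ along $\{f|_\Sigma=0\}$, contradicting $h<cg|_S$ (Proposition~\ref{prop:4.0.5}). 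Your argument is self-contained; the paper's leverages machinery it already built. One caveat: $(K_2)$ posits $h<cg|_S$ only on \emph{some} component $S$, so your argument directly yields constancy only on $S$; constancy on any second component then follows from the warped-product structure, not from your pointwise identity.

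The remaining steps --- deducing $h=(1/c)g|_\Sigma$ from the Hessian identity once $f|_\Sigma$ is a nonzero constant, applying Bonnet--Myers and Cheng's maximal-diameter theorem, and reconstructing the warped product $\Omega_0\times I$ from the flow of $\nabla f/|\nabla f|$ --- match the paper's Propositions~\ref{prop:4.1}, \ref{prop:4.2} and~\ref{theorem: 1.1}(1).
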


The proofs of Theorems \ref{theorem: 1.3} and \ref{theorem: 1.2} are based on the following
(slightly general) proposition, which provides the warped product structure of $\Omega$ without 
assuming the above curvature conditions.

\begin{proposition}
Let $(\Omega^{n+1}, g)$ be an $(n+1)$-dimensional ($n\geq 2$) smooth complete connected Riemannian manifold with compact boundary $\Sigma$. Assume that $f$ is a non-constant solution of equation (\ref{eqn-1}).
Set $\Omega_0=\{p\in \Omega\,| \,f(p)=0\}$. Then, we have
\begin{itemize}
\item [(1)]
If $f$ is constant on some boundary component, then $\Omega_0$ is a closed manifold. In particular, we also have the following 
\begin{itemize}
\item [(a)]
If $$
\sup\limits_{\Omega} |f|= \infty,
  $$ 
then $\Sigma$ is connected and $\Omega$ is isometirc to the warped product space
$$
\Omega_0 \times [-\theta,\infty)_{t},\,\,\,g=dt^2+(\cosh{t})^2 g_{|{\Omega_0}}.
$$
\item [(b)]
If $$
\sup\limits_{\Omega} |f|< \infty,
  $$ 
then $\Omega$ is isometirc to the warped product space
$$
\Omega_0 \times [-\theta,\theta]_{t},\,\,\,g=dt^2+(\cosh{t})^2 g_{|{\Omega_0}}.
$$
\end{itemize}
\item [(2)]
If $f$ is non-constant on any boundary component, then $\Sigma$ is connected, $\Omega_0$ is compact and $\Omega$ is isometirc to a $Z_2$-symmetric bounded domain in the warped product space
  $$
  \widehat{\Omega}=\Omega_0 \times (-\infty,\infty)_{t}, \,\,g=dt^2 + (\cosh{t})^2 g_{|{\Omega_0}},
  $$
  which is bounded by the graph functions $\pm \phi$, where $\phi \in \, C^{\infty}(\Omega^{\circ}_0) \cap C(\Omega_0)$ satisfies 
  $$
  \frac{\cosh{\phi}}{\sqrt{1+(\cosh{\phi})^{-2}|\nabla_{\Omega_0} \phi|_{g_{|\Omega_0}}^2}}=c\sinh{\phi}, \,\,\,{\text{in}} \,\,\Omega^{\circ}_0,
  $$
  $$
  \phi>0, \,\,\, in \,\,\Omega^{\circ}_0,
  $$
  and
  $$
  \phi=0, \,\,\, on \,\,\partial \Omega_0.
  $$
\end{itemize}
\label{theorem: 1.1}
\end{proposition}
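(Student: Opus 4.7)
The plan is to extract a conservation law from the Hessian equation $\nabla^2 f = fg$, use it to identify a unit geodesic flow on $\Omega$, and convert that flow into a warped-product decomposition centered at $\Omega_0$. Differentiating $h := |\nabla f|^2 - f^2$ and substituting $f_{ij} = fg_{ij}$ gives $\nabla_i h = 2 f^j f_{ij} - 2 f f_i = 0$, so $h \equiv C$ for some constant. By Proposition~2.1, $f$ has no critical points, hence $X := \nabla f/|\nabla f|$ is a smooth unit vector field on $\Omega$. The identity $\nabla_Y \nabla f = f Y$ gives $X(|\nabla f|) = f$ and $\nabla_X X = 0$, so integral curves of $X$ are unit-speed geodesics along which $\phi(t) := f\circ\gamma$ satisfies $\phi'' = \phi$. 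Computing $\nabla_Y X = (f/|\nabla f|)(Y - g(X,Y)X)$ produces $(L_X g)(Y,Z) = 2(f/|\nabla f|)(g(Y,Z) - g(X,Y)g(X,Z))$. Once $C > 0$, the coordinate $t := \sinh^{-1}(f/\sqrt{C})$ satisfies $|\nabla t| = 1$ with $f = \sqrt{C}\sinh t$ and $|\nabla f| = \sqrt{C}\cosh t$; writing $g = dt^2 + g_t$ on a tubular neighborhood gives $\partial_t g_t = 2\tanh(t)\, g_t$, hence $g_t = \cosh^2(t)\, g|_{\Omega_0}$ on any slab where the flow from $\Omega_0$ is a diffeomorphism.

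For Part~(1), if $f \equiv f_0$ on a boundary component $S$, the Robin condition gives $|\nabla f|^2 = c^2 f_0^2$ on $S$, and unique continuation excludes $f_0 = 0$ (otherwise $f = f_\nu = 0$ on $S$ would force $\phi,\phi' = 0$ along every geodesic leaving $S$, hence $f \equiv 0$). After a sign flip, $f_0 > 0$ and $C = (c^2-1) f_0^2 > 0$. An inward geodesic from $p \in S$ satisfies $f(\gamma(t)) = f_0 \sinh(\theta - t)/\sinh\theta$, vanishing precisely at $t = \theta$; the Robin condition excludes any intermediate boundary contact (tangency $X \cdot \nu = 0$ would force $f_\nu = 0$ where $f \neq 0$, while transverse exit would contradict the flow being interior). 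Thus the geodesic map $S \to \Omega_0$ is a diffeomorphism of compact manifolds, $\Omega_0$ is closed, and $\Omega$ carries the warped structure on $[-\theta, 0]$. The opposite side is controlled by completeness of $\Omega$: if $\sup |f| = \infty$ the flow extends for all $t > 0$ with $\Sigma = S$ (case (a)); if $\sup|f| < \infty$, the flow terminates on a second component $S'$ where matching the ODE forces $f \equiv -f_0$ at signed distance $\theta$ (case (b)).

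For Part~(2), on each component $|\nabla^\Sigma f|^2 = C - (c^2-1) f^2 \geq 0$, so non-constancy of $f|_\Sigma$ forces $C > 0$ and $f|_\Sigma$ attains its extrema $\pm\sqrt{C/(c^2-1)}$ exactly at the critical points of $f|_\Sigma$; in particular $f$ changes sign on $\Sigma$ and $\partial\Omega_0 := \Omega_0 \cap \Sigma \neq \emptyset$. The geodesic flow from $\Omega_0$ embeds $\Omega$ isometrically into the ambient warped cylinder $\widehat\Omega := \Omega_0 \times \mathbb{R}_t$ with $\hat g = dt^2 + \cosh^2(t)\, g|_{\Omega_0}$ as the region $\{(p,t) : -\phi_-(p) \leq t \leq \phi_+(p)\}$ with $\phi_\pm \geq 0$ and $\phi_\pm|_{\partial\Omega_0} = 0$. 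Computing the outward unit normal to the graph $t = \phi$ as $\nu = (\partial_t - (\cosh\phi)^{-2}\nabla_{\Omega_0}\phi)/\sqrt{1 + (\cosh\phi)^{-2}|\nabla_{\Omega_0}\phi|^2}$ and imposing $f_\nu = cf$ with $f = \sqrt{C}\sinh t$ yields the stated quasilinear equation for both $\phi_+$ and $\phi_-$. The ambient isometric involution $(p,t)\mapsto(p,-t)$ of $\widehat\Omega$, which sends the extension $\hat f = \sqrt{C}\sinh t$ to $-\hat f$ and preserves the Robin condition, combined with uniqueness for the graph equation with zero Dirichlet data on $\partial\Omega_0$, forces $\phi_+ = \phi_- =: \phi$ and gives the $\mathbb{Z}_2$-symmetric embedding; connectedness of $\Sigma$ then follows because the upper and lower graphs meet along $\partial\Omega_0$.

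The main obstacle is Part~(2): (i) showing that $X$ is transverse to $\Sigma$ away from $\partial\Omega_0$, so that $\Sigma$ is genuinely a graph (which uses the implication $X \cdot \nu = 0 \Rightarrow f_\nu = 0 \Rightarrow f = 0$ from the Robin condition); (ii) proving that the geodesic flow from $\Omega_0$ reaches $\Sigma$ in finite time so that $\Omega_0$ is compact (via compactness of $\Sigma$ combined with the ODE $\phi''=\phi$); and (iii) establishing $\phi_+ = \phi_-$, either through the isometric-involution argument on $\widehat\Omega$ or through direct uniqueness of the Dirichlet problem for the graph equation with zero boundary data.
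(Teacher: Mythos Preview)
Your proposal is correct and follows essentially the same route as the paper: the conservation law $|\nabla f|^2-f^2=\text{const}>0$, the geodesic flow of $\nabla f/|\nabla f|$, the ODE $\phi''=\phi$ along flow lines giving the $\cosh^2 t$ warping, the graph description of $\Sigma_\pm$ over $\Omega_0$ with the stated first-order equation for $\phi$, and uniqueness of that Dirichlet problem to obtain the $\mathbb{Z}_2$-symmetry. The only step the paper makes more explicit than you do is at $\partial\Omega_0$: it computes $h(e_1,e_1)=c>0$ for $e_1=\nabla f/|\nabla f|$ tangent to $\Sigma$, which is what guarantees that the backward flow from any $p$ with $f(p)>0$ lands in $\Omega_0^\circ$ rather than on $\partial\Omega_0$ (your obstacle~(i) handles transversality where $f\neq 0$ but you should add this second-order check at the tangency locus $f=0$).
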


\begin{remark}
The two cases in Proposition \ref{theorem: 1.1} are completely independent; 
in other words, if $f$ is non-constant on any boundary component, 
it won't be a small perturbation of a certain constant, cf. Proposition 2.1.
\end{remark}

From Proposition \ref{theorem: 1.1}, we can see that for the first case, $\Omega_0$ is 
a closed manifold which is conformal to the boundary component(s) and the second fundamental 
form $h$ on $\Sigma$ satisfies $h=\frac{1}{c}g_{|\Sigma}$; while for the second case, $\Omega_0$ is 
a compact manifold with boundary. In general, the (finer) structure of $\Omega_0$ is unclear;
so, in Theorems \ref{theorem: 1.3} and \ref{theorem: 1.2}, we use the curvature assumptions 
discussed above to determine its structure (and then the structure of $\Omega$). 
Here, we also point out that Theorem \ref{theorem: 1.3} corresponds to the second case 
in Proposition \ref{theorem: 1.1} and Theorem \ref{theorem: 1.2} corresponds to the first one.

The idea and method in Theorem \ref{theorem: 1.3} also applies 
to standard sphere $\mathbb{S}^{n+1}$ by using the Obata type equation $\nabla^2 f+ fg=0$.

\begin{theorem}
Let $(\Omega^{n+1},g)$ be an $(n+1)$-dimensional ($n\geq 2$) smooth complete connected Riemannian manifold with compact boundary $\Sigma$. Assume that there exist a non-constant function $f \in \, C^{\infty}(\Omega)$ and a constant $c>0$ such that
\begin{equation}\label{eqn-2}
\begin{cases}
\begin{aligned}
&\nabla^2 f+ fg=0,& {\text{in}}\,\Omega,\\
&f_{\nu}-cf=0,&{\text{on}} \, \Sigma.
\end{aligned}
\end{cases}
\end{equation}
If the Ricci curvature of $\Omega$ satisfies Ric$^{\Omega}\geq n$ and the mean curvature of $\Sigma$ is bounded from below by $c$, then $\Omega$ is isometric to a geodesic ball
of radius $\tan^{-1} {(\frac{1}{c})}$ in the standard sphere $\mathbb{S}^{n+1}$.
\label{theorem: 1.4}
\end{theorem}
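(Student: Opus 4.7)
The plan is to carry out the analog of the Reilly-type argument used for Theorem~\ref{theorem: 1.3}, exploiting two strong pointwise identities that the Obata equation $\nabla^2 f + fg = 0$ produces for free, and then closing the equality cases using $\mathrm{Ric}^\Omega \geq n$ and $H \geq c$.

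\emph{Step 1 (setup and pointwise identities).} Since $\mathrm{Ric}^\Omega \geq n > 0$ and $\Sigma$ is compact, a Bonnet--Myers argument for manifolds with boundary forces $\Omega$ to be compact. Differentiating $\nabla^2 f + fg = 0$ along $\nabla f$ gives $\nabla(|\nabla f|^2 + f^2) = 0$, so $|\nabla f|^2 + f^2 \equiv C$ on $\Omega$. Combining Bochner's formula with $\nabla^2 f = -fg$, $\Delta f = -(n+1)f$, and $|\nabla^2 f|^2 = (n+1)f^2$, and using $\tfrac{1}{2}\Delta|\nabla f|^2 = (n+1)f^2 - |\nabla f|^2$ (which follows from $|\nabla f|^2 = C - f^2$), yields the \emph{pointwise} identity
\[
\mathrm{Ric}(\nabla f, \nabla f) = n\,|\nabla f|^2
\]
on $\Omega$, using only the PDE. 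On $\Sigma$, writing $\Delta f = \bar\Delta f + Hf_\nu + \nabla^2 f(\nu,\nu)$ with $\nabla^2 f(\nu,\nu) = -f$ and $f_\nu = cf$ gives the boundary identity $\bar\Delta f = -(n+cH)f$.

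\emph{Step 2 (Reilly's formula).} Plugging these identities into Reilly's formula
\[
\int_\Omega\!\bigl[(\Delta f)^2 - |\nabla^2 f|^2 - \mathrm{Ric}(\nabla f,\nabla f)\bigr]\,dv = \int_\Sigma\!\bigl[2 f_\nu\bar\Delta f + H f_\nu^2 + h(\bar\nabla f,\bar\nabla f)\bigr]\,dA,
\]
the left-hand side reduces via the pointwise Ricci identity and $\int_\Omega|\nabla f|^2 = (n+1)\int_\Omega f^2 + c\int_\Sigma f^2$ to $-nc\int_\Sigma f^2$. The right-hand side, after using $\bar\Delta f = -(n+cH)f$ and integration by parts on the closed manifold $\Sigma$ (which yields $\int_\Sigma|\bar\nabla f|^2 = \int_\Sigma(n+cH)f^2$), reduces to $-2nc\int_\Sigma f^2 - c^2\int_\Sigma Hf^2 + \int_\Sigma h(\bar\nabla f,\bar\nabla f)$. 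Equating produces the clean integral identity
\[
\int_\Sigma \bigl(h - c\,g_\Sigma\bigr)(\bar\nabla f,\bar\nabla f)\,dA = 0.
\]

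\emph{Step 3 (rigidity on $\Sigma$).} The hypotheses $\mathrm{Ric}^\Omega \geq n$ and $H \geq c$ enter now. Restricting $\nabla^2 f + fg = 0$ to $T\Sigma \times T\Sigma$ via the Gauss formula for the Hessian gives the tangential Obata-type equation
\[
\bar\nabla^2(f|_\Sigma) + f\bigl(g_\Sigma + c\,h\bigr) = 0,
\]
whose trace recovers $\bar\Delta f = -(n+cH)f$. Applying Bochner on $\Sigma$ to $f|_\Sigma$ and controlling $\mathrm{Ric}^\Sigma$ via the Gauss equation with $\mathrm{Ric}^\Omega \geq n$, together with the Cauchy--Schwarz inequality $|h|^2 \geq H^2/n$ and the hypothesis $H \geq c$, should force pointwise umbilicity $h \equiv c\,g_\Sigma$ on $\Sigma$. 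The tangential equation then collapses to the classical Obata equation $\bar\nabla^2(f|_\Sigma) = -(1+c^2)f\,g_\Sigma$ on the closed manifold $\Sigma$, so by Obata's theorem $(\Sigma, g_\Sigma)$ is isometric to $\mathbb{S}^n\!\bigl(1/\sqrt{1+c^2}\bigr)$.

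\emph{Step 4 (identification of $\Omega$).} Combining the warped-product structure of $\Omega$ obtained by integrating $\nabla f/|\nabla f|$ (analogous to Proposition~\ref{theorem: 1.1}, with warping function $\cos t$) with the recognition of $\Sigma$ as $\mathbb{S}^n\!\bigl(1/\sqrt{1+c^2}\bigr)$ of constant principal curvature $c = \cot R$ where $R = \tan^{-1}(1/c)$, and with the pointwise identity $\mathrm{Ric}(\nabla f,\nabla f) = n|\nabla f|^2$ plus $\mathrm{Ric}^\Omega \geq n$, forces $(\Omega, g)$ to have constant sectional curvature $+1$ and to be isometric to the geodesic ball of radius $\tan^{-1}(1/c)$ in $\mathbb{S}^{n+1}$. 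The main obstacle I anticipate is Step~3: promoting the integrated equality $\int_\Sigma(h - cg_\Sigma)(\bar\nabla f,\bar\nabla f)\,dA = 0$ to pointwise umbilicity $h = cg_\Sigma$ requires a careful interplay of Bochner on $\Sigma$, the Gauss equation, and the bound $H \geq c$, and is where the bookkeeping of curvature terms is most delicate.
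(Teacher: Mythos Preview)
Your Steps 1 and 2 are correct, but notice that they never use the hypotheses $\mathrm{Ric}^\Omega \ge n$ or $H \ge c$: the pointwise equality $\mathrm{Ric}(\nabla f,\nabla f)=n|\nabla f|^2$ is forced by the Obata equation alone, so Reilly's formula collapses to an \emph{identity}, not an inequality, and $\int_\Sigma (h-cg_\Sigma)(\bar\nabla f,\bar\nabla f)\,dA=0$ holds for every solution of \eqref{eqn-2} on any compact $\Omega$. All of the rigidity must therefore come from Step~3, and there the plan has a genuine gap. To bound $\mathrm{Ric}^\Sigma(\bar\nabla f,\bar\nabla f)$ via the Gauss equation you need, in addition to $\mathrm{Ric}^\Omega(\bar\nabla f,\bar\nabla f)$, the sectional-type term $R^\Omega(\bar\nabla f,\nu,\bar\nabla f,\nu)$; a Ricci lower bound on $\Omega$ gives no control on individual curvatures in the $\nu$ direction. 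The Obata equation does pin down sectional curvatures, but only for planes containing $\nabla f$, and on $\Sigma$ the normal $\nu$ coincides with $\nabla f/|\nabla f|$ only at critical points of $f|_\Sigma$. So Bochner on $\Sigma$ combined with Gauss cannot close under the stated hypotheses; Step~4 is likewise incomplete, since $\mathrm{Ric}(\nabla f,\nabla f)=n|\nabla f|^2$ together with $\mathrm{Ric}^\Omega\ge n$ does not force constant sectional curvature.

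The paper takes a quite different route, working on the level set $\Omega_0=\{f=0\}$ rather than on $\Sigma$. By the warped-product description (from \cite{chen2020obata}) one has $\widehat\Omega=\Omega_0\times(-\tfrac{\pi}{2},\tfrac{\pi}{2})_t$ with $g=dt^2+\cos^2 t\,g|_{\Omega_0}$, and $\Omega_0$ is totally geodesic with unit normal $\partial_t=\nabla f/|\nabla f|$. This is the crucial point: since the normal to $\Omega_0$ is $\nabla f$, the Ricci identity for third derivatives of $f$ gives $R^\Omega(\partial_t,e_j,\partial_t,e_j)$ exactly, and then Gauss transfers $\mathrm{Ric}^\Omega\ge n$ cleanly to $\mathrm{Ric}^{\Omega_0}\ge n-1$; similarly $H\ge c$ on $\Sigma$ yields $H_{\partial\Omega_0}\ge c$. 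One then shows (using $H\ge c$ again, via Lemma~\ref{lemma:4.3}) that the maximum set of the graph function $\phi$ is a single point, and checks that $v=\tan^{-1}\bigl(\tfrac{1}{c}\sqrt{1-(c^2+1)\sin^2\phi}\bigr)$ satisfies $|\nabla_{\Omega_0}v|\equiv 1$, so the inradius of $\Omega_0$ is at least $\tan^{-1}(1/c)$. The Li--Wei rigidity theorem then forces $\Omega_0$ to be a geodesic ball of that radius in $\mathbb{S}^n$, after which the explicit form of $\phi$ identifies $\Sigma$ as a geodesic sphere in $\mathbb{S}^{n+1}$. The moral: run the Gauss-equation transfer on $\Omega_0$, where the Obata equation supplies exactly the missing normal-curvature term, rather than on $\Sigma$, where it does not.
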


We also point out that Theorem \ref{theorem: 1.4} is an extension of Corollary 4.4 in \cite{Liu}.

The paper is organized as follows. Section 2 gives some necessary preliminaries, including some basic definitions and some known results which are needed later. Section 3 concentrates on the Obata type equation (\ref{eqn-1}) and the proof of Proposition \ref{theorem: 1.1}. In Section 4, we prove Theorems \ref{theorem: 1.3}, \ref{theorem: 1.2} and \ref{theorem: 1.4} and give some corollaries.

\section{Preliminaries}

This section mainly introduces some basic definitions and some known results 
which are needed in the later proofs.

Let $(\Omega^{n+1},g)$ be an $(n + 1)$-dimensional smooth complete connected Riemannian manifold with compact boundary $\partial \Omega=\Sigma$ 
and $g|_{\Sigma}$ the restricted metric on $\Sigma$; 
denote by $\langle \cdot, \cdot \rangle$ 
the inner product on $\Omega$ as well as $\Sigma$. Denote by
$\nabla^{\Omega}$, $\nabla$, $\Delta$, and $\nabla^2$ the connection, 
the gradient, the Laplacian, and the Hessian on $\Omega$ respectively, while by $\nabla_{\Sigma}$ and $\Delta_{\Sigma}$ the gradient 
and the Laplacian on $\Sigma$ respectively.
Let $\nu$ be the unit outward normal of $\Sigma$; denote by $h$, $A^{\nu}$, and $H$ the second fundamental form, the Weingarten transformation, and the mean curvature of $\Sigma$ with respect 
to $\nu$ respectively, here 
$$
h(X, Y)= -\langle \nabla^{\Omega}_X Y, \nu \rangle, 
~~\langle A^{\nu}(X), Y\rangle=h(X, Y),
$$ 
and 
$$
H={\frac{\mbox{tr}_g h}{n}}.
$$
The principal curvatures of $\Sigma$ are defined to be the
eigenvalues of $h$ and $A^{\nu}$. 
Let $R^{\Omega}$ be the curvature tensor of $\Omega$, i.e., 
for tangent vectors $X, Y, Z, W$,
$$
R^{\Omega}(X,Y,Z,W)=\langle \nabla^{\Omega}_X \nabla^{\Omega}_Y Z- \nabla^{\Omega}_Y \nabla^{\Omega}_X Z-\nabla^{\Omega}_{[X,Y]} Z, W\rangle;
$$
$\mbox{Ric}^{\Omega}$ be the Ricci curvature tensor of $\Omega$. 
Let $dV$ and $dv$ be the canonical volume element of $\Omega$ and $\Sigma$ respectively.

Let $f \in \, C^{\infty}(\Omega)$ satisfy the equation (\ref{eqn-1}),
which is non-constant and 
$$
\Omega_{a}=\{x\in \Omega~|~f(x)=a\}.
$$

Now, we give some basic facts related to the Obata type equations. 

\begin{proposition}
   Let $f \in  C^{\infty}(\Omega)$ satisfy the equation (\ref{eqn-1}) 
   which is non-constant, then there exists a constant $A>0$ such that 
   $$
   |\nabla f|^2-f^2=A.
   $$
   \label{prop: 2.1}
\end{proposition}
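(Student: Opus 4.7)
The plan is to show that $u := |\nabla f|^2 - f^2$ is a constant by differentiating it and using the Obata-type equation, and then to rule out $A \le 0$ using the Robin boundary condition together with the hypothesis $c > 1$.

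First, I would compute $\nabla_i u = 2 f^j \nabla_i \nabla_j f - 2 f f_i$. Plugging in the equation $\nabla_i \nabla_j f = f\, g_{ij}$ gives $2 f^j f g_{ij} - 2 f f_i = 2 f f_i - 2 f f_i = 0$. Since $\Omega$ is connected, this forces $u \equiv A$ for some constant $A \in \mathbb{R}$.

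Next I would rule out $A<0$. If $A<0$, the identity $|\nabla f|^2 = f^2 + A$ gives $f^2 \ge -A > 0$ everywhere, so $f$ never vanishes. Decomposing the gradient at a boundary point along $\nu$ and the tangential directions yields $|\nabla f|^2 \ge f_\nu^2 = c^2 f^2$ on $\Sigma$; combined with $|\nabla f|^2 = f^2 + A$, this gives $(c^2-1)f^2 \le A$. Since $c>1$ and $f^2>0$, the left-hand side is strictly positive, contradicting $A<0$.

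The more delicate case is $A=0$. Then $|\nabla f|^2 = f^2$, and the same boundary decomposition forces $c^2 f^2 \le f^2$ on $\Sigma$; as $c>1$ this gives $f \equiv 0$ on $\Sigma$. Hence $f_\nu = cf = 0$ and, using $|\nabla f|^2 = f^2 = 0$ on $\Sigma$, the full gradient $\nabla f$ vanishes on $\Sigma$. Now for any inward geodesic $\gamma$ from $p \in \Sigma$, the function $\phi(t) = f(\gamma(t))$ satisfies the ODE $\phi'' = \phi$ (by restricting $\nabla^2 f = fg$ to $\gamma'$) with $\phi(0) = \phi'(0) = 0$, so $\phi \equiv 0$. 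Consequently $f$ vanishes in a one-sided neighborhood of $\Sigma$; the set $\{q \in \Omega^\circ : f(q) = 0,\, \nabla f(q) = 0\}$ is open (same ODE argument applied to geodesics emanating from any such $q$) and closed by continuity, hence all of $\Omega^\circ$, contradicting the non-constancy of $f$. Therefore $A>0$, as desired. I do not expect any serious obstacle beyond bookkeeping; the only subtlety is the $A=0$ case, where one must upgrade $f|_\Sigma = 0$ to $\nabla f|_\Sigma = 0$ before invoking the geodesic ODE argument.
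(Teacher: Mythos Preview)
Your proof is correct and follows the same approach as the paper: differentiate $|\nabla f|^2 - f^2$ and use the Hessian equation to see it is constant, then use the boundary decomposition $|\nabla f|^2 = |\nabla_\Sigma f|^2 + c^2 f^2$ together with $c>1$ to force $A>0$. The paper compresses the positivity step into a single sentence, whereas you carefully separate and justify the cases $A<0$ and $A=0$; in particular your geodesic ODE / open--closed argument for $A=0$ makes explicit what the paper leaves to the reader.
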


\begin{proof}
A direct calculation shows that, for any tangent vector (field) $X$,
$$
X(|\nabla f|^2-f^2)=2\nabla ^2 f (X,\nabla f)-2f\langle X,\nabla f \rangle=0.
$$
Therefore, $|\nabla f|^2-f^2$ is constant. Since $f$ is non-constant, $f_{\nu}=cf$, and $c>1$, we have $|\nabla f|^2-f^2>0$. 
Then the conclusion follows. 
\end{proof}

Proposition \ref{prop: 2.1} clearly implies that $f$ has no critical points
in $\Omega$. Without lose of generality, we always assume that $A=1$ 
in the following. Here, we also record the following fact, for its proof
one can refer to \cite{chen2020obata}.

\begin{proposition}
   Let $f \in  C^{\infty}(\Omega)$ be a non-constant function which satisfies equation (\ref{eqn-1}), then the
integral curves of $\frac{\nabla f}{|\nabla f|}$ are geodesics.
   \label{prop: 2.2}
\end{proposition}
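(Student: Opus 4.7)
The plan is to verify directly that the unit vector field $T:=\nabla f/|\nabla f|$ satisfies $\nabla^{\Omega}_T T = 0$, which is the defining property of a geodesic vector field. The very first thing I would note is that Proposition~\ref{prop: 2.1} guarantees $|\nabla f|^2 - f^2 = A > 0$ everywhere (and in fact, with the normalization $A=1$), so $|\nabla f|$ never vanishes; hence $T$ is a well-defined smooth unit vector field on all of $\Omega$.

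Next, I would rewrite the Hessian equation $\nabla^2 f - fg = 0$ in its $(1,1)$-form. Since $\langle \nabla^{\Omega}_X \nabla f, Y \rangle = \nabla^2 f(X,Y) = f\langle X,Y\rangle$ for all tangent vectors $X,Y$, we obtain the identity
$$
\nabla^{\Omega}_X \nabla f = f X \quad \text{for every tangent vector } X.
$$
This is the algebraic engine of the proof.

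Then I would expand $\nabla^{\Omega}_T T$ via the Leibniz rule:
$$
\nabla^{\Omega}_T T \;=\; \frac{1}{|\nabla f|}\,\nabla^{\Omega}_T \nabla f \;+\; T\!\left(\frac{1}{|\nabla f|}\right)\nabla f.
$$
Applying the identity above with $X=T$, the first term equals $fT/|\nabla f| = f\,\nabla f/|\nabla f|^2$. For the second, differentiating the relation $|\nabla f|^2 = f^2 + A$ along $T$ gives $T(|\nabla f|) = f$, whence $T(1/|\nabla f|) = -f/|\nabla f|^2$. The two contributions cancel exactly, so $\nabla^{\Omega}_T T = 0$, and the integral curves of $T$ are unit-speed geodesics.

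There is no real obstacle here: the computation is short and self-contained. The only conceptual point worth flagging is that the argument genuinely relies on the absence of critical points of $f$ (equivalently, $A>0$) to make sense of the normalization $T=\nabla f/|\nabla f|$ globally; this is precisely what Proposition~\ref{prop: 2.1} supplies in the present Robin setting with $c>1$.
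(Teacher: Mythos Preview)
Your proof is correct: the computation $\nabla^{\Omega}_T T = 0$ via the Leibniz rule and the identity $\nabla^{\Omega}_X \nabla f = fX$ is clean and complete, and your invocation of Proposition~\ref{prop: 2.1} to guarantee $|\nabla f|\neq 0$ is exactly the point that makes $T$ globally defined. The paper itself does not give a proof of this proposition but simply refers the reader to \cite{chen2020obata}; your argument is the standard direct verification and is precisely what one would expect such a reference to contain.
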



For later convenience, we here introduce the warped product structure 
of the space forms and the corresponding equations of geodesic spheres. These facts will be used in the proofs of Theorems \ref{theorem: 1.3} 
and \ref{theorem: 1.4}. For sake of clarity, we first see
the Euclidean case.

Let $\mathbb{R}^{n+1}=\{(x^1,x^2,\cdots,x^{n+1})~|~x^{i}\in\mathbb{R}\}$
and $g_{\mathbb{R}^{n+1}}= (dx^1)^2 + (dx^2)^2+\cdots +(dx^{n+1})^2$,
and identify $\mathbb{R}^{n}$ with 
$\{(x^1, x^2,\cdots, x^{n+1})\in \mathbb{R}^{n+1}~|~ x^1=0\}.$ 
Obviously, the Euclidean space $\mathbb{R}^{n+1}$ is isometric to 
the warped product space
$$
\mathbb{R}^{n}\times (-\infty,\infty)_t,\,\,\,g=dt^2+ g_{\mathbb{R}^{n}}.
$$
Let $p \in \mathbb{R}^{n} \subset \mathbb{R}^{n+1}$ and $B_{p}^{n+1}(r)$ be the (closed) ball of radius $r$ centered at $p$ in $\mathbb{R}^{n+1}$. Then, $B_{p}^{n}(r)=B_{p}^{n+1}(r)\cap \mathbb{R}^{n}$ is 
the (closed) ball of radius $r$ centered at $p$ in $\mathbb{R}^n$.  
Now, we consider $B_{p}^{n+1}(r)$ as a bounded domain 
in the above warped product space 
$\mathbb{R}^{n} \times (-\infty,\infty)_t$. A direct calculation 
shows that there exists a unique non-negative function 
$\phi \in C^{\infty}((B_{p}^{n}(r))^{\circ})\cap C(B_{p}^{n}(r))$ 
such that
$$
\partial B_{p}^{n+1}(r) = \{(x,\pm\phi(x))~|~x \in B_{p}^{n}(r)\}.
$$
In particular, $\phi$ satisfies the following equation
$$\phi^2(x)+d^2(x,p)=r^2,$$
where $d$ is the distance function in $\mathbb{R}^{n}$.

Now, we consider the hyperbolic space $\mathbb{H}^{n+1}$. 
We use the model of the upper half-space, i.e. 
$$
\mathbb{H}^{n+1}=\{(x^1,x^2,\cdots,\\x^{n+1})~|~x^{n+1} >0\}
$$ 
and 
$$
g_{\mathbb{H}^{n+1}}= \frac{(dx^1)^2 + (dx^2)^2+\cdots +(dx^{n+1})^2}{(x^{n+1})^2};
$$ 
identify $\mathbb{H}^{n}$ with 
$\{(x^1, x^2, \cdots, x^{n+1})\in\mathbb{H}^{n+1}~|~x^1=0\}$. 
For any point $p=(0, x^2, \cdots, x^{n+1})\in \mathbb{H}^{n}$, 
a direct calculation shows that 
$$
\gamma_p(t)=(\frac{\sinh{t}}{\cosh{t}}x^{n+1}, x^2, \cdots, x^n, \frac{x^{n+1}}{\cosh{t}})
$$ 
is the (arc length parameter) minimizing geodesic satisfying
$\gamma_{p}(0)=p$ and $\gamma_{p}^{\prime}(0) \perp T_{p} \mathbb{H}^{n}$. 
Then, one has the following smooth map
$$
\Psi:\mathbb{H}^{n} \times (-\infty,\infty)_t \rightarrow \mathbb{H}^{n+1}
$$
by setting $\Psi(p,t)=\gamma_p(t)$. It is easy to see 
that $\Psi$ is a diffeomorphism and 
the pull-back metric $\Psi^*g_{\mathbb{H}^{n+1}}$
on $\mathbb{H}^{n} \times (-\infty,\infty)$ can be written as
$$
g=dt^2 + (\cosh{t})^2g_{\mathbb{H}^{n}}.
$$
Thus, the hyperbolic space $\mathbb{H}^{n+1}$ 
is actually isometric to the following warped product 
space
$$
\mathbb{H}^{n} \times (-\infty,\infty)_t,\,\,\,g=dt^2 + (\cosh{t})^2g_{\mathbb{H}^{n}};
$$
this is the warped product structure of $\mathbb{H}^{n+1}$.

Let $p \in \mathbb{H}^{n} \subset \mathbb{H}^{n+1}$ and 
$B_{p}^{n+1}(r)$ be the closed geodesic ball of radius $r$ 
centered at $p$ in $\mathbb{H}^{n+1}$. 
Let $B_{p}^{n}(r)=B_{p}^{n+1}(r)\cap \mathbb{H}^{n}$; obviously $B_{p}^{n}(r)$ is the closed geodesic ball 
of radius $r$ centered at $p$ in $\mathbb{H}^{n}$. 
By the above discussion, we can also consider $B_{p}^{n+1}(r)$ 
as a bound domain in the product space 
$\mathbb{H}^{n} \times (-\infty,\infty)_t$.
Set 
$$
\partial B_{p+}^{n+1}(r) = \partial B_{p}^{n+1}(r) \cap (\mathbb{H}^{n} \times [0,\infty)_t).
$$
Then, it is easy to see that there exists a non-negative function
$\phi \in C^{\infty}((B_{p}^{n}(r))^{\circ})\cap C(B_{p}^{n}(r))$
so that $\partial B_{p+}^{n+1}(r)$ can be written as 
$$
\{(x,\phi(x))\in \mathbb{H}^{n} \times [0,\infty)_t~|~
x \in B_{p}^{n}(r)\}.
$$
For a point $x\in B_{p}^{n}(r)$ together with the points $p$ and 
$(x,\phi(x))$, one has the corresponding geodesic triangle 
in $\mathbb{H}^{n+1}$; by the law of cosine in $\mathbb{H}^{n+1}$, 
we have
$$
\cosh{r}=\cosh{\phi(x)}\cosh{d(x,p)},
$$
that is,
$$
\frac{\tanh^2{d(x,p)}}{\tanh^2{r
}}+\frac{\sinh^2{\phi(x)}}{\sinh^2{r}}=1,
$$
where $d$ is the distance function in $\mathbb{H}^{n}$.
Similarly, one can consider the lower half part
$$
\partial B_{p-}^{n+1}(r) = \partial B_{p}^{n+1}(r) \cap (\mathbb{H}^{n} \times (-\infty,0]_t).
$$ 
Thus, we have the following
\begin{proposition}
There exists some 
$\phi \in C^{\infty}((B_{p}^{n}(r))^{\circ})\cap C(B_{p}^{n}(r))$ 
which is non-negative and satisfies
$$
\frac{\tanh^2{d(x,p)}}{\tanh^2{r}}+
\frac{\sinh^2{\phi(x)}}{\sinh^2{r}}=1
$$
(where $d$ is the distance function in $\mathbb{H}^{n}$), so that
the geodesic sphere $\partial B_{p}^{n+1}(r)$ can be written as
$$
\{(x,\pm\phi(x))~|~x \in B_{p}^{n}(r)\}
$$
in the product space $\mathbb{H}^{n} \times (-\infty,\infty)_t$.
\label{prop:2.3}
\end{proposition}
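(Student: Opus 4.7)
The plan is to read the graph structure of $\partial B_p^{n+1}(r)$ directly off the warped product identification just established, using only the hyperbolic Pythagorean theorem. First I would fix the diffeomorphism $\Psi\colon \mathbb{H}^{n}\times(-\infty,\infty)_t\to \mathbb{H}^{n+1}$ constructed above and observe that for every $x\in\mathbb{H}^n$ the vertical curve $t\mapsto\Psi(x,t)=\gamma_x(t)$ is a unit-speed geodesic of $\mathbb{H}^{n+1}$ meeting $\mathbb{H}^n$ orthogonally at $x$. Consequently, for any $(x,t)$ the geodesic triangle in $\mathbb{H}^{n+1}$ with vertices $p$, $x$, $(x,t)$ has a right angle at $x$, since $\gamma_x'(0)\perp T_x\mathbb{H}^n$ and the side $\overline{px}$ lies in $\mathbb{H}^n$.

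Next I would apply the hyperbolic Pythagorean theorem on that right triangle to obtain
$$
\cosh d_{\mathbb{H}^{n+1}}\bigl(p,(x,t)\bigr)=\cosh d(x,p)\,\cosh|t|,
$$
where $d$ denotes distance in $\mathbb{H}^n$. Hence $(x,t)\in\partial B_p^{n+1}(r)$ if and only if $\cosh d(x,p)\cosh|t|=\cosh r$. For $x$ with $d(x,p)\le r$ this has exactly the two solutions $t=\pm\phi(x)$, where
$$
\phi(x)=\cosh^{-1}\!\bigl(\cosh r/\cosh d(x,p)\bigr)\ge 0,
$$
and the identities $\cosh^{2}-\sinh^{2}=1$ and $\tanh^{2}=1-\cosh^{-2}$ rewrite the defining equation into the claimed implicit form $\tanh^{2}d(x,p)/\tanh^{2}r+\sinh^{2}\phi(x)/\sinh^{2}r=1$.

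To close the argument I would verify exhaustion and regularity. For exhaustion, any $q\in\partial B_p^{n+1}(r)$ has the form $q=(x,t)$ via $\Psi$; the Pythagorean identity then forces $\cosh d(x,p)\le\cosh r$, so $x\in B_p^{n}(r)$, and the graph $\{(x,\pm\phi(x))\}$ therefore exhausts the sphere. For regularity, $d(\cdot,p)$ is smooth on $\mathbb{H}^n\setminus\{p\}$ (the cut locus of a point in $\mathbb{H}^n$ is empty), and on the interior of $B_p^{n}(r)$ the ratio $\cosh r/\cosh d(\cdot,p)$ stays strictly greater than $1$, so the composition with the smooth branch of $\cosh^{-1}$ gives $\phi\in C^{\infty}((B_p^{n}(r))^{\circ})$; continuity up to $\partial B_p^{n}(r)$ is immediate since $\cosh^{-1}$ is continuous at $1$. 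The only mildly delicate bookkeeping is to match the $\pm$ branches of the graph with the upper and lower hemispheres $\partial B_{p,\pm}^{n+1}(r)$, but this follows at once from the reflection isometry $(x,t)\mapsto(x,-t)$ of the warped product, so there is no serious obstacle anywhere in the argument.
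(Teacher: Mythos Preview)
Your proposal is correct and follows essentially the same route as the paper: the paper forms the geodesic triangle with vertices $p$, $x$, $(x,\phi(x))$ and applies the hyperbolic law of cosines (which, at the right angle at $x$, is exactly your hyperbolic Pythagorean theorem) to obtain $\cosh r=\cosh\phi(x)\cosh d(x,p)$ and then rewrites this algebraically. Your write-up is in fact more careful than the paper's about exhaustion and regularity; the only cosmetic point is that when arguing smoothness at the center $p$ you should invoke the smoothness of $\cosh d(\cdot,p)$ rather than of $d(\cdot,p)$ itself.
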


By a similar argument, the standard sphere $\mathbb{S}^{n+1}$ can be 
isometrically written as the warped product space
$$
\mathbb{S}^{n} \times (-\frac{\pi}{2},\frac{\pi}{2})_t,\,\,\,g=dt^2 + (\cos{t})^2g_{\mathbb{S}^{n}}.
$$
Let $p\in\mathbb{S}^{n}\subset\mathbb{S}^{n+1}$ and $B_{p}^{n+1}(r)$ 
be the closed geodesic ball of radius $r<\frac{\pi}{2}$ centered at $p$ in $\mathbb{S}^{n+1}$. Let $B_{p}^{n}(r)=B_{p}^{n+1}(r)\cap \mathbb{S}^{n}$. We then have the following

\begin{proposition}
For $r<\frac{\pi}{2}$, there exist some 
$\phi \in C^{\infty}((B_{p}^{n}(r))^{\circ})\cap C(B_{p}^{n}(r))$ 
which is non-negative and satisfies
$$
\frac{\tan^2{d(x,p)}}{\tan^2{r
}}+\frac{\sin^2{\phi(x)}}{\sin^2{r}}=1
$$
(where $d$ is the distance function in $\mathbb{S}^{n}$), 
so that the geodesic sphere $\partial B_{p}^{n+1}(r)$ can be written as 
$$
\{(x,\pm\phi(x))~|~x\in B_{p}^{n}(r)\}
$$
in the product space 
$\mathbb{S}^{n} \times (-\frac{\pi}{2},\frac{\pi}{2})_t$.
\label{prop:2.4}
\end{proposition}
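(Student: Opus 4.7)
The plan is to mirror the derivation of Proposition~\ref{prop:2.3} in the spherical setting, replacing hyperbolic functions by their circular analogs. The warped product representation $\mathbb{S}^{n+1}\cong\mathbb{S}^n\times(-\pi/2,\pi/2)_t$ with metric $dt^2+\cos^2(t)\,g_{\mathbb{S}^n}$ is already in place, so I would regard the geodesic ball $B_p^{n+1}(r)$ as a bounded domain in this product. The warped metric is invariant under the isometry $(x,t)\mapsto(x,-t)$, which fixes $p\in\mathbb{S}^n$ pointwise and hence preserves $B_p^{n+1}(r)$; this reduces the problem to describing the upper half $\partial B_{p+}^{n+1}(r):=\partial B_p^{n+1}(r)\cap(\mathbb{S}^n\times[0,\pi/2)_t)$ as a graph $t=\phi(x)$ over $B_p^n(r)$.

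Next I would verify the graph property and extract the defining equation simultaneously. The integral curves of $\partial_t$ are the great circles orthogonal to $\mathbb{S}^n$, and along such a curve $\gamma_x(t)$ starting at $x\in\mathbb{S}^n$ the ambient embedding $\mathbb{S}^{n+1}\subset\mathbb{R}^{n+2}$ gives $\cos d_{\mathbb{S}^{n+1}}(\gamma_x(t),p)=\cos t\,\cos d(x,p)$. Since $d(x,p)\leq r<\pi/2$, the right-hand side is even in $t$ and strictly decreasing on $[0,\pi/2)$, so each fiber meets $B_p^{n+1}(r)$ in a single symmetric interval $[-\phi(x),\phi(x)]$, where $\phi(x)$ is unambiguously determined by
\[
\cos r=\cos\phi(x)\,\cos d(x,p).
\]
This yields $\phi\in C(B_p^n(r))$ with $\phi>0$ on the interior and $\phi=0$ on $\partial B_p^n(r)$; smoothness on the interior follows from the implicit function theorem applied to this relation. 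The same identity arises from the spherical law of cosines applied to the right-angled geodesic triangle in $\mathbb{S}^{n+1}$ with vertices $p$, $x$, and $(x,\phi(x))$, whose legs along $\mathbb{S}^n$ and normal to $\mathbb{S}^n$ meet at a right angle at $x$. A routine manipulation---squaring, replacing $\cos^2$ by $1-\sin^2$, and collecting terms---rewrites $\cos r=\cos\phi(x)\cos d(x,p)$ as $\frac{\tan^2 d(x,p)}{\tan^2 r}+\frac{\sin^2\phi(x)}{\sin^2 r}=1$, which is the form stated in the proposition.

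The main point requiring care, and the reason the hypothesis $r<\pi/2$ is essential, is the graph step: one must rule out that a normal great circle through $x$ exits and re-enters the ball, which would force $\phi$ to be multivalued. The distance along these fibers is monotone on $[0,\pi/2)$ but not beyond, and once $r$ reaches $\pi/2$ the ball begins to wrap into the antipodal hemisphere and the warped product chart no longer covers it. Under $r<\pi/2$ this obstruction disappears, and the remainder of the argument is directly parallel to the hyperbolic case treated in Proposition~\ref{prop:2.3}.
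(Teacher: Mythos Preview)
Your proposal is correct and follows essentially the same route as the paper: the paper does not give a separate proof of Proposition~\ref{prop:2.4} but simply says ``by a similar argument'' to the hyperbolic case, and your argument is exactly that analog---use the warped product, write the upper half of the geodesic sphere as a graph, apply the spherical law of cosines to obtain $\cos r=\cos\phi(x)\cos d(x,p)$, and rewrite. Your added discussion of the $Z_2$ symmetry, the monotonicity along fibers, and the role of the hypothesis $r<\pi/2$ in guaranteeing the graph property are welcome clarifications that the paper leaves implicit.
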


\section{The warped product structure of $\Omega$ and 
proof of Proposition \ref{theorem: 1.1}}

Now, let us concentrate on Proposition \ref{theorem: 1.1}; the method of 
the proof is similar to that of Theorem 1.3 in \cite{chen2020obata}.
We first consider the case that $f$ 
is constant on some boundary component. 

Since $|\nabla f|^2-f^2=1$ and $f_\nu =cf$, we have 
    \begin{equation*}
        |\nabla_{\Sigma} f|^2+(c^2-1)f^2=1,\,\,\mbox{on} \,\,\Sigma.
    \end{equation*}
We then conclude that for any $p \in \Sigma$,
$$-\sinh{\theta}=\frac{-1}{\sqrt{c^2-1}}\leq f(p) \leq \frac{1}{\sqrt{c^2-1}}=\sinh{\theta}.$$
Let $S \subset \Sigma$ be the boundary component such that $f|_{S}$ is constant. Then we can assume that 
$$f|_{S}=\frac{-1}{\sqrt{c^2-1}}=-\sinh{\theta}.$$
Then we have
$$
f_{\nu}|_{S}=-\cosh{\theta}.
$$
$\forall \, p \in S$, we consider the integral curve 
$\gamma_{p}(t)$ of $\frac{\nabla f}{|\nabla f|}$ starting 
from $p$. By Proposition \ref{prop: 2.2}, we know that 
$\gamma_{p}$ is a geodesic. Then a direct calculation shows
$$
f(\gamma_{p}(t))=\sinh{(t-\theta)}.
$$
Define $h: S \rightarrow \mathbb{R}\cup \{\infty\}$ as following:
$$h(p)=\sup\limits \{t>0 ~|~\gamma_{p}|_{(0,t)}\subset \Omega^{\circ}\}.$$
By the definition of $h$, we know that if $h(p)=\infty$, then for any $t>0$,
$\gamma_{p}(t) \in \Omega^{\circ}$ and if $h(p)<\infty$, $\gamma_{p}$ will meet $\Sigma \setminus S$ at the time $h(p)$.
In fact, $h$ has the following properties:
\begin{proposition}
  Let $p \in S$. 
  
  (1) If $h(p) = \infty$, there exists a neighborhood $U_p$ of $p$ in $S$ such that for any point $q \in U_p$, $h(q)=\infty$.
  
(2) If $h(p) < \infty$, there exists a neighborhood $U_p$ of $p$ in $S$ such that for any point $q \in U_p$, $h(q)<\infty$.

(3) If $$\sup\limits_{S} h <\infty,$$ then $h$ satisfies 
$$h\equiv 2\theta.$$
  \label{proposition:3.1}
\end{proposition}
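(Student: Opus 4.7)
All three parts follow from continuous dependence of the integral curves $\gamma_q(t)$ on the initial point $q \in S$ (the unit vector field $\nabla f/|\nabla f|$ is smooth on $\Omega$ since $|\nabla f|^2 = 1 + f^2 \geq 1$), combined with two rigid identities: $f(\gamma_p(t)) = \sinh(t - \theta)$ along any integral curve starting at $S$, and $|\nabla_\Sigma f|^2 + (c^2-1)f^2 = 1$ on $\Sigma$. The latter forces $|f| \leq \sinh\theta$ on $\Sigma$, which via the first identity gives the a priori bound $h(p) \in [0, 2\theta]$ whenever $h(p) < \infty$.

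For (1), suppose $h(p_0) = \infty$. The compact arc $\gamma_{p_0}([\epsilon, 2\theta])$ sits at positive distance from $\Sigma$ for any small $\epsilon > 0$, so continuous dependence yields $\gamma_q([\epsilon, 2\theta]) \subset \Omega^\circ$ on a neighborhood $U_{p_0}$ of $p_0$ in $S$. The inward initial velocity $\gamma_q'(0) = \nabla f/|\nabla f|$ on $S$ (since $f_\nu|_S = -\cosh\theta < 0$) keeps $\gamma_q((0, \epsilon]) \subset \Omega^\circ$ uniformly as well. Finally, for $t > 2\theta$ the identity $f(\gamma_q(t)) > \sinh\theta$ together with the bound $|f| \leq \sinh\theta$ on $\Sigma$ precludes any further meeting with $\Sigma$, so the flow extends indefinitely in $\Omega^\circ$ and $h(q) = \infty$ on $U_{p_0}$.

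For (2), at $q_0 := \gamma_{p_0}(h(p_0)) \in \Sigma$ the outward exit velocity $\langle \gamma_{p_0}'(h(p_0)), \nu_{\Sigma} \rangle = cf(q_0)/|\nabla f|(q_0)$ must be $\geq 0$ (otherwise the curve would re-enter $\Omega^\circ$, contradicting the supremum definition of $h$), so $f(q_0) \geq 0$ and thus $h(p_0) \geq \theta$. When $h(p_0) > \theta$ the exit is strictly transverse; the implicit function theorem applied to $\rho(\gamma_q(t)) = 0$, with $\rho$ the signed distance to $\Sigma$, produces a smooth local exit function $h(q)$ on a neighborhood of $p_0$, hence $h(q) < \infty$ there. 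The borderline tangential case $h(p_0) = \theta$, with $f(q_0) = 0$, is dispatched by a second-order analysis against the smooth hypersurface $\{f = 0\}$ (where $|\nabla f| = 1$).

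For (3), (1) and (2) show that both $\{h = \infty\}$ and $\{h < \infty\}$ are open in the connected set $S$. Under $\sup_S h < \infty$ the first is empty, so $h < \infty$ everywhere and the a priori bound gives $h \leq 2\theta$. To upgrade to $h \equiv 2\theta$, I would exploit the warped-product structure of the flow tube (a direct consequence of $\nabla^2 f - fg = 0$ and Proposition \ref{prop: 2.2}): pulling back $g$ by $(p, t) \mapsto \gamma_p(t)$ gives the rigid form $dt^2 + (\cosh(t-\theta)/\cosh\theta)^2 g|_S$. Combined with (2), the exit map $\Phi(p) = \gamma_p(h(p))$ is then a smooth open injection of $S$ into $\Sigma \setminus S$; its image is compact and hence a union of whole boundary components, so by connectedness of $S$ it is a single component $S'$ and $\Phi: S \to S'$ is a diffeomorphism. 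The warped-product rigidity at the slice $t = h(p)$ together with the boundary formula then forces $f \equiv \sinh\theta$ on $S'$, yielding $h \equiv 2\theta$. The main obstacle is precisely this rigidity step: propagating the constancy of $f$ from $S$ through the flow to the entire image component $S'$, synthesizing the warped-product structure, the boundary formula, and the topological fact that $\Phi(S)$ is a whole boundary component.
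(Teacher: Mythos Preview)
Your treatment of (1) and (2) is more detailed than the paper's, which simply says they ``follow from the definition of boundary'' and omits the argument. Your continuous-dependence reasoning is fine; the tangential borderline $h(p_0)=\theta$ in (2) can in fact be excluded outright rather than ``dispatched by second-order analysis'': at such $q_0\in\Sigma$ with $f(q_0)=0$ one computes (as in the proof of Proposition~\ref{proposition:3.3}) that $h(e_1,e_1)=c>0$ for $e_1=\nabla f/|\nabla f|$, so $\Sigma$ is strictly convex in the flow direction and the tangent geodesic would lie outside $\bar\Omega$ on both sides of $q_0$ to second order, contradicting $\gamma_{p_0}((0,\theta))\subset\Omega^\circ$.

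For (3), the paper's argument is dramatically shorter than your proposed route and sidesteps entirely the ``main obstacle'' you flag. Once $h$ is finite and smooth on the compact manifold $S$, take $p_0$ realizing $\min_{S}h$. At this minimum a first-variation argument gives $\gamma_{p_0}'(h(p_0))\perp T_{\gamma_{p_0}(h(p_0))}\Sigma$, so at $q_0=\gamma_{p_0}(h(p_0))$ the outward normal is $\nu=\nabla f/|\nabla f|$ and hence $f_\nu=|\nabla f|$. The Robin condition then reads
\[
\cosh\bigl(h(p_0)-\theta\bigr)=c\,\sinh\bigl(h(p_0)-\theta\bigr),
\]
forcing $h(p_0)=2\theta$. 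Combined with your a priori bound $h\le 2\theta$, this yields $h\equiv 2\theta$ immediately.

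By contrast, your plan builds the exit diffeomorphism $\Phi:S\to S'$, identifies $S'$ as a full boundary component, and then appeals to an unspecified ``warped-product rigidity'' to force $f\equiv\sinh\theta$ on $S'$. That last step is a genuine gap as written: constancy of $f$ on $S$ does not propagate along the flow to $S'$ when the travel time $h(p)$ is allowed to vary, so some further computation is required. If you unwind what the Robin condition says on the graph $\{(p,h(p))\}$ in the warped metric, you find exactly that at any critical point of $h$ one has $h=2\theta$; but this is just the paper's minimum argument rediscovered, and the detour through $\Phi$ and the component $S'$ contributes nothing.
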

\begin{proof}
    (1) and (2) follow from the definition of boundary, so we omit the proof. Now we prove (3). Since 
    $$\sup\limits_{S} h <\infty,$$
    we know that $\gamma_{p}(t)$ will meet $\Sigma \setminus S$ at the time $h(p)$. 
    Since the boundary is a smooth manifold, we conclude that $h$ is smooth. 
    Let $h(p_0) = \min_{p \in S} h(p)$ and $\gamma_{p_0} $ be 
the corresponding geodesic starting from $p_0$ such 
that $\gamma_{p_0}(h(p_0)) \in \Sigma \setminus S$.
We then know that 
$\gamma_{p_0}^{\prime}(h(p_0))\perp T_{\gamma_{p_0}(h(p_0))}\Sigma $. Therefore, we have
$$\frac{d}{dt}f(\gamma_{p_0}(t))|_{t=h(p_0)}=cf(\gamma_{p_0}(h(p_0))),
$$
that is,
$$
\cosh{(h(p_0)-\theta)}=c\sinh{(h(p_0)-\theta)},
$$
which shows 
$$
h(p_0)=2\theta
$$
and
$$
f(\gamma_{p_0}(h(p_0)))=\sinh{\theta}=\frac{1}{\sqrt{c^2-1}}.
$$
Then for any $p \in S$, we have
$$
h(p) = 2\theta.
$$
\end{proof}

\begin{proposition}
Let $S \subset \Sigma$ be the boundary component such that $f|_{S}=-\sinh{\theta}$. 
If $$
\sup\limits_{\Omega} |f|= \infty,
$$ 
then $\Omega$ is isometirc to the
warped product space
$$
S\times [0,\infty)_{t},\,\,\,g=dt^2+\frac{(\cosh{(t-\theta))^2}}{(\cosh{\theta)^2}} g|_{S}.
$$
\label{proposition:3.2}
\end{proposition}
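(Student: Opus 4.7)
The plan is to construct an explicit isometry $\Phi : S \times [0,\infty) \to \Omega$ via the flow of the normalized gradient $N = \nabla f/|\nabla f|$, using the fact (Proposition \ref{prop: 2.2}) that its integral curves $\gamma_p$ are geodesics. First, I would show that the hypothesis $\sup_\Omega |f| = \infty$ forces $h \equiv \infty$ on $S$. By parts (1) and (2) of Proposition \ref{proposition:3.1}, the sets $\{h=\infty\}$ and $\{h<\infty\}$ are both open in the connected set $S$, so one is empty; if $h<\infty$ everywhere, part (3) gives $h\equiv 2\theta$, and then $f(\gamma_p(t))=\sinh(t-\theta)$ with $t\in[0,2\theta]$ would force $|f|\le\sinh\theta$ throughout $\Omega$, contradicting the hypothesis.

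With $h\equiv\infty$ in hand, define $\Phi(p,t)=\gamma_p(t)$. This map is smooth by smooth dependence of ODEs on initial data and injective because $\sinh(t-\theta)=f(\Phi(p,t))$ determines $t$, and running the geodesic backward recovers $p$ by uniqueness. The central step is the pull-back metric computation. Since $\nabla^2 f = fg$, one has $\nabla^{\Omega}_X\nabla f = fX$ for every tangent vector $X$; and since $|\nabla f|^2 = f^2+1$, the derivative $X(|\nabla f|)$ vanishes whenever $X$ is tangent to a level set. These together give
$$
\nabla^{\Omega}_X N = \frac{f}{|\nabla f|}X = \tanh(t-\theta)\,X
$$
along $\gamma_p$ for $X$ tangent to the level sets. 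For $X\in T_pS$, let $\tilde X$ denote the $\Phi_*$-pushforward along $\gamma_p$; since $\partial_t$ and $\tilde X$ arise from product coordinates, $[\partial_t,\tilde X]=0$, hence $\nabla^{\Omega}_N\tilde X=\nabla^{\Omega}_{\tilde X}N=\tanh(t-\theta)\,\tilde X$. The resulting ODE $\frac{d}{dt}\langle\tilde X,\tilde Y\rangle=2\tanh(t-\theta)\langle\tilde X,\tilde Y\rangle$ with initial value $\langle X,Y\rangle_{g|_S}$ at $t=0$ integrates to
$$
\langle\tilde X,\tilde Y\rangle(t)=\frac{\cosh^2(t-\theta)}{\cosh^2\theta}\langle X,Y\rangle_{g|_S},
$$
and the same ODE with zero initial value (noting that $N|_S=-\nu$) yields $\langle\partial_t,\tilde X\rangle\equiv 0$. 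Combined with $|\partial_t|\equiv 1$, this identifies $\Phi^\ast g$ with $dt^2+\frac{\cosh^2(t-\theta)}{\cosh^2\theta}\,g|_S$, exactly the warped product metric claimed.

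It remains to show $\Phi$ is surjective, and hence a global isometry. The metric computation makes $\Phi$ a local diffeomorphism of manifolds with boundary, so the image $U=\Phi(S\times[0,\infty))$ is open in $\Omega$. Compactness of $S$ makes $U$ closed: if $\Phi(p_n,t_n)\to q$, then $t_n\to\sinh^{-1}(f(q))+\theta$, and a subsequence of $p_n$ converges in $S$ to some $p$ with $\Phi(p,\sinh^{-1}(f(q))+\theta)=q$. Connectedness of $\Omega$ then forces $U=\Omega$, and as an immediate consequence $\Sigma=\Phi(S\times\{0\})=S$, so $S$ is the unique boundary component. The main technical obstacle is the pull-back metric computation: it is the Obata equation that controls the shape operators of the level sets and pins down the warping factor $\cosh(t-\theta)/\cosh\theta$. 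Once that identification is in hand, the surjectivity step is a standard clopen argument.
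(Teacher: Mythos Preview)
Your argument is correct and follows the same route as the paper: Proposition~\ref{proposition:3.1} forces $h\equiv\infty$, the normalized gradient flow gives the map $\Phi:S\times[0,\infty)\to\Omega$, and the Obata equation determines the warping factor. The paper compresses your explicit shape-operator ODE and clopen surjectivity argument into the single phrase ``by Morse theory,'' so your write-up is the more self-contained of the two; the only point to tidy is that your contradiction of the case $h\equiv 2\theta$ tacitly uses surjectivity of $\Phi$ (to pass from $|f\circ\gamma_p|\le\sinh\theta$ to $|f|\le\sinh\theta$ on all of $\Omega$), which you establish only afterwards---but the paper's proof makes the same forward reference.
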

\begin{proof}
    Since $$
\sup\limits_{\Omega} |f|= \infty,
$$ 
there must be a point $p \in S$ such that $h(p)=\infty$. Then by Proposition \ref{proposition:3.1}, we know that for any $q \in S$, $h(q)=\infty$. By Morse theory,
we conclude that $\Omega$ is isometric to the warped product space
$$S \times [0,\infty)_t,$$
with metric
$$
g=dt^2+g(t),
$$
where $g(t)$ is a family of metrics on $S$ and $g(0)=g|_{S}$. Moreover, 
$$
f=\sinh{(t-\theta)}.
$$
Since $\nabla^2 f -fg =0$, a direct calculation shows that
$$
\frac{1}{2}f^{\prime}(t)g^{\prime}(t)-f(t)g(t)=0,
$$
that is,
$$
\frac{1}{2}\cosh{(t-\theta)}g^{\prime}(t)=\sinh{(t-\theta)}g(t),
$$
which shows that 
$$
g(t)=\frac{(\cosh{(t-\theta))^2}}{(\cosh{\theta)^2}} g|_{S}.
$$
We then finish the proof.
\end{proof}

\begin{proposition}
Let $S \subset \Sigma$ be the boundary component such that $f|_{S}=-\sinh{\theta}$. 
If $$
\sup\limits_{\Omega} |f|< \infty,
$$ 
then $\Omega$ is isometirc to the
warped product space
$$
S\times [0,2\theta],\,\,\,g=dt^2+\frac{(\cosh{(t-\theta))^2}}{(\cosh{\theta)^2}} g|_{S}.
$$
\label{proposition:3.2.1}
\end{proposition}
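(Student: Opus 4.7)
The plan is to mirror the proof of Proposition \ref{proposition:3.2} almost verbatim, invoking part (3) of Proposition \ref{proposition:3.1} in place of the escape-to-infinity argument used there. First I would observe that the hypothesis $\sup_\Omega |f|<\infty$ forces $\sup_S h<\infty$: along any integral curve $\gamma_p$ of $\nabla f/|\nabla f|$ starting at $p\in S$ we have $f(\gamma_p(t))=\sinh(t-\theta)$, so $h(p)=\infty$ for some $p$ would give $|f(\gamma_p(t))|\to\infty$, contradicting the boundedness of $f$. Combined with Proposition \ref{proposition:3.1}(3) this yields $h\equiv 2\theta$ on $S$.

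Next I would define
$$\Psi:S\times[0,2\theta]\to\Omega,\qquad \Psi(p,t)=\gamma_p(t),$$
which is smooth by smooth dependence of geodesics on initial data, and well-defined on the closed strip because $h\equiv 2\theta$. By Proposition \ref{prop: 2.2} the curves $t\mapsto\gamma_p(t)$ are unit-speed geodesics, and since $\gamma_p'(0)=\nu$ is normal to $S$ while $f$ has no critical points, $\Psi$ is a local diffeomorphism on the interior. I would then check that $\Psi$ is a global diffeomorphism onto $\Omega$: injectivity follows because distinct integral curves of the nowhere-vanishing vector field $\nabla f/|\nabla f|$ are disjoint, and surjectivity follows because every point of $\Omega$ lies on a maximal integral curve of $\nabla f/|\nabla f|$ whose backward limit attains the minimum value $-\sinh\theta$ of $f$; by the boundary bound $|f|\leq\sinh\theta$ on $\Sigma$ established at the start of Section 3 and the absence of interior critical points, this backward limit must lie on $S$.

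Having identified $\Omega$ with $S\times[0,2\theta]$ via $\Psi$, the normal exponential nature of $\Psi$ lets me write the pulled-back metric as $g=dt^2+g(t)$ with $g(0)=g|_S$. Substituting $f(\cdot,t)=\sinh(t-\theta)$ into the Obata equation $\nabla^2f-fg=0$ gives, exactly as in the proof of Proposition \ref{proposition:3.2}, the ODE
$$\frac{1}{2}\cosh(t-\theta)\,g'(t)=\sinh(t-\theta)\,g(t),$$
whose unique solution with $g(0)=g|_S$ is
$$g(t)=\frac{(\cosh(t-\theta))^2}{(\cosh\theta)^2}\,g|_S,$$
completing the identification.

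The only nontrivial point, rather than a serious obstacle, is the surjectivity of $\Psi$; the rest is a direct transcription of Proposition \ref{proposition:3.2}. In particular, once $h\equiv 2\theta$ is in place, the second boundary component is automatically realized as $\Psi(S\times\{2\theta\})$, which is where $f$ attains its maximum $\sinh\theta$, and no further case analysis is needed.
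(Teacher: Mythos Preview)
Your approach is the same as the paper's: the paper combines Proposition~\ref{proposition:3.1}(3) with an appeal to ``Morse theory'' to produce the product structure $S\times[0,2\theta]$ and then solves the identical ODE for the warping factor, whereas you spell out the flow map $\Psi$ explicitly.

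The one point that is not quite airtight is your surjectivity argument. You assert that the backward integral curve of $\nabla f/|\nabla f|$ from an arbitrary point reaches the minimum value $-\sinh\theta$ and hence lands on $S$, but the justification given (the boundary bound $|f|\le\sinh\theta$ and the absence of interior critical points) does not by itself exclude the curve terminating earlier on some other boundary component at a point where $-\sinh\theta<f\le 0$. The clean fix is an open--closed argument: $\Psi(S\times[0,2\theta])$ is compact, hence closed in $\Omega$, and it is also open because $\gamma_p$ meets $\Sigma$ orthogonally at both $t=0$ and $t=2\theta$ (the latter since $f(\gamma_p(2\theta))=\sinh\theta$ forces $\nabla_\Sigma f=0$ there), so $\Psi$ is a local diffeomorphism onto a neighborhood in $\Omega$ even at the boundary slices; connectedness of $\Omega$ then gives surjectivity. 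This is presumably what the paper's invocation of Morse theory is meant to encode.
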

\begin{proof}
    Since $$
\sup\limits_{\Omega} |f|< \infty,
$$ 
we know that 
$$\sup\limits_{S} h <\infty.$$ 

By Proposition \ref{proposition:3.1} (3) and Morse theory,
we conclude that $\Omega$ is isometric to the warped product space
$$S \times [0,2\theta]_t,$$
with metric
$$
g=dt^2+g(t),
$$
where $g(t)$ is a family of metrics on $S$ and $g(0)=g|_{S}$. By a similar caculation, we know that
$$
g(t)=\frac{(\cosh{(t-\theta))^2}}{(\cosh{\theta)^2}} g|_{S}.
$$
We then finish the proof.
\end{proof}

\begin{proof1}
The first case in Proposition \ref{theorem: 1.1} follows from Propositions \ref{proposition:3.1}, \ref{proposition:3.2} and \ref{proposition:3.2.1}. 
\end{proof1}

Next, we concentrate on the second case in Proposition \ref{theorem: 1.1}. Since $f$ is non-constant on any boundary component and $|\nabla f|\not =0$, we know that $\Omega_0$ is a manifold of dimension $n$ with compact boundary $\partial \Omega_0$. Moreover, we also have $\Omega^{\circ}_0=\Omega^{\circ} \cap \Omega_0$ and $\partial \Omega_0= \Sigma \cap \Omega_0$.

\begin{proposition}
(1) For any $p \in \Omega$ with $f(p)>0$, the integral curve of $-\frac{\nabla f}{|\nabla f|}$ starting at $p$ will meet $\Omega^{\circ}_0$ before it reaches $\Sigma$. For any $p\in\Omega$ with $f(p)<0$, the integral curve of $\frac{\nabla f}{|\nabla f|}$ starting at $p$ will meet $\Omega^{\circ}_0$ before it reaches $\Sigma$.
    
(2) For $0<a_1<a_2$, the integral curve of $-\frac{\nabla f}{|\nabla f|}$ defines an injective map from $\Omega_{a_2}$ to $\Omega_{a_1}$. For $0>a_1>a_2$, the integral curve of $\frac{\nabla f}{|\nabla f|}$ defines an injective map from $\Omega_{a_2}$ to $\Omega_{a_1}$.

  \label{proposition:3.3}
\end{proposition}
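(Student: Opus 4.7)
The plan is, for $p \in \Omega$ with $f(p) > 0$ (the case $f(p) < 0$ being entirely symmetric), to follow the integral curve $\gamma$ of $-\nabla f/|\nabla f|$ starting at $p$ and show it reaches $\Omega_0^\circ$ without first hitting $\Sigma$. By Proposition \ref{prop: 2.1}, $|\nabla f|^2 = 1 + f^2$, so $f\circ\gamma$ satisfies the closed ODE $\tfrac{d}{dt}f(\gamma(t)) = -\sqrt{1+f(\gamma(t))^2}$, which integrates explicitly to $f(\gamma(t)) = \sinh(T-t)$ for $T := \sinh^{-1}(f(p))$. The task thus reduces to showing that the maximal existence time $t_{\max}$ of $\gamma$ in $\Omega$ satisfies $t_{\max} > T$, since then $\gamma(T) \in \Omega^\circ \cap \{f=0\} = \Omega_0^\circ$.

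Suppose for contradiction $t_{\max} \leq T$, and set $q = \gamma(t_{\max}) \in \Sigma$, so $f(q) \geq 0$. Let $\rho$ denote the signed distance to $\Sigma$, positive on $\Omega^\circ$, with $\nabla\rho = -\nu$ on $\Sigma$; then $\rho\circ\gamma \geq 0$ on $[0,t_{\max}]$ with a one-sided minimum at $t_{\max}$. When $f(q) > 0$, the Robin condition gives $f_\nu(q) = cf(q) > 0$, so
\[
\tfrac{d}{dt}\rho(\gamma(t))\big|_{t_{\max}} = \langle -\nabla f/|\nabla f|,\, -\nu\rangle(q) = f_\nu(q)/|\nabla f(q)| > 0,
\]
contradicting the first-order condition at the minimum. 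The main obstacle is the remaining tangential case $f(q) = 0$, where $q \in \partial\Omega_0$ and $\nabla f(q) \in T_q\Sigma$ so that $\gamma$ meets $\Sigma$ tangentially; this is where the Obata equation and the Robin condition must conspire.

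To rule it out, I tangentially differentiate $f_\nu = cf$. Using the Obata identity $\nabla^{\Omega}_X \nabla f = fX$ (which kills $\langle \nabla^{\Omega}_X\nabla f,\nu\rangle$ for $X \in T\Sigma$), one obtains for any $X \in T\Sigma$ the relation $h(X,\nabla_\Sigma f) = c\langle X,\nabla_\Sigma f\rangle$, so $\nabla_\Sigma f$ is an eigenvector of the Weingarten map $A^\nu$ with eigenvalue $c$ whenever it is nonzero. At $q \in \partial\Omega_0$, Proposition \ref{prop: 2.1} gives $|\nabla_\Sigma f(q)|^2 = |\nabla f(q)|^2 = 1$, so $h(\nabla_\Sigma f,\nabla_\Sigma f)(q) = c > 0$. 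Since $\gamma$ is a geodesic by Proposition \ref{prop: 2.2} with $\gamma'(T) = -\nabla_\Sigma f(q)$, combining the geodesic equation with the standard identity $\nabla^2\rho(X,Y) = -h(X,Y)$ for tangent $X,Y$ yields $\tfrac{d^2}{dt^2}\rho(\gamma(t))|_{t=T} = -c < 0$, contradicting the second-order necessary condition at a one-sided minimum. Hence $t_{\max} > T$, proving (1).

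For (2), given $0 < a_1 < a_2$ and $p \in \Omega_{a_2}$, part (1) guarantees that the integral curve of $-\nabla f/|\nabla f|$ from $p$ remains in $\Omega$ on $[0, T(a_2)]$ with $T(a_2) := \sinh^{-1}(a_2)$, so evaluating at the intermediate time $\tau := T(a_2) - T(a_1) > 0$ yields a well-defined map $\Omega_{a_2}\to\Omega_{a_1}$. Injectivity is immediate from uniqueness of solutions to the smooth ODE $\gamma' = -\nabla f/|\nabla f|$: two trajectories coinciding at time $\tau$ must coincide at $t=0$. The case $0 > a_1 > a_2$ is identical using the flow of $+\nabla f/|\nabla f|$ and part (1) applied to points with $f < 0$.
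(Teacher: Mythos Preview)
Your proof is correct and follows essentially the same route as the paper's. Both arguments hinge on the identical computation at a point $q\in\partial\Omega_0$: differentiating the Robin condition tangentially and using $\nabla^2 f = fg$ to kill the cross term yields $h(e_1,e_1)=c>0$ for $e_1=\nabla f/|\nabla f|$, which forces the tangent geodesic to leave $\Omega^\circ$ immediately. The only difference is presentational: where the paper cites Lemma~4.3 of \cite{chen2020obata} for the non-tangential case (the curve reaching $\Omega_0$ before $\Sigma$), you give the direct first-derivative argument via $f_\nu(q)=cf(q)>0$; and where the paper simply invokes the geometric meaning of $h(e_1,e_1)>0$, you spell it out through the second derivative of the distance function $\rho$. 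One small imprecision: your phrase ``maximal existence time of $\gamma$ in $\Omega$'' needs care, since $\Omega$ is complete and $\gamma$ exists for all time --- you mean the first positive time $\gamma$ meets $\Sigma$, and when $p\in\Sigma$ you should first note (as your own first-derivative computation shows) that $\gamma$ enters $\Omega^\circ$ immediately.
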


\begin{proof}
Take a point $p$ with $f(p)\not =0$. By a similar proof of Lemma 4.3 in \cite{chen2020obata}, we know that the corresponding integral curve (geodesic) starting at $p$ will meet $\Omega_0$ before it reaches $\Sigma$. 

Now, we prove that the integral curve meets $\Omega^{\circ}_0$ before it reaches $\Sigma$. In fact, we only need to prove that for any point $q \in \partial \Omega_0$, the geodesic $\gamma_{q}(t)$ which satisfies $\gamma_{q}(0)=q $ and $\gamma_{q}^{\prime}(0)=\frac{\nabla f}{|\nabla f|}(q)$ or $-\frac{\nabla f}{|\nabla f|}(q)$ is not contained in $\Omega^{\circ}$ when $t$ is close to $0$. Since $f(q)=0$ and $f_{\nu}= cf$, we know that $\frac{\nabla f}{|\nabla f|}(q)$ is tangent to $T_{q}\Sigma$. Let $\{e_1,e_2,\cdots,e_{n}\}$ be an orthonormal frame near $q$ in $T\Sigma$ such that $e_1(q) = \frac{\nabla f}{|\nabla f|}(q)$, then at the point $q$, we have
$$ c(e_1 f)=e_1 (\nu f)= \nabla^{2} f (e_1,\nu) + \nabla^{\Omega} _{e_1} \nu f=\nabla^{\Omega} _{e_1} \nu f,$$
which shows 
$$h(e_1,e_1)=c>0.$$
We then know that the geodesic $\gamma_{q}(t)$ discuessed above is not contained in $\Omega^{\circ}$ when $t$ is close to $0$.

Obviously, (2) follows from (1).
\end{proof}

For any $a \in \mathbb{R}$ such that $\Omega_a \not= \varnothing$, by Proposition \ref{proposition:3.3}, we know that the integral curve discussed above defines an injective map
$$
\Psi_{a}: \Omega_a\rightarrow \Omega_0.
$$
Therefore, $\Omega$ can be considered as a domain with boundary of the warped product space $(\widehat{\Omega},g)$, where
$$
\widehat{\Omega}=\Omega_0 \times (-\infty,\infty)_{t}, \,\,g=dt^2 + (\cosh{t})^2 g|_{\Omega_0}
$$
and
$$
f =\sinh{t}.
$$
Since $\Omega$ is connetced and $\Omega_0 \subset \Omega$, we also know that $\Omega_0$ is connected.

For any $x \in \Omega^{\circ}_0$, we consider the integral curve $\gamma_x(t)$ of $\frac{\nabla f}{|\nabla f|}$ starting at $x$. Similar to the definition of the function $h$ discussed above, we define 
$$\phi: \Omega_0^{\circ} \rightarrow \mathbb{R} \cup \{\infty\}$$
as following
$$\phi(x)=\sup\{t>0 ~|~ \gamma_x|_{(0,t)}\subset \Omega^{\circ}\}.$$
We then know that if $\phi(x)<\infty$, $\gamma_x$ will meet $\Sigma$ at the time $\phi(x)$.
\begin{proposition}
For any $x \in \Omega^{\circ}_0$, 
$$0< \phi(x) \leq \theta.$$
Moreover, the function $\phi$ is smooth.
  \label{proposition:3.3.1}
\end{proposition}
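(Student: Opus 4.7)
First I would handle positivity: since $x\in\Omega_0^\circ\subset\Omega^\circ$ lies in an open set, the geodesic $\gamma_x(t)$ stays in $\Omega^\circ$ for all sufficiently small $t>0$, so $\phi(x)>0$. Next, for any $x$ at which $\phi(x)$ is finite, I would verify that $\gamma_x$ crosses $\Sigma$ transversally at $y:=\gamma_x(\phi(x))$ via
$$
\langle\gamma_x'(\phi(x)),\nu\rangle=\frac{f_\nu(y)}{|\nabla f|(y)}=\frac{cf(y)}{|\nabla f|(y)}=c\tanh\phi(x)>0.
$$
The implicit function theorem then yields smoothness of $\phi$ in a neighborhood of such an $x$.

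For the upper bound $\phi(x)\le\theta$, I would use the identity on $\Sigma$ obtained by combining $|\nabla f|^2-f^2=1$ with $f_\nu=cf$, namely
$$
|\nabla_\Sigma f|^2+(c^2-1)f^2=1,
$$
which forces $|f|\le 1/\sqrt{c^2-1}=\sinh\theta$ on $\Sigma$. Since $f(y)=\sinh\phi(x)$ and $\phi(x)>0$, this gives $\sinh\phi(x)\le\sinh\theta$, i.e.\ $\phi(x)\le\theta$ whenever $\phi(x)$ is finite.

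The substantive step is to rule out $\phi(x)=\infty$. I would argue by connectedness on the set $W:=\{x\in\Omega_0^\circ:\phi(x)<\infty\}$. Openness of $W$ is a direct consequence of the transversality above. For closedness in $\Omega_0^\circ$, if $x_n\in W$ with $x_n\to x^*\in\Omega_0^\circ$, then the uniform bound $\phi(x_n)\le\theta$ allows one to extract a subsequence $\phi(x_{n_k})\to t^*\in[0,\theta]$, and the points $(x_{n_k},\phi(x_{n_k}))\in\Sigma$ then converge in $\widehat{\Omega}$ to $(x^*,t^*)$. Because $\Omega$ is complete, its isometric image is closed in $\widehat{\Omega}$, so $\Sigma$ is closed in $\widehat{\Omega}$; hence $(x^*,t^*)\in\Sigma$ and $\phi(x^*)\le t^*<\infty$, giving $x^*\in W$. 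For non-emptiness, pick $y\in\Sigma$ with $f(y)\ne 0$ (available since $f$ is non-constant on each boundary component); by Proposition \ref{proposition:3.3} the appropriate integral curve starting at $y$ reaches $\Omega_0^\circ$ in finite time $|\sinh^{-1}f(y)|$, producing a point $x$ with $\phi(x)<\infty$. Connectedness of $\Omega_0^\circ$ then forces $W=\Omega_0^\circ$.

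The chief obstacle is the closedness of $W$, which pivots on the fact that $\Sigma$ is closed inside the ambient warped product $\widehat{\Omega}$, not merely inside $\Omega$. This ultimately rests on completeness of $\Omega$, which prevents limit points of sequences on $\Sigma$ from escaping into $\widehat{\Omega}\setminus\Omega$. Once this is secured, the uniform bound $\phi\le\theta$ from the boundary identity, combined with openness and non-emptiness of $W$, closes the argument via connectedness of $\Omega_0^\circ$.
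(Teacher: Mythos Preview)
Your argument follows the same connectedness strategy as the paper: show that $W=\{\phi<\infty\}$ is open, closed, and nonempty in the connected set $\Omega_0^\circ$, derive the bound $\phi\le\theta$ from the boundary identity $|\nabla_\Sigma f|^2+(c^2-1)f^2=1$, and obtain smoothness of $\phi$ from smoothness of $\Sigma$. The paper dispatches both the openness of $W$ and of its complement with a single phrase (``by the definition of boundary''), whereas you supply the explicit transversality computation $\langle\gamma_x',\nu\rangle=c\tanh\phi(x)>0$ for openness and a limit argument using the uniform bound for closedness; your treatment is more detailed on exactly the step the paper leaves implicit.

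One correction to your closedness step: you conclude that $\Sigma$ is closed in $\widehat\Omega$ from completeness of $\Omega$, but completeness of a Riemannian manifold with boundary does not in general force its isometric image to be closed in an ambient space, since the extrinsic distance in $\widehat\Omega$ may be strictly smaller than the intrinsic distance in $\Omega$ (so a sequence Cauchy for the ambient metric need not be Cauchy for the intrinsic one). The correct justification is simpler and already available: $\Sigma$ is \emph{compact} by hypothesis, and the identification $\Omega\hookrightarrow\widehat\Omega$ is continuous, so the image of $\Sigma$ is compact and hence closed in $\widehat\Omega$. With this substitution your argument goes through unchanged.
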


\begin{proof}
 Obviously, $\phi$ is positive. By the definition of boundary, for any $x \in \Omega^{\circ}_0$, if $\phi(x) = \infty$, then there exists a neighborhood $U_x$ of $x$ in $\Omega^{\circ}_0$ such that for any point $y \in U_x$, $\phi(y)=\infty$. Similarly, if $\phi(x) < \infty$, there exists a neighborhood $U_x$ of $x$ in $\Omega^{\circ}_0$ such that for any point $y \in U_x$, $\phi(y)<\infty$. Then we know that the set $\{x \in \Omega^{\circ}_0 ~|~ \phi(x) <\infty\}$ is open and closed. For any $l \in (0,\theta]$, we know that there exists a point $p \in \Sigma$ such that $f(p) = \sinh{l}.$ By Proposition \ref{proposition:3.3}, there exists a point $x \in \Omega^{\circ}_0$ such that $\phi(x)=l<\infty$. Then we conclude that 
 $$\{x \in \Omega^{\circ}_0 ~|~ \phi(x) <\infty\}=\Omega^{\circ}_0.$$
 Since $|\nabla_{\Sigma} f|^2+(c^2-1)f^2=1$ and $f=\sinh{t}$, we know that
 $$\phi(x) \leq \theta.$$
 Since the boundary $\Sigma$ is a smooth manifold, the function 
$\phi$ is smooth in $\Omega_0^{\circ}$.
\end{proof}

Now, we extend $\phi$ to $\partial \Omega_0$ and define the function as following
\begin{equation*}
\phi(x)=\begin{cases}
\begin{aligned}
&\phi(x), & x \in \Omega_0^{\circ},\\
&0, & x \in \partial \Omega_0.
\end{aligned}
\end{cases}
\end{equation*}
We show that $\phi$ is continuous. If there exists a sequence $\{x_m\} \subset \Omega_0$ such that $x_m \rightarrow x \in \partial \Omega_0$, but $\phi(x_m)$ does not converge to $0$, then we can find a subsequence (denote also by $\{x_m\}$) such that $x_m \rightarrow x \in \partial \Omega_0$ and $\phi(x_m) \rightarrow a >0$. In other words, the sequence $\{(x_m,\phi(x_m))\}$ converges to the point $(x,a) \in \Sigma$. However, by Proposition \ref{proposition:3.3}, we conclude $x \in \Omega_0^{\circ}$, which is a contradiction. Therefore, $\phi$ is continuous.

Now we consider the boundary $\Sigma$. In fact, we have the following
\begin{proposition}

Let $\Sigma_{+}=\{p \in \Sigma~|~f(p)\geq 0\}$. Then 
$$
\Sigma_{+}=\{(x,\phi(x)) \in \widehat{\Omega} ~|~ x\in \Omega_0\},
$$
where $\phi \in \, C^{\infty}(\Omega^{\circ}_0)\cap C(\Omega_0)$ 
is non-negative and satisfies 
$$
\frac{\cosh{\phi}}{\sqrt{1+(\cosh{\phi})^{-2}|\nabla_{\Omega_0} \phi|_{g|_{\Omega_0}}^2}}=c\sinh{\phi}, \,\,\,{\text{in}} \,\,\Omega^{\circ}_0,
$$
$$\phi>0, \,\,\, {\text{in}} \,\,\Omega^{\circ}_0,
$$
and
$$
\phi=0, \,\,\, {\text{on}} \,\,\partial \Omega_0.
$$
\label{proposition:3.4}
\end{proposition}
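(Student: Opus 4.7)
The plan is to realize $\Sigma_+$ as a graph over $\Omega_0$ inside the warped product $\widehat{\Omega}$ by exploiting the flow of $\nabla f/|\nabla f|$ that was already used to embed $\Omega$ into $\widehat{\Omega}$. Fix $p \in \Sigma_+$ with $f(p) > 0$; by Proposition \ref{proposition:3.3}, flowing along $-\nabla f/|\nabla f|$ from $p$ reaches a unique point $x \in \Omega_0^{\circ}$ in time $t(p) > 0$. Under the identification $\widehat{\Omega} = \Omega_0 \times (-\infty,\infty)_t$ with $f = \sinh t$, this pairs $p$ with $(x, t(p))$, and the definition of $\phi$ together with the injectivity in Proposition \ref{proposition:3.3}(2) force $t(p) = \phi(x)$. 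Conversely, every point of the form $(x,\phi(x))$ with $x \in \Omega_0^{\circ}$ lies on $\Sigma$, by the very definition of $\phi$ as the first exit time. Points of $\Sigma_+$ with $f = 0$ correspond precisely to $\partial \Omega_0$, so the extended $\phi$ (vanishing on $\partial \Omega_0$) produces the full graph description.

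The key step is to convert the Robin condition $f_{\nu} = cf$ into the stated equation. I would describe $\Sigma_+ \cap \{f > 0\}$ as the zero locus of $F(x,t) = t - \phi(x)$ in $\widehat{\Omega}$. Using $g = dt^2 + (\cosh t)^2 g|_{\Omega_0}$ and hence the dual metric $g^{-1} = \partial_t \otimes \partial_t + (\cosh t)^{-2} g|_{\Omega_0}^{-1}$, a direct computation yields
\begin{equation*}
\nabla F = \partial_t - (\cosh t)^{-2} \nabla_{\Omega_0} \phi, \qquad |\nabla F|^2 = 1 + (\cosh \phi)^{-2} |\nabla_{\Omega_0} \phi|_{g|_{\Omega_0}}^2
\end{equation*}
on $\{t = \phi(x)\}$, so that the outward unit normal is $\nu = \nabla F / |\nabla F|$. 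Since $f = \sinh t$ depends only on $t$, we obtain $\nu(f) = \cosh \phi / |\nabla F|$, and equating this with $cf = c \sinh \phi$ gives precisely
\begin{equation*}
\frac{\cosh \phi}{\sqrt{1 + (\cosh \phi)^{-2} |\nabla_{\Omega_0} \phi|_{g|_{\Omega_0}}^2}} = c \sinh \phi \quad \text{in } \Omega_0^{\circ}.
\end{equation*}

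Positivity $\phi > 0$ on $\Omega_0^{\circ}$ and the boundary value $\phi = 0$ on $\partial \Omega_0$ are built into the construction and the continuous extension established just before the proposition; smoothness on $\Omega_0^{\circ}$ is supplied by Proposition \ref{proposition:3.3.1}, and continuity on all of $\Omega_0$ comes from the argument immediately preceding this statement. The main technical point I expect to need attention is checking that $\nabla F$ is genuinely the outward direction and not the inward one: this reduces to verifying once that near an interior point of $\Omega_0^{\circ}$, the region $\Omega$ lies on the side $\{t < \phi(x)\}$ of the graph, which follows from the fact that $f = \sinh t$ increases from $0$ up to a positive value along the flow from $\Omega_0^{\circ}$ into $\Sigma_+$ together with the continuity of this flow. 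Once the sign is fixed, everything else is routine bookkeeping of the warped-product computation, and the three claims of the proposition fall out directly.
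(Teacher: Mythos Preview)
Your proposal is correct and follows essentially the same approach as the paper: both identify $\Sigma_{+}$ as the graph of $\phi$ via the flow of $\nabla f/|\nabla f|$, compute the unit outward normal in the warped product, and translate the Robin condition $f_{\nu}=cf$ into the stated first-order equation for $\phi$. Your level-set formulation $F(x,t)=t-\phi(x)$ is just a repackaging of the paper's parametric normal formula, and your explicit check of the outward direction is a detail the paper leaves implicit.
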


\begin{proof}
Let $S=\{p \in \Sigma ~|~ f(p)> 0\}$. Obviously, $\Sigma_{+} = S \cup \partial \Omega_0$. For any $p \in S$, by Proposition \ref{proposition:3.3}, the integral curve of $-\frac{\nabla f}{|\nabla f|}$ starting at $p$ will meet $\Omega^{\circ}_0$ before it reaches $\Sigma$. In addition, by Proposition \ref{proposition:3.3.1}, for any $x \in \Omega^{\circ}_0$, the integral curve of $\frac{\nabla f}{|\nabla f|}$ starting at $x$ will meet $S$ at some point $p$. Let $\gamma:[0,\phi(x)] \rightarrow \Omega$ be the integral curve (geodesic), where $\gamma(0)=x$, $\gamma(\phi(x))=p$ and $\gamma^{\prime} = \frac{\nabla f}{|\nabla f|}$. Obviously, $\gamma([0,\phi(x))) \subset \Omega^{\circ}$ and $\gamma$ will stop at the point $p$.
By the above discussion, we also conclude 
$$
S=\{(x,\phi(x))\in \widehat{\Omega} ~| ~x\in \Omega_0^{\circ}\}.
$$
For any point $p=(x,\phi(x)) \in S$, the outward unit normal of $\Omega$ is 
$$
\nu = \frac{(-\nabla_{\widehat{\Omega}_{\phi(x)}}\phi,1)}{\sqrt{1+(\cosh{\phi})^{-2}|\nabla_{\Omega_0} \phi|_{g|_{\Omega_0}}^2}},
$$
where $\widehat{\Omega}_{\phi(x)} = \Omega_0 \times \{\phi(x)\}$ (consider $\phi$ as a function on $\widehat{\Omega}_{\phi(x)}$), $g|_{\widehat{\Omega}_{\phi(x)}}= (\cosh{\phi(x)})^2g|_{\Omega_0} $.
Since $f=\sinh{t}$ and $\frac{\partial f}{\partial \nu} =cf$, we then have 
\begin{align*}
c\sinh{\phi}=\frac{\partial f}{\partial\nu}=\nabla f\cdot\nu
=&(0,\cosh{\phi})\cdot\nu \\
=& \frac{\cosh{\phi}}{\sqrt{1+(\cosh{\phi})^{-2}|\nabla_{\Omega_0} \phi|_{g|_{\Omega_0}}^2}}.
\end{align*}
Since $\Sigma_{+} = S \cup \partial \Omega_0$, we then know that 
$$
\Sigma_{+}=\{(x,\phi(x)) \in \widehat{\Omega} ~|~ x\in \Omega_0\}.
$$
\end{proof}

Proposition \ref{proposition:3.4} also implies the following

\begin{proposition}
$\Sigma_{+}$ is homeomorphic to $\Omega_0$ and 
thus $\Omega_0$ is compact and $\Sigma_{+}$ is connected.
\label{proposition:3.5}
\end{proposition}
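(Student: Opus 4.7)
The plan is to exhibit an explicit homeomorphism between $\Omega_0$ and $\Sigma_{+}$ using the graph representation from Proposition \ref{proposition:3.4}, and then transfer compactness from $\Sigma_{+}$ to $\Omega_0$ and connectedness from $\Omega_0$ to $\Sigma_{+}$.

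First I would define the map
\[
\Phi: \Omega_0 \longrightarrow \Sigma_{+}, \qquad \Phi(x)=(x,\phi(x)),
\]
viewed inside the warped product $\widehat{\Omega}=\Omega_0\times(-\infty,\infty)_t$. By Proposition \ref{proposition:3.4}, $\phi\in C^{\infty}(\Omega_0^{\circ})\cap C(\Omega_0)$, so $\Phi$ is continuous. Surjectivity is exactly the content of Proposition \ref{proposition:3.4}, namely $\Sigma_{+}=\{(x,\phi(x))\mid x\in\Omega_0\}$, and injectivity is immediate since $\Phi$ has a one-sided inverse given by the projection onto the first factor,
\[
\pi:\Sigma_{+}\longrightarrow \Omega_0, \qquad \pi(x,\phi(x))=x,
\]
which is the restriction of a continuous projection map on $\widehat{\Omega}$ and is therefore continuous. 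Thus $\Phi$ is a homeomorphism.

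Next, for compactness I would argue that $\Sigma_{+}=\{p\in\Sigma\mid f(p)\geq 0\}$ is closed in $\Sigma$ because $f$ is continuous, and $\Sigma$ is compact by hypothesis. Hence $\Sigma_{+}$ is compact, and by the homeomorphism so is $\Omega_0$. For connectedness, recall that $\Omega_0$ is already known to be connected (this was pointed out just before the definition of $\phi$, using the fact that $\Omega$ is connected and every point can be flowed to $\Omega_0$ along integral curves of $\pm\nabla f/|\nabla f|$ by Proposition \ref{proposition:3.3}). Since $\Phi$ is a homeomorphism, $\Sigma_{+}=\Phi(\Omega_0)$ is connected as well.

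There is no real obstacle here; the only thing to be careful about is the behavior of $\phi$ on $\partial\Omega_0$, where a priori $\phi$ is only continuous and vanishes. But that is enough for $\Phi$ to be continuous on all of $\Omega_0$, and the projection $\pi$ is always continuous, so the homeomorphism statement holds without needing any smoothness of $\phi$ at the boundary. The rest is a routine application of the fact that continuous bijections from compact spaces to Hausdorff spaces are homeomorphisms, and that homeomorphisms preserve both compactness and connectedness.
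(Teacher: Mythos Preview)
Your proof is correct and follows essentially the same approach as the paper: the paper defines the projection $\Psi:\Sigma_{+}\to\Omega_0$, $\Psi((x,\phi(x)))=x$, checks it is a bijection with continuous inverse, and then transfers compactness from $\Sigma_{+}$ and connectedness from $\Omega_0$ exactly as you do. Your version is slightly more explicit about why $\Sigma_{+}$ is compact and where the connectedness of $\Omega_0$ was established, but the argument is the same.
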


\begin{proof}
We define a continuous map 
$$
\Psi: \Sigma_{+} \rightarrow \Omega_0,
$$
where $\Psi((x,\phi(x))=x$.
By Proposition \ref{proposition:3.4}, we know that $\Psi$ is bijective and $\Psi^{-1}$ is continuous. Therefore, $\Sigma_{+}$ is homeomorphic to $\Omega_0$. Since $\Sigma_{+}$ is compact and $\Omega_0$ is connected, we know that $\Omega_0$ is compact and $\Sigma_{+}$ is connected.
\end{proof}

Let $\Sigma_{-}=\{p \in \Sigma | f(p)\leq 0\}$.
Similarly, we also have

\begin{proposition}
(1) $\Sigma_{-}=\{(x,-\psi(x)) \in \widehat{\Omega} ~|~ x\in \Omega_0\},$
where $\psi \in \, C^{\infty}(\Omega^{\circ}_0) \cap C( \Omega_0)$ is non-negative and satisfies
$$
\frac{\cosh{\psi}}{\sqrt{1+(\cosh{\psi})^{-2}|\nabla_{\Omega_0} \psi|_{g|_{\Omega_0}}^2}}=c\sinh{\psi}, \,\,\,{\text{in}} \,\,\Omega^{\circ}_0,
$$
$$
\psi>0, \,\,\, {\text{in}} \,\,\Omega^{\circ}_0,
$$
and
$$
\psi=0, \,\,\, {\text{on}} \,\,\partial \Omega_0.
$$

(2) $\Sigma_{-}$ is homeomorphic to $\Omega_0$ and thus $\Sigma_{-}$ is connected.
\label{proposition:3.6}
\end{proposition}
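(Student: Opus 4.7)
The plan is to mirror the argument used for $\Sigma_{+}$ in Propositions \ref{proposition:3.4} and \ref{proposition:3.5}, exploiting the fact that the equation $\nabla^2 f - fg = 0$ together with the boundary condition $f_{\nu} = cf$ is invariant under $f \mapsto -f$. Concretely, for each $x \in \Omega_0^{\circ}$ I would consider the integral curve $\gamma_x(t)$ of $-\frac{\nabla f}{|\nabla f|}$ starting at $x$; by Proposition \ref{prop: 2.2} this is a geodesic, and in the warped-product coordinate on $\widehat{\Omega}$ it corresponds to $t \mapsto (x,-t)$, along which $f = -\sinh t$. Defining
$$
\psi(x) = \sup\{t > 0 \mid \gamma_x|_{(0,t)} \subset \Omega^{\circ}\},
$$
I would repeat the argument of Proposition \ref{proposition:3.3.1} verbatim, together with the pointwise bound $|f| \leq \sinh\theta$ on $\Sigma$ coming from $|\nabla_{\Sigma} f|^2 + (c^2-1)f^2 = 1$, to conclude $0 < \psi(x) \leq \theta$ and that $\psi \in C^{\infty}(\Omega_0^{\circ})$.

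Next I would extend $\psi$ by zero to $\partial \Omega_0$ and verify continuity by the same contradiction used for $\phi$: if $x_m \in \Omega_0^{\circ}$ converged to $x \in \partial \Omega_0$ with $\psi(x_m) \to a > 0$, then $(x_m,-\psi(x_m)) \in \Sigma$ would converge to $(x,-a) \in \Sigma$, and applying Proposition \ref{proposition:3.3}(1) to this limit point (which lies in $\Sigma$ with $f < 0$) would force $x \in \Omega_0^{\circ}$, a contradiction. For the PDE, identifying $\{p \in \Sigma \mid f(p)<0\}$ with the graph $\{(x,-\psi(x)) \mid x \in \Omega_0^{\circ}\}$ in $\widehat{\Omega}$, I would compute the outward unit normal. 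Since the graph lies below the slice $t=0$, the outward normal at $(x,-\psi(x))$ is
$$
\nu = \frac{(\nabla_{\widehat{\Omega}_{-\psi(x)}} \psi,\, -1)}{\sqrt{1+(\cosh{\psi})^{-2}|\nabla_{\Omega_0} \psi|_{g|_{\Omega_0}}^2}};
$$
substituting $\nabla f = (0,\cosh t)$ at $t = -\psi(x)$ and $f = -\sinh\psi$ into $f_{\nu} = cf$ gives
$$
-\frac{\cosh{\psi}}{\sqrt{1+(\cosh{\psi})^{-2}|\nabla_{\Omega_0} \psi|_{g|_{\Omega_0}}^2}} = -c\sinh{\psi},
$$
which is exactly the stated equation. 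For part (2), the map $\Psi : \Sigma_{-} \to \Omega_0$ sending $(x,-\psi(x)) \mapsto x$ is a continuous bijection with continuous inverse $x \mapsto (x,-\psi(x))$ (using $\psi \in C(\Omega_0)$), hence $\Sigma_{-}$ is homeomorphic to $\Omega_0$, and since $\Omega_0$ is connected by Proposition \ref{proposition:3.5}, so is $\Sigma_{-}$.

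The only real subtlety here is bookkeeping of signs: the warped-product parameter $t$ is the arclength along $\nabla f/|\nabla f|$ normalized so that $f = \sinh t$, so reaching $\Sigma_{-}$ requires $t < 0$, which is why the graph is written as $t = -\psi(x)$ with $\psi \geq 0$ and why the $\partial_t$-component of $\nu$ flips sign relative to the $\Sigma_{+}$ case. Once this sign convention is fixed, the derivation of the PDE for $\psi$ and the homeomorphism $\Sigma_{-} \cong \Omega_0$ are formally identical to those in the proofs of Propositions \ref{proposition:3.4} and \ref{proposition:3.5}; no new geometric input beyond Propositions \ref{prop: 2.2}, \ref{proposition:3.3}, and \ref{proposition:3.5} is needed.
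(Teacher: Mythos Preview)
Your proposal is correct and is exactly what the paper intends: the paper itself gives no proof of Proposition~\ref{proposition:3.6}, merely prefacing it with ``Similarly, we also have,'' so mirroring the arguments of Propositions~\ref{proposition:3.4} and~\ref{proposition:3.5} with the sign change $f\mapsto -f$ is precisely the expected route. One cosmetic slip: the tangential component of your outward normal should be $-\nabla_{\widehat{\Omega}_{-\psi(x)}}\psi$ rather than $+\nabla_{\widehat{\Omega}_{-\psi(x)}}\psi$ (both components flip sign relative to the $\Sigma_{+}$ case), but since $\nabla f$ is purely in the $\partial_t$-direction this does not affect the derivation of the PDE, and your final equation is correct.
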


By Propositions \ref{proposition:3.5} and \ref{proposition:3.6}, we conclude that $\Sigma$ is connected and $0\leq \psi \leq\theta$.
Now, we only need to prove $\phi = \psi$.

\begin{proposition}
There exists a unique solution to the following equation
$$
\frac{\cosh{\phi}}{\sqrt{1+(\cosh{\phi})^{-2}|\nabla_{\Omega_0} \phi|_{g|_{\Omega_0}}^2}}=c\sinh{\phi}, \,\,\,{\text{in}} \,\,\Omega^{\circ}_0,
$$
such that 
$$
0<\phi \leq\theta, \,\,\,{\text{in}} \,\,\Omega^{\circ}_0,
$$
and
$$
\phi=0, \,\,\, {\text{on}} \,\,\partial \Omega_0.
$$
\label{proposition:3.7}
\end{proposition}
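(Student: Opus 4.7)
The plan is to deduce uniqueness from a maximum-principle argument applied after reformulating the first-order PDE in the form $|\nabla \phi|^2 = G(\phi)$, where $G$ is an explicit strictly decreasing function on $(0,\theta]$. Existence is already provided by Proposition \ref{proposition:3.4}, in which $\phi$ is constructed as the graph function of $\Sigma_{+}$ over $\Omega_0$; so the real content of the statement is uniqueness.

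\textbf{Step 1: Reduction to $|\nabla\phi|^2 = G(\phi)$.} I would square the relation
$\cosh\phi \big/ \sqrt{1+(\cosh\phi)^{-2}|\nabla_{\Omega_0}\phi|^2_{g|_{\Omega_0}}} = c\sinh\phi$ and use the identity $c^2 - 1 = 1/\sinh^2\theta$ coming from $c=\coth\theta$ to rearrange into
$$
|\nabla_{\Omega_0}\phi|^2_{g|_{\Omega_0}} \;=\; G(\phi) \;:=\; \frac{\cosh^2\phi\,(\sinh^2\theta - \sinh^2\phi)}{c^2\sinh^2\phi\,\sinh^2\theta}.
$$
Writing $G(\phi) = 1/(c^2\tanh^2\phi) - \cosh^2\phi/(c^2\sinh^2\theta)$, both summands are strictly decreasing in $\phi$, so $G$ is a strictly decreasing bijection $(0,\theta]\to[0,\infty)$.

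\textbf{Step 2: Maximum principle.} Suppose $\phi_1,\phi_2\in C^\infty(\Omega_0^\circ)\cap C(\Omega_0)$ are two solutions with $0<\phi_i\leq\theta$ in $\Omega_0^\circ$ and $\phi_i=0$ on $\partial\Omega_0$. Since $\Omega_0$ is compact by Proposition \ref{proposition:3.5}, the continuous function $\phi_1-\phi_2$ attains its maximum at some $p\in\Omega_0$. If $p\in\partial\Omega_0$, the boundary condition forces the maximum to equal $0$. If instead $p\in\Omega_0^\circ$, then $\nabla_{\Omega_0}\phi_1(p)=\nabla_{\Omega_0}\phi_2(p)$, hence $G(\phi_1(p))=G(\phi_2(p))$, and the injectivity of $G$ on $(0,\theta]$ forces $\phi_1(p)=\phi_2(p)$. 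Either way the maximum equals $0$, i.e.\ $\phi_1\leq\phi_2$ on $\Omega_0$; by symmetry $\phi_1=\phi_2$.

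\textbf{Main obstacle.} The only delicate point is Step 1, specifically verifying strict monotonicity of $G$ throughout $(0,\theta]$, with the degenerate endpoints $G(0^+)=\infty$ and $G(\theta)=0$ treated directly from the formula. This is precisely where the hypothesis $c>1$ enters quantitatively: it is what makes $\cosh^2\phi - c^2\sinh^2\phi$ vanish exactly at $\phi=\theta$ and remain positive on $(0,\theta)$. At an interior maximum $p$ with $\phi_i(p)=\theta$ the analysis is still fine, since $|\nabla\phi_i(p)|^2=G(\theta)=0$ and the injectivity of $G$ still yields $\phi_1(p)=\phi_2(p)=\theta$.
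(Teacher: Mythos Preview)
Your proposal is correct and follows essentially the same approach as the paper. The paper also rewrites the equation as $|\nabla_{\Omega_0}\phi|^2 = h(\phi)$ with $h(t)=\frac{\cosh^4 t}{c^2\sinh^2 t}-\cosh^2 t$ (your $G$), shows $h$ is strictly decreasing on $(0,\theta)$, and then argues at an interior maximum of $\phi-\psi$ via the identity $\nabla(\phi-\psi)\cdot\nabla(\phi+\psi)=h(\phi)-h(\psi)$; your version, using $\nabla\phi_1(p)=\nabla\phi_2(p)$ directly to force $G(\phi_1(p))=G(\phi_2(p))$, is a slightly cleaner phrasing of the same idea.
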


\begin{proof}
 Assume that $\phi$ and $\psi$ are the solutions and $\phi \not \equiv \psi$. Without loss of generality, we assume
 $$\mathop{\mbox{max}}\limits_{\Omega_0} |\phi-\psi|=\phi(p)-\psi(p)>0,$$
 where $p \in \Omega^{\circ}_0$. Thus 
 $$0< \psi (p) < \phi (p) \leq \theta.$$
 
 A direct calculation shows
 $$
 \nabla _{\Omega_0} (\phi - \psi) \cdot \nabla _{\Omega_0} (\phi + \psi)=\frac{\cosh^{4}{\phi}}{c^2\sinh^{2}{\phi}}-\cosh^{2}{\phi}-\frac{\cosh^{4}{\psi}}{c^2\sinh^{2}{\psi}}+\cosh^{2}{\psi}.
 $$
 It is elementary to show that the function 
 $$
 h(t)=\frac{\cosh^{4}{t}}{c^2\sinh^{2}{t}}-\cosh^{2}{t}
 $$
 is monotonically decreasing when $t \in (0,\theta) $.
 Therefore, at the point $p$, we have 
 $$
 \nabla _{\Omega_0} (\phi - \psi) \cdot \nabla _{\Omega_0} (\phi + \psi)=0
 $$
 and
 $$
 \frac{\cosh^{4}{\phi}}{c^2\sinh^{2}{\phi}}-\cosh^{2}{\phi}-\frac{\cosh^{4}{\psi}}{c^2\sinh^{2}{\psi}}+\cosh^{2}{\psi}<0,
 $$
which is a contradiction.
\end{proof}

\begin{proof5}
The second case in Proposition \ref{theorem: 1.1} follows directly from Propositions \ref{proposition:3.3} -- \ref{proposition:3.7}.
\end{proof5}

\section{The structures of $\Omega_0$ and proofs of Theorems \ref{theorem: 1.3}, \ref{theorem: 1.2} and \ref{theorem: 1.4}}

In this section, we prove Theorems \ref{theorem: 1.3}, \ref{theorem: 1.2} and \ref{theorem: 1.4}. Based on the curvature assumption ($K_1$) and Proposition \ref{theorem: 1.1}, we can directly prove Theorem \ref{theorem: 1.3}.\\

\begin{proof3}
By the curvature assumption ($K_1$) in Theorem \ref{theorem: 1.3}, Theorem 0.1 in \cite{MR3404748} and Lemma 2.1 in \cite{MR3373063},
we know that $\Omega$ is compact and $\Sigma$ is connected.
Then by Proposition \ref{theorem: 1.1} (2), we conclude that $\Omega$ is a $Z_2$-symmetric domain in the warped product space
$$
\widehat{\Omega}=\Omega_0 \times (-\infty,\infty)_{t}, \,\,g=dt^2 + (\cosh{t})^2 g|_{\Omega_0},
$$
which is bounded by the graph functions $\pm \phi$, where $\phi\in C^{\infty}(\Omega^{\circ}_0) \cap C( \Omega_0)$ satisfies 
$$
\frac{\cosh{\phi}}{\sqrt{1+(\cosh{\phi})^{-2}|\nabla_{\Omega_0} \phi|_{g|_{\Omega_0}}^2}}=c\sinh{\phi}, \,\,\,{\text{in}} \,\,\Omega^{\circ}_0,
$$
$$
\phi>0, \,\,\, {\text{in}} \,\,\Omega^{\circ}_0
$$
and
$$
\phi=0, \,\,\, {\text{on}} \,\,\partial \Omega_0.
$$
In particular, in the coordinate of $\Omega_0 \times (-\infty,\infty)_{t}$, we have $f=\sinh{t}$ and $\nabla f = (\cosh{t}) \frac{\partial}{\partial t}$.
  
Let $\widehat{\Omega}_{t}=\Omega_0 \times \{t\}$, where $t\in  (-\infty,\infty)$ (obviously, $\widehat{\Omega}_{0}=\Omega_0$). We first consider the second fundamental form with respect to $\frac{\partial}{\partial t}$, denoted by $h_{\widehat{\Omega}_{t}}$. Let $(x_1,x_2,\cdots,x_n)$ be a coordinate in $\Omega_0$, then the metric has the form
$$
g=dt^2 + (\cosh{t})^2 g_{ij}dx^i dx^j,
$$
where $g|_{\Omega_0}=g_{ij}dx^i dx^j$.
A direct calculation shows that
$$
\begin{aligned}
h_{\widehat{\Omega}_{t}} (\frac{\partial }{\partial x_i},\frac{\partial }{\partial x_j})
&= -\langle \nabla^{\widehat{\Omega}}_{\frac{\partial }{\partial x_i}} \frac{\partial }{\partial x_j} , \frac{\partial }{\partial t}\rangle\\ 
&= -\Gamma_{ij}^t\\
&= \frac{\sinh t}{\cosh t}\langle\frac{\partial }{\partial x_i}, \frac{\partial }{\partial x_j}\rangle,
\end{aligned}
$$
which implies that
$$
h_{\widehat{\Omega}_{t}} = \frac{\sinh t}{\cosh t}g|_{\widehat{\Omega}_{t}}.
$$
Therefore, $\Omega_0$ is a totally geodesic hypersurface in $\widehat{\Omega}$ 
with unit normal $\frac{\partial}{\partial t}$.
    
Clearly, the graph function $\phi$ has the following properties.
First, $\phi \in  [0, \tanh^{-1}{(\frac{1}{c})}]$ and $\phi|_{\partial \Omega_0}=0$. Second, the set $\{x \in  \Omega_0~|~ \phi(x)=\tanh^{-1}{(\frac{1}{c})}\}$ (denoted by $A$) is a compact subset in $\Omega_0$.

Define 
$$
v=\tanh^{-1}({\frac{1}{c}\sqrt{1-(c^2-1)\sinh^2{\phi}}}).
$$
Then $v$ also has the following properties. First, $v \in  [0, \tanh^{-1}{(\frac{1}{c})}]$ and $v|_{\partial \Omega_0}=\tanh^{-1}({\frac{1}{c}})$. Second, $v$ is smooth at any $x$ with $v(x) \in \, (0, \tanh^{-1}{(\frac{1}{c})})$. Third, $\{x\in  \Omega_0 ~|~ v(x)=0\}=A$. In addition, a direct calculation shows that $|\nabla _{\Omega_0} v|\equiv 1$ on $\Omega_0- (\partial \Omega_0 \cup A)$. 

We now consider the level set of $v$. Set
$$
T_t=\{x\in \Omega_0~|~ v(x)=t\},
$$
where $t \in  [0,\tanh^{-1}({\frac{1}{c}})]$. Obviously, $T_0= A$ and $T_{\tanh^{-1}({\frac{1}{c}})}=\partial\Omega_0$.
We now prove
$$
t\leq d(A,T_t),
$$
where $d$ is the distance function in $\Omega_0$.
Fix $t \in (0,\tanh^{-1}({\frac{1}{c}})]$. Let $\widehat{\gamma}:[0,l] \rightarrow \Omega_0$ be a minimizing geodesic realizing the distance $d(A,T_t)$ with arc length parameter such that $\widehat{\gamma}(0)\in A$, $\widehat{\gamma}(l)\in T_t$, and $l=d(A,T_t)$. We then have
$$
\begin{aligned}
t
&= \lim_{\epsilon \rightarrow 0^+} v(\widehat{\gamma}(s))|_{\epsilon}^{l-\epsilon}\\ 
&= \lim_{\epsilon \rightarrow 0^+} \int_{\epsilon}^{l-\epsilon} \frac{d}{ds} (v(\widehat{\gamma}(s)))ds\\
&= \lim_{\epsilon \rightarrow 0^+} \int_{\epsilon}^{l-\epsilon} \langle \nabla_{\Omega_0} v, \gamma^{\prime} \rangle ds\\
&\leq l\\
&=d(A,T_t),
\end{aligned}
$$
where $\langle \nabla_{\Omega_0} v, \gamma^{\prime} \rangle \leq |\nabla _{\Omega_0} v|=1$. Therefore, we conclude that $t\leq d(A,T_t)$. In particular, $\tanh^{-1}({\frac{1}{c}}) \leq d(A,T_{\tanh^{-1}({\frac{1}{c}})}) = d(A,\partial \Omega_0)$.
        
Now, we prove that $\Omega_0$ is a geodesic ball
of radius $\tanh^{-1} {(\frac{1}{c})}$ in the hyperbolic space $\mathbb{H}^{n}$ by the curvature assumptions.

First, let $e_1=\frac{\partial}{\partial t}$ and $\{e_2,e_3,\cdots,e_{n+1}\}$ be an orthonormal frame in $T\Omega_0$, then we know that $\{e_1,e_2,\cdots,e_{n+1}\}$ is an orthonormal frame in $T\widehat{\Omega}$.  Since $\Omega_0$ is totally geodesic, by the Gauss equation,
we have 
$$
R^{\Omega_0}(e_i,e_j,e_k,e_l)=R^{\widehat{\Omega}}(e_i,e_j,e_k,e_l),
$$
where $i,j,k,l \in \, \{2,3,\cdots,n+1\}$.
In addition, by the Ricci identity, we obtain
$$
\begin{aligned}
f_{k}\delta_{ij}-f_{j}\delta_{ik}
&= f_{ijk}-f_{ikj}\\
&= -\sum_{p=1}^{n+1}R^{\widehat{\Omega}}(e_p,e_i,e_j,e_k)f_p\\ 
&= -R^{\widehat{\Omega}}(e_1,e_i,e_j,e_k)f_1,
\end{aligned}
$$
which implies
$$
R^{\widehat{\Omega}}(e_1,e_j,e_1,e_j)=1,
$$
where $j\in\,\{2,3,\cdots,n+1\}$.
Therefore, we can deduce
$$
\begin{aligned}
\mbox{Ric}^{\Omega_0}(e_i,e_i)
&= -\sum_{j=2,j\not=i}^{n+1} R^{\Omega_0}(e_i,e_j,e_i,e_j)\\
&= -\sum_{j=2,j\not=i}^{n+1}R^{\widehat{\Omega}}(e_i,e_j,e_i,e_j)\\ 
&= -\sum_{j=1,j\not=i}^{n+1}R^{\widehat{\Omega}}(e_i,e_j,e_i,e_j)+1\\
&\geq -(n-1).
\end{aligned}$$
        
Next, we prove that the second fundamental form $h_{\partial \Omega_0}$ of $\partial \Omega_0$ in $\Omega_0$ satisfies $h_{\partial \Omega_0}=h|_{\partial \Omega_0}$, where $h$ is the the second fundamental form of $\Sigma$ in $\Omega$.  In fact, $\forall \, P \in \, \partial \Omega_0$, we have $f_{\nu}(P)=cf(P)=0$, that is, $\langle \nabla f ,\nu \rangle (P)=0$, and we know that $\nabla f(P) \in \,T_P \Sigma$. Let $\{e_1,e_2,\cdots,e_{n}\}$ be an orthonormal frame in $T\Sigma$ near the point $P$ such that $e_1(P)=\nabla f(P)$. Since $\Omega_0$ is totally geodesic with constant unit normal $\frac{\partial}{\partial t}$ and $\nabla f(P)= \frac{\partial}{\partial t}(P)$, we know that $\{e_2,e_3,\cdots,e_{n}\}$ is an orthonormal frame for $\partial \Omega_0$ at the point $P$ and $\nu$ is the unit outward normal of $\partial \Omega_0$ in $\Omega_0$. In addition, for $i \in \, \{2,3,\cdots,n\}$ we have 
$$
0=\nabla ^2 f (e_i,\nu)=e_i(\nu f)-\nabla^{\widehat{\Omega}}_{e_i}\nu f=cf_i-\sum_{j=1}^{n} h_{ij}f_j,
$$
which implies that $h_{11}=c$ and $h_{1i}=0$ for $i \in \{2,3,\cdots,n\}$. So $e_1$ is a principal
direction of $\Sigma$ at $P$ corresponding to the principal curvature $c$.
Now, let $\{e_2,e_3,\cdots,e_{n}\}$ be an orthonormal frame in $T\partial \Omega_0$, since $\Omega_0$ is totally geodesic with constant unit normal $\nabla f$, we then have
$$
h_{\partial \Omega_0}(e_i,e_j)=-\langle \nabla ^{\Omega_0} _{e_i} e_j, \nu \rangle=-\langle \nabla ^{\widehat{\Omega}} _{e_i} e_j, \nu \rangle=h(e_i,e_j),
$$
where $i,j \in\, \{2,3,\cdots,n\}$. Therefore, we conclude that $h_{\partial \Omega_0}=h|_{\partial \Omega_0}$ and we know that the mean curvature of $\partial \Omega_0$ in $\Omega_0$, denoted by $H_{\partial \Omega_0}$, satisfies 
\begin{align*}
H_{\partial \Omega_0}=\frac{1}{n-1} \mbox{tr}_{g_{\partial \Omega_0}} h_{\partial \Omega_0}=&\frac{1}{n-1} \mbox{tr}_{g_{\partial \Omega_0}} h|_{\partial \Omega_0}\\
\geq& \frac{1}{n-1} (nc-c)\\
=&c.
\end{align*}
Thus, by the fact that $\tanh^{-1}({\frac{1}{c}}) \leq d(A,\partial \Omega_0)$ and Theorem 0.3 in \cite{MR3404748}, we conclude that $\Omega_0$ is isometric to a geodesic ball
of radius $\tanh^{-1} {(\frac{1}{c})}$ in the hyperbolic space $\mathbb{H}^{n}$ and $A$ consists of a single point $x_0$, which is the center of $\Omega_0$. For convenience, we just assume that $\Omega_0$ is the geodesic ball in $\mathbb{H}^{n}$. Then we know that $\widehat{\Omega}$ is a domain in the hyperbolic space $\mathbb{H}^{n+1}$, and so do $\Omega$.

Now, let $t \in \, (0,\tanh^{-1}{(\frac{1}{c})})$, we prove that $T_{t}=S_{t}$, where $S_t$ is the geodesic sphere in $\Omega_0$ of
radius $t$ centered at $x_0$. On the one hand, we have known that $t\leq d(x_0,T_t)$. On the other hand, we can use the similar method discussed above to prove that $\tanh^{-1}{(\frac{1}{c})}-t \leq d(T_t,\partial \Omega_0)$, and we then have $T_{t}=S_{t}$, that is,
$$v(x)=d(x,x_0).$$
Since 
$$
v=\tanh^{-1}({\frac{1}{c}\sqrt{1-(c^2-1)\sinh^2{\phi}}}),
$$
we have
$$\frac{\tanh^2{d(x,x_0)}}{\tanh^2{\theta
}}+\frac{\sinh^2{\phi(x)}}{\sinh^2{\theta}}=1,$$
where $\theta=\tanh^{-1}(\frac{1}{c}).$
By Proposition \ref{prop:2.3}, we know that $\Sigma$ is a geodesic sphere in the hyperbolic space $\mathbb{H}^{n+1}$ of
radius $\tanh^{-1}(\frac{1}{c})$ centered at $x_0$ and thus $\Omega$ is a geodesic ball
of radius $\tanh^{-1} {(\frac{1}{c})}$ in the hyperbolic space $\mathbb{H}^{n+1}$.
\end{proof3}

Now we concentrate on the proof of Theorem \ref{theorem: 1.2}. We first show that Theorem \ref{theorem: 1.2} corresponds to the first case in Proposition \ref{theorem: 1.1} by the assumption ($K_2$).

\begin{proposition}
Let $(\Omega^{n+1},g)$ and $f$ be as in Theorem \ref{theorem: 1.2}. Then $f$ is constant on any boundary component. 
\label{prop:4.0.5}
\end{proposition}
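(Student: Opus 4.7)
My plan is to argue by contradiction, assuming $f$ is non-constant on some boundary component. Under this assumption, the dichotomy in Proposition~\ref{theorem: 1.1} forces us into case~(2): $\Sigma$ is connected (so it is exactly the component $S$ referred to in $(K_{2})$), and $\Omega_{0}=f^{-1}(0)$ is a compact manifold whose boundary $\partial\Omega_{0}=\Sigma\cap\Omega_{0}$ is non-empty. The latter is automatic in case~(2): in the graph description of Propositions~\ref{proposition:3.4} and~\ref{proposition:3.6}, $\partial\Omega_{0}$ is exactly the common vanishing set $\{\phi=0\}=\{\psi=0\}$ where the two graphs meet, and a connected $\Sigma$ requires this intersection to be non-empty.

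The key step is then a one-point computation at an arbitrary $P\in\partial\Omega_{0}$, essentially the same local calculation already carried out at the end of the proof of Proposition~\ref{proposition:3.3} and reused in the proof of Theorem~\ref{theorem: 1.3}. Since $f(P)=0$, the Robin condition yields $f_{\nu}(P)=0$, so $\nabla f(P)$ is tangent to $\Sigma$; by Proposition~\ref{prop: 2.1}, $|\nabla f(P)|^{2}=f(P)^{2}+1=1$. Setting $e_{1}=\nabla f(P)$, a unit vector in $T_{P}\Sigma$, the equation $\nabla^{2}f=fg$ combined with $f(P)=0$ gives $\nabla^{2}f(e_{1},\nu)|_{P}=0$. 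Expanding the Hessian as
\[
\nabla^{2}f(e_{1},\nu)=e_{1}(\nu f)-(\nabla^{\Omega}_{e_{1}}\nu)f,
\]
differentiating $\nu f=cf$ along the tangential direction $e_{1}$ to get $e_{1}(\nu f)=c\,e_{1}f$, and using $(\nabla^{\Omega}_{e_{1}}\nu)f=\langle\nabla^{\Omega}_{e_{1}}\nu,\nabla f\rangle=h(e_{1},e_{1})$ at $P$ (with $e_{1}f=\langle e_{1},\nabla f\rangle=1$), I expect to obtain
\[
0=c-h(e_{1},e_{1})|_{P},
\]
i.e., $h(e_{1},e_{1})|_{P}=c$. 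This directly contradicts the bound $h<cg|_{S}$ in $(K_{2})$, which requires $h(e_{1},e_{1})<c$ for every unit tangent vector $e_{1}$ of $S$.

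Once this contradiction is produced, $f$ must be constant on some boundary component; case~(1) of Proposition~\ref{theorem: 1.1} then applies, and the warped product structure there transmits the constancy of $f$ to every component of $\Sigma$. There is essentially no serious obstacle in this argument beyond making sure at the outset that case~(2) of Proposition~\ref{theorem: 1.1} provides a genuine point $P\in\partial\Omega_{0}$ at which to anchor the local computation. I also note that neither the diameter lower bound $\sqrt{c^{2}/(c^{2}-1)}\,\pi$ nor the partial-Ricci-type inequality $-\sum_{j=2}^{k}R^{\Omega}(e_{1},e_{j},e_{1},e_{j})\geq(k-1)(1-2/c^{2})$ of $(K_{2})$ is invoked at this stage; those hypotheses only come in afterwards, to pin down the isometry type of each boundary component as $\mathbb{S}^{n}(\sqrt{c^{2}/(c^{2}-1)})$ in the main proof of Theorem~\ref{theorem: 1.2}.
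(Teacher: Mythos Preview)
Your argument is correct and follows essentially the same route as the paper: assume $f$ is non-constant on every boundary component so that case~(2) of Proposition~\ref{theorem: 1.1} applies, then at any $P\in\partial\Omega_{0}\subset\Sigma$ carry out the Hessian computation (already appearing in the proofs of Proposition~\ref{proposition:3.3} and Theorem~\ref{theorem: 1.3}) to obtain $h(e_{1},e_{1})|_{P}=c$, contradicting $h<cg|_{S}$. Your write-up is in fact slightly more explicit than the paper's, which simply cites ``the proof of Theorem~\ref{theorem: 1.3}'' for the principal-curvature conclusion and leaves implicit why constancy on one component propagates to all.
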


\begin{proof}
    In fact, we only need to show that there exists a boundary component $S$ such that $f|_{S}$ is constant. Assume that $f$ is non-constant on any boundary component, then we know that $\Sigma$ is connected. From the proof of Theorem \ref{theorem: 1.3}, we also conclude that for any $P \in \Sigma$ with $f(P)=0$, $\nabla f(P)$ is a principal direction corresponding to the principal curvature $c$, which is a contradiction (since we have $h<cg|_{\Sigma}$).
\end{proof}

By Proposition \ref{prop:4.0.5}, we know that Theorem \ref{theorem: 1.2} corresponds to the first case in Proposition \ref{theorem: 1.1} and $\Omega$ is isometirc to the warped product space

$$
\Omega_0 \times [-\theta,\infty)_{t},\,\,\,g=dt^2+(\cosh{t})^2 g_{|{\Omega_0}}
$$
or
$$
\Omega_0 \times [-\theta,\theta]_{t},\,\,\,g=dt^2+(\cosh{t})^2 g_{|{\Omega_0}}.
$$

Now we can consider the structure of $\Omega_0$.

\begin{proposition}
Let $(\Omega^{n+1},g)$ and $f$ be as in Theorem \ref{theorem: 1.2}. Let $S$ be the boundary component which satisfies the assumption ($K_2$). Then we have
$$
\mbox{Ric}^{S}\geq (n-1)(1-\frac{1}{c^2})>0.
$$
\label{prop:4.1}
\end{proposition}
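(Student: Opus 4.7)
The plan is to combine Proposition \ref{prop:4.0.5} with the Gauss equation for the hypersurface $S\subset\Omega$. Proposition \ref{prop:4.0.5} places us in the first case of Proposition \ref{theorem: 1.1}; combined with the warped product description in Propositions \ref{proposition:3.2} and \ref{proposition:3.2.1}, the second fundamental form of $S$ turns out to be the umbilic tensor $h=\tfrac{1}{c}g|_S$. With this explicit $h$, the Gauss equation then reduces the intrinsic Ricci curvature of $S$ to the ambient curvature quantity controlled by $(K_2)$.

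Concretely, since $f|_S$ is the nonzero constant $\pm\sinh\theta$, we have $\nabla^S f\equiv 0$ on $S$. Restricting $\nabla^2 f = fg$ to $X,Y\in T_pS$ and using the decomposition $\nabla^\Omega_X Y=\nabla^S_X Y-h(X,Y)\nu$ together with $Yf=0$ on $S$ and $\nu f=cf$ yields
\[
f\,g(X,Y)=\nabla^2 f(X,Y) = c\,f\cdot h(X,Y),
\]
and dividing by $f\neq 0$ gives $h=\tfrac{1}{c}g|_S$. (Equivalently, this can be read off from the warped product metric in Propositions \ref{proposition:3.2} and \ref{proposition:3.2.1} via $c=\coth\theta$, i.e.\ $\tanh\theta=1/c$.)

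Next, fix an orthonormal basis $\{e_1,\ldots,e_n\}$ of $T_pS$. The Gauss equation in the paper's convention reads
\[
R^S(e_1,e_j,e_1,e_j)=R^\Omega(e_1,e_j,e_1,e_j)+h(e_1,e_j)^2-h(e_1,e_1)h(e_j,e_j),
\]
which with the umbilic $h=\tfrac{1}{c}g|_S$ collapses, for $j\neq 1$, to $R^S(e_1,e_j,e_1,e_j)=R^\Omega(e_1,e_j,e_1,e_j)-\tfrac{1}{c^2}$. Summing over $j=2,\ldots,n$ and using the paper's Ricci convention $\mbox{Ric}^S(e_1,e_1)=-\sum_{j=2}^n R^S(e_1,e_j,e_1,e_j)$ (cf.\ the proof of Theorem \ref{theorem: 1.3}) gives
\[
\mbox{Ric}^S(e_1,e_1)=-\sum_{j=2}^n R^\Omega(e_1,e_j,e_1,e_j)+\tfrac{n-1}{c^2}.
\]
Bounding the first sum below by $(n-1)(1-\tfrac{2}{c^2})$ via assumption $(K_2)$ with $k=n$ then yields the desired $(n-1)(1-\tfrac{1}{c^2})>0$ since $c>1$.

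The only real obstacle is keeping sign conventions straight (for the Gauss equation and for the Ricci contraction in the paper's $R^\Omega(X,Y,Z,W)$ convention); once those are consistently pinned down, the argument reduces to the clean numerical identity $(1-\tfrac{2}{c^2})+\tfrac{1}{c^2}=1-\tfrac{1}{c^2}$, where the ambient bound from $(K_2)$ and the umbilic correction from the Gauss equation combine to produce exactly the stated lower bound.
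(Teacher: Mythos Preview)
Your derivation of $h=\tfrac{1}{c}\,g|_S$ and the use of the Gauss equation are correct and match the paper. The gap is in the final step: assumption $(K_2)$ is formulated for a \emph{fixed} $k\in\{2,\ldots,n\}$, not necessarily $k=n$. You invoke $(K_2)$ ``with $k=n$'', but if the hypothesis only supplies the partial-sum bound
\[
-\sum_{j=2}^{k} R^{\Omega}(e_1,e_j,e_1,e_j) \geq (k-1)\Bigl(1-\tfrac{2}{c^2}\Bigr)
\]
for some $k<n$, you cannot directly conclude the full $(n-1)$-term bound $-\sum_{j=2}^n R^\Omega(e_1,e_j,e_1,e_j)\geq (n-1)(1-\tfrac{2}{c^2})$.

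The paper closes this gap by a symmetrization (averaging) argument: after passing to $R^S$ via Gauss, apply $(K_2)$ to every $(k-1)$-element subset $\{e_{i_1},\ldots,e_{i_{k-1}}\}\subset\{e_2,\ldots,e_n\}$, obtaining $\binom{n-1}{k-1}$ inequalities of the form $\sum_{j=1}^{k-1}\bigl(-R^S(e_1,e_{i_j},e_1,e_{i_j})\bigr)\geq (k-1)(1-\tfrac{1}{c^2})$. Summing all of these, each index $j\in\{2,\ldots,n\}$ appears in exactly $\binom{n-2}{k-2}$ subsets, so
\[
\binom{n-2}{k-2}\sum_{j=2}^n\bigl(-R^S(e_1,e_j,e_1,e_j)\bigr)\ \geq\ \binom{n-1}{k-1}(k-1)\Bigl(1-\tfrac{1}{c^2}\Bigr).
\]
Using $\binom{n-1}{k-1}(k-1)=(n-1)\binom{n-2}{k-2}$ and the Ricci convention then gives $\mbox{Ric}^S(e_1,e_1)\geq(n-1)(1-\tfrac{1}{c^2})$. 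Your argument is exactly the special case $k=n$ of this; to cover the proposition as stated you need to add this averaging step.
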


\begin{proof}
Take an orthonormal frame $\{e_1,e_2,\cdots,e_n\}$ in $TS$. By Gauss equation, for $i=2,\cdots,n$, we have
$$
\begin{aligned}
R^{S}(e_1,e_i,e_1,e_i)
&= R^{\Omega}(e_1,e_i,e_1,e_i)-h(e_1,e_1) h(e_i,e_i) +  h(e_1,e_i) h(e_1,e_i).
\end{aligned}
$$
A direct caculation shows that $h=\frac{1}{c}g|_{S}$, we then know 
$$
R^{S}(e_1,e_i,e_1,e_i)
= R^{\Omega}(e_1,e_i,e_1,e_i)-\frac{1}{c^2}.
$$
Since $S$ satisfies the assumption (K), for any subset 
$\{i_1,i_2,\cdots,i_{k-1}\} \subset \{2,\cdots,n\}$, 
we have
$$
\begin{aligned}
\sum_{j=1}^{k-1} -R^{S}(e_1,e_{i_j},e_1,e_{i_j})
&\geq (k-1)(1-\frac{2}{c^2}) + (k-1)\frac{1}{c^2}\\
&=(k-1)(1-\frac{1}{c^2}).
\end{aligned}
$$
Obviously, the number of the inequalities discussed above is $C^{k-1}_{n-1}$. By summing up all these inequalities, we have 
$$
\begin{aligned}
\sum_{j=2}^{n} -C^{k-2}_{n-2}R^{S}(e_1,e_{j},e_1,e_{j})
&\geq C^{k-1}_{n-1}(k-1)(1-\frac{1}{c^2}),
\end{aligned}
$$
that is, 
$$
\begin{aligned}
\mbox{Ric}^{S}(e_1,e_1)
&\geq(n-1)(1-\frac{1}{c^2})>0.
\end{aligned}
$$
\end{proof}

\begin{proposition}
Let $S$ be the boundary component discussed above. If we further assume that the diameter $d$ of $S$ satisfies
$$
d\geq \frac{c}{\sqrt{c^2-1}} \pi,
$$
then $\Omega_0$ is isometric to the standard 
sphere $\mathbb{S}^n$.
\label{prop:4.2}
\end{proposition}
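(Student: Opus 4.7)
The plan is to reduce the statement to an application of Cheng's maximal diameter sphere theorem on $\Omega_0$. Because of Proposition \ref{prop:4.0.5}, we are in the situation of Propositions \ref{proposition:3.2}/\ref{proposition:3.2.1}, so $\Omega$ carries the warped product metric $g = dt^2 + \tfrac{\cosh^2(t-\theta)}{\cosh^2\theta}\,g|_S$ and $\Omega_0 = \{f=0\}$ corresponds to the slice $t=\theta$. Reading off the induced metric at that slice, and using $\cosh^2\theta = c^2/(c^2-1)$ (from $c=\coth\theta$), I would first record
$$
g|_{\Omega_0} = \frac{1}{\cosh^2\theta}\, g|_S = \frac{c^2-1}{c^2}\, g|_S,
$$
so that $(\Omega_0, g|_{\Omega_0})$ and $(S, g|_S)$ are related by a global homothety.

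Next I would transfer the curvature and diameter information from $S$ to $\Omega_0$ through this homothety. Since the Ricci tensor is invariant as a $(0,2)$-tensor under constant rescaling, combining the homothety with Proposition \ref{prop:4.1} gives
$$
\mbox{Ric}^{\Omega_0} = \mbox{Ric}^S \geq (n-1)\Bigl(1-\tfrac{1}{c^2}\Bigr) g|_S = (n-1)\cdot\tfrac{c^2-1}{c^2}\cdot\tfrac{c^2}{c^2-1}\, g|_{\Omega_0} = (n-1)\, g|_{\Omega_0}.
$$
The diameter transforms by the scaling factor $\tfrac{\sqrt{c^2-1}}{c}$, so the hypothesis $d \geq \tfrac{c}{\sqrt{c^2-1}}\pi$ yields
$$
\mbox{diam}(\Omega_0) = \tfrac{\sqrt{c^2-1}}{c}\, d \geq \pi.
$$
Since $\Omega_0$ is a closed, connected $n$-manifold (Proposition \ref{theorem: 1.1}(1)), the Bonnet--Myers theorem gives the reverse inequality $\mbox{diam}(\Omega_0) \leq \pi$, so equality holds.

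Finally, I would invoke Cheng's maximal diameter rigidity theorem: a complete $n$-manifold with $\mbox{Ric}\geq (n-1)g$ and diameter exactly $\pi$ is isometric to the standard unit sphere $\mathbb{S}^n$. This gives $\Omega_0 \cong \mathbb{S}^n$ and completes the proof. I do not anticipate a serious obstacle here; the whole argument is essentially bookkeeping of a constant conformal factor, with Proposition \ref{prop:4.1} and the warped product structure doing the heavy lifting. The only point that requires care is the precise identity $\cosh^2\theta = c^2/(c^2-1)$, which must be used consistently when scaling both the Ricci bound and the diameter in order to land exactly at the Cheng threshold.
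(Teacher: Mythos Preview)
Your proof is correct and follows essentially the same approach as the paper: both arguments rest on the homothety $g|_{\Omega_0}=\cosh^{-2}\theta\,g|_S$, the Ricci lower bound from Proposition~\ref{prop:4.1}, and the Bonnet--Myers/Cheng maximal diameter theorem. The only cosmetic difference is the order of operations: the paper applies Bonnet--Myers and Cheng directly to $S$ (concluding that $S$ is a round sphere of radius $\cosh\theta$) and then rescales to identify $\Omega_0$, whereas you first rescale to $\Omega_0$ and then apply Cheng there; the two are equivalent.
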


\begin{proof}
By Bonnet-Myers theorem and Cheng's theorem, we conclude that $S$ is isometric to a sphere of radius $\cosh{\theta}$ ($c=\coth \theta$). Since $\Omega_0$ is conformal to $S$, then a direct calculation shows that $\Omega_0$ is isometric to the
standard sphere $\mathbb{S}^n$.
\end{proof}

\begin{proof2}
Theorem \ref{theorem: 1.2} follows from Proposition \ref{theorem: 1.1} and Propositions \ref{prop:4.0.5}, \ref{prop:4.1} and \ref{prop:4.2}.
\end{proof2}

Next, we prove Theorem \ref{theorem: 1.4}. In this case, we consider the following Obata type equation
\begin{equation*}
\begin{cases}
\begin{aligned}
&\nabla^2 f+ fg=0,& {\text{in}} \,\Omega,\\
&f_{\nu}-cf=0, &{\text{on}} \, \Sigma,
\end{aligned}
\end{cases}
\end{equation*}
where $c$ is a positive constant. For convenience, 
we still assume that
$$
|\nabla f|^2+f^2=1.
$$
By Theorem 0.1 in \cite{MR3404748} and the curvature assumptions in Theorem \ref{theorem: 1.4}, we know that $\Omega$ is compact.
Set $\Omega_0=\{p \in \Omega~|~ f(p)=0\}$, by Theorem 1.3 in \cite{chen2020obata} and the curvature assumptions in Theorem \ref{theorem: 1.4}, we conclude that $\Sigma$ is connected (see also \cite{ichida1981riemannian})
and $\Omega$ is a $Z_2$-symmetric domain in the warped product space
$$
\widehat{\Omega}=\Omega_0 \times (-\frac{\pi}{2},\frac{\pi}{2})_{t}, \,\,g=dt^2 + (\cos{t})^2 g|_{\Omega_0},
$$
which is bounded by the graph functions $\pm \phi$, where $\phi \in \, C^{\infty}(\Omega^{\circ}_0) \cap C(\Omega_0)$ satisfies 
$$
\frac{\cos{\phi}}{\sqrt{1+(\cos{\phi})^{-2}|\nabla_{\Omega_0} \phi|_{g|_{\Omega_0}}^2}}=c\sin{\phi}, \,\,\,{\text{in}} \,\,\Omega^{\circ}_0,
$$
$$
\phi>0, \,\,\, {\text{in}} \,\,\Omega^{\circ}_0,
$$
and
$$
\phi=0, \,\,\, {\text{on}} \,\,\partial \Omega_0.
$$
In particular, $f=\sin{t}$.

Let $\widehat{\Omega}_{t}=\Omega_0 \times \{t\}$, where $t\in  (-\frac{\pi}{2},\frac{\pi}{2})$. We first consider the second fundamental form with respect to $\frac{\partial}{\partial t}$, denoted by $h_{\widehat{\Omega}_{t}}$. By a similar method in the proof of Theorem \ref{theorem: 1.3}, we have
$$
h_{\widehat{\Omega}_{t}} = -\frac{\sin{t}}{\cos{t}}g|_{\widehat{\Omega}_{t}}.
$$
Therefore, $h_{\widehat{\Omega}_{t}}$ is negative definite when $t>0$ and $\Omega_0$ is a totally geodesic hypersurface in $\widehat{\Omega}$ with unit normal $\frac{\partial}{\partial t}$.

As for the graph function $\phi$. Obviously, $\phi \in \, [0, \tan^{-1}{(\frac{1}{c})}]$ and $\phi|_{\partial \Omega_0}=0$. Here we can provide another way to prove that the set $\{x \in \, \Omega_0~|~ \phi(x)=\tan^{-1}{(\frac{1}{c})}\}$ consists of a single point, denoted by $x_0$, which is different from the method in Theorem \ref{theorem: 1.3}.  To see this, we first prove Lemma \ref{lemma:4.3}. The idea of this lemma comes from Lemma 2.1 in \cite{do2001rigidity}. 

\begin{lemma}
Let $\Omega$ and f be as in Theorem \ref{theorem: 1.4}. 
Assume that $p\in\Sigma$ is a critical point of $f|_{\Sigma}$, then at the point $p$, the second fundamental form $h$ satisfies
$$
h=cg|_{\Sigma}.
$$
\label{lemma:4.3}
\end{lemma}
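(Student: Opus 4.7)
The plan is to combine the first integral $|\nabla f|^2+f^2=1$ (the analogue for $\nabla^2 f+fg=0$ of Proposition~\ref{prop: 2.1}, with the conserved constant normalized to $1$ as at the start of the proof of Theorem~\ref{theorem: 1.4}) with the standard Gauss-type decomposition of the ambient Hessian along $\Sigma$, and then to exploit the mean-curvature assumption $H\ge c$ of Theorem~\ref{theorem: 1.4}. Splitting $\nabla f$ into its tangential and normal parts on $\Sigma$ and using $f_\nu=cf$, one first obtains the boundary identity
\begin{equation*}
|\nabla_\Sigma f|^2+(1+c^2)f^2=1\quad\text{on }\Sigma,
\end{equation*}
so at a critical point $p$ of $f|_\Sigma$ one reads off $f(p)^2=1/(1+c^2)$; in particular $f(p)\ne 0$, which legitimizes the divisions below.

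For tangent vectors $X,Y$ to $\Sigma$, the Gauss decomposition
\begin{equation*}
\nabla^2 f(X,Y)=\nabla_\Sigma^2(f|_\Sigma)(X,Y)+f_\nu\, h(X,Y),
\end{equation*}
combined with $\nabla^2 f=-fg$, yields the pointwise identity
\begin{equation*}
\nabla_\Sigma^2(f|_\Sigma)+cf\,h=-f\,g|_\Sigma\quad\text{on }\Sigma.
\end{equation*}
Thus, writing $A:=\nabla_\Sigma^2(f|_\Sigma)|_p$ as a self-adjoint endomorphism of $T_p\Sigma$, it suffices to identify $A$. I would do this by taking the Hessian of the boundary first integral at $p$: since $\nabla_\Sigma f(p)=0$, the Hessian of $|\nabla_\Sigma f|^2$ collapses at $p$ to $2\langle A^2X,Y\rangle$, while that of $f^2$ reduces to $2f(p)\langle AX,Y\rangle$, and one arrives at the operator identity
\begin{equation*}
A^2+(1+c^2)f(p)\,A=0,
\end{equation*}
so the eigenvalues of $A$ lie in $\{0,\,-(1+c^2)f(p)\}$.

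To pin down $k:=\dim\ker A$, I would compute $\Delta_\Sigma f(p)=\mathrm{tr}\,A$ in two ways. From the eigenvalue structure, $\mathrm{tr}\,A=-(1+c^2)f(p)(n-k)$. From the standard identity $\Delta_\Sigma f=\Delta_\Omega f-f_{\nu\nu}-nH f_\nu$ together with $\Delta_\Omega f=-(n+1)f$, $f_{\nu\nu}=-f$ and $f_\nu=cf$, one instead obtains $\Delta_\Sigma f(p)=-nf(p)(1+cH(p))$. Equating these and dividing by $-nf(p)\ne 0$ yields
\begin{equation*}
H(p)=c-\frac{(1+c^2)k}{nc},
\end{equation*}
so the hypothesis $H\ge c$ forces $k=0$. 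Hence $A=-(1+c^2)f(p)\,g|_\Sigma$, and substituting back into the pointwise Gauss identity above gives $h=c\,g|_\Sigma$ at $p$.

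The main obstacle I expect is the second-derivative bookkeeping for the boundary first integral: at a critical point where $A$ is only a priori a symmetric endomorphism, one must verify carefully that the only surviving contribution to $\mathrm{Hess}(|\nabla_\Sigma f|^2)|_p$ is $2\langle A^2X,Y\rangle$, with all other terms carrying a factor of $\nabla_\Sigma f(p)=0$. Once that operator identity is in place, the two-trace comparison combined with $H\ge c$ cleanly closes the argument.
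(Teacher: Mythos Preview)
Your argument is correct, and the overall scaffolding (the boundary first integral, $f(p)\neq 0$, and the Gauss decomposition $\nabla_\Sigma^2 z = -(ch+g|_\Sigma)z$) matches the paper's. The difference lies in the closing step. The paper does not take the Hessian of the first integral; instead it uses the mixed component $\nabla^2 f(e_i,\nu)=0$ to obtain $c\,z_i=\sum_j h_{ij}z_j$ on $\Sigma$, differentiates this once more, and evaluates at $p$ (where $z_i=0$) to get $c\,z_{ik}=\sum_j h_{ij}z_{jk}$. Substituting $z_{ij}=-(ch_{ij}+\delta_{ij})z$ yields a quadratic directly in the shape operator,
\[
(c\,h(p)+I)(h(p)-cI)=0,
\]
so the eigenvalues of $h(p)$ lie in $\{c,-1/c\}$, and $H\ge c$ forces them all to equal $c$.

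By contrast, you obtain a quadratic in the intrinsic Hessian $A$ via the second derivative of $|\nabla_\Sigma z|^2+(1+c^2)z^2=1$, then use a trace comparison with $\Delta_\Sigma f=\Delta_\Omega f - f_{\nu\nu}-nHf_\nu$ to pin down $\ker A$, and finally back-substitute to recover $h$. Both routes are short; the paper's is slightly more direct since it lands on a factored equation in $h$ itself and bypasses the trace identity, while yours uses only intrinsic boundary data plus the scalar Laplacian relation and never touches the tangential-normal Hessian component. Your worry about the bookkeeping in $\mathrm{Hess}(|\nabla_\Sigma f|^2)|_p$ is unfounded: in local coordinates the only term without a factor of $z_i(p)=0$ is $2g^{jl}z_{jk}z_{lm}$, which is exactly $2\langle A^2\cdot,\cdot\rangle$.
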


\begin{proof}
For convenience, let $z=f|_{\Sigma}$. Since $|\nabla f|^2+f^2=1$ and $f_{\nu}=cf$, on $\Sigma$, we have
$$
1=|\nabla f|^2+f^2=|\nabla_{\Sigma} z|^2+(f_{\nu})^2+z^2=|\nabla_{\Sigma} z|^2+(c^2+1)z^2
$$
and we know that $z(p)\not=0$.
  
Let $\{e_1,e_2,\cdots,e_{n+1}\}$ be an orthonormal frame near the boundary satisfying $e_{n+1}|_{\Sigma}=\nu$. $\forall \, i,j \in \,\{1,2,\cdots,n\}$, we have
$$
\begin{aligned}
            z_{ij}
            &= e_i(e_jz)-\nabla^{\Sigma}_{e_i}e_j z\\ 
            &= e_i(e_jz)-\nabla_{e_i}e_j z -h_{ij}f_{\nu}\\
            &= \nabla^2 f(e_i,e_j) -h_{ij}f_{\nu}\\
            &=-z\delta_{ij}-ch_{ij}z\\
            &=-(ch_{ij}+\delta_{ij})z
\end{aligned}
$$
and
$$
\begin{aligned}
cz_{i}
&= e_i(e_{n+1}z)-\nabla_{e_i}e_{n+1} f+\nabla_{e_i}e_{n+1}f\\ 
&= \nabla^2 f(e_i,e_{n+1})+\sum_{j=1}^{n} h_{ij}z_{j}\\
&= \sum_{j=1}^{n} h_{ij}z_{j}.
\end{aligned}
$$
By taking covariant differentiation with respect to $e_k$ ($k\in \,\{1,2,\cdots,n\}$) at $p$, we have
$$
\begin{aligned}
-c(ch_{ik}+\delta_{ik})z
&= cz_{ik}\\ 
&= \sum_{j=1}^{n} e_{k}h_{ij}z_{j}+\sum_{j=1}^{n} h_{ij}z_{jk}\\
&= \sum_{j=1}^{n} -h_{ij}(ch_{jk}+\delta_{jk})z,
\end{aligned}
$$
which implies
$$
c^2h(p)+cI=ch^2(p)+h(p),
$$
where $h(p)$ is the $n \times n$ matrix $(h_{ij})$ and $I$ is the identity matrix.
We then have
$$
(ch(p)+I)(h(p)-cI)=0.
$$
Since the mean curvature $H \geq c$, we obtain that
$$
h(p)=cI.
$$
This finishes the proof of the lemma.
\end{proof}

Now, we can prove that $\{x \in  \Omega_0~|~ \phi(x)=\tan^{-1}{(\frac{1}{c})}\} =\{x_0\}$.

\begin{proposition}
The set $\{x \in  \Omega_0~|~ \phi(x)=\tan^{-1}{(\frac{1}{c})}\}$ consists of a single point, denoted by $x_0$.
\label{prop:4.4}
\end{proposition}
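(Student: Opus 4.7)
The strategy is to leverage Lemma~\ref{lemma:4.3} to pin down $\nabla^2_{\Omega_0}\phi$ at every maximum point of $\phi$, and then to conclude by a gradient-flow/connectedness argument. Set
\[
A \;:=\; \bigl\{x\in \Omega_0 \,\big|\, \phi(x)=\tan^{-1}(1/c)\bigr\};
\]
compactness of $\Omega_0$ guarantees $A \neq \varnothing$, and squaring the defining equation for $\phi$ produces the eikonal-type identity
\[
|\nabla\phi|^2 \;=\; \frac{\cos^2\phi\,(\cos^2\phi - c^2\sin^2\phi)}{c^2\sin^2\phi},
\]
so $\nabla\phi$ vanishes exactly on $A$.

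First I would show that every $x_0\in A$ is a strict, isolated local maximum. Set $p = (x_0,\phi(x_0))\in\Sigma$. Combining $|\nabla f|^2+f^2=1$ with $f_\nu = cf$ on $\Sigma$ gives $|\nabla_{\Sigma} f|^2 = 1-(1+c^2)f^2$, so $f^2 \leq 1/(1+c^2)$ on $\Sigma$ with equality exactly at critical points of $f|_{\Sigma}$. Since $f(p)=\sin(\tan^{-1}(1/c)) = 1/\sqrt{1+c^2}$, the point $p$ is a critical point of $f|_{\Sigma}$, and Lemma~\ref{lemma:4.3} yields $h(p)=c\,g|_{\Sigma}(p)$. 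At the critical point $x_0$ of $\phi$, the graph $\Sigma$ has horizontal tangent space at $p$, the outward unit normal is $\partial/\partial t$, and the warped-product Christoffel symbols $\Gamma^t_{ij} = \sin\phi\cos\phi\,(g_{\Omega_0})_{ij}$ give
\[
h(\partial_i,\partial_j) \;=\; -(\nabla^2_{\Omega_0}\phi)(\partial_i,\partial_j) - \sin\phi\cos\phi\,(g_{\Omega_0})_{ij} \quad\text{at } p.
\]
Equating with $c\cos^2\phi\,(g_{\Omega_0})_{ij}$ and using the algebraic identity $\sin\phi\cos\phi+c\cos^2\phi = c$ valid at $\phi=\tan^{-1}(1/c)$, I obtain the rigid pointwise identity
\[
\nabla^2_{\Omega_0}\phi(x_0) \;=\; -c\,g_{\Omega_0}(x_0),
\]
which is strictly negative definite. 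Hence $A$ is a closed, discrete subset of the compact manifold $\Omega_0$, and is therefore finite.

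To conclude $|A|=1$, I would run the forward gradient flow of $\nabla\phi$ on $\Omega_0^{\circ}$. Since $\phi$ strictly increases along the flow and vanishes on $\partial\Omega_0$, each trajectory stays in $\Omega_0^{\circ}$; the eikonal identity shows $|\nabla\phi|^2$ is bounded below on any sublevel set $\{\phi\leq \tan^{-1}(1/c)-\varepsilon\}$, so $\phi$ converges to $\tan^{-1}(1/c)$ along the flow, and each point of $A$ being a non-degenerate attracting equilibrium (by the Hessian identity above) forces the trajectory to converge to a single point $\pi(x)\in A$. Continuous dependence of the flow on initial data makes each basin $B_{x_0}:=\pi^{-1}(x_0)$ open, and $\{B_{x_0}\}_{x_0\in A}$ is a disjoint open cover of $\Omega_0^{\circ}\setminus A$. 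Since $\Omega_0$ is connected and $n\geq 2$, $\Omega_0^{\circ}$ is connected and removing the finite set $A$ preserves connectedness; therefore exactly one basin is non-empty, i.e. $|A|=1$.

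\textbf{Main obstacle.} The heart of the argument is the bookkeeping in the first step: one must correctly identify the outward normal of the graph in the warped ambient at a critical point of $\phi$, compute the graph second fundamental form $-\nabla^2_{\Omega_0}\phi-\sin\phi\cos\phi\,g_{\Omega_0}$, and spot the key simplification $\sin\phi\cos\phi+c\cos^2\phi = c$ at $\phi=\tan^{-1}(1/c)$ that, together with Lemma~\ref{lemma:4.3}, forces the Hessian to be exactly $-c\,g_{\Omega_0}$. Once this rigid pointwise identity is in hand, the uniqueness of the maximum reduces to a routine gradient-flow/connectedness argument.
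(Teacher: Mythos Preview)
Your proof is correct and takes a genuinely different route from the paper's.

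Both arguments hinge on Lemma~\ref{lemma:4.3}, but they deploy it differently. The paper first invokes transnormal-function theory (Wang's Theorem~A in \cite{MR0901710}) to know that $M=\{f|_\Sigma=1/\sqrt{c^2+1}\}$ is a submanifold of $\Sigma$; then, assuming $\dim M\ge 1$, it picks a curve $\gamma\subset M\subset\widehat{\Omega}_{\tan^{-1}(1/c)}$ and computes $h(\gamma',\gamma')$ two ways---once via the slice ($=-1/c$) and once via Lemma~\ref{lemma:4.3} ($=c$)---to force $\dim M=0$. Finally it cites Lemma~2.6 of \cite{chen2020obata} (a connectedness statement for the extremal level set of $f|_\Sigma$) to pass from ``finite'' to ``single point''. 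Your argument avoids both external citations: the graph second-fundamental-form computation $h=-\nabla^2_{\Omega_0}\phi-\sin\phi\cos\phi\,g_{\Omega_0}$ at a critical point, combined with Lemma~\ref{lemma:4.3}, yields the full Hessian $\nabla^2_{\Omega_0}\phi=-c\,g_{\Omega_0}$ directly, so $A$ is discrete without needing $M$ to be a submanifold a priori. For uniqueness you replace the cited lemma by an elementary gradient-flow/basin argument on $\Omega_0^\circ$, using that the only equilibria are nondegenerate sinks and that $\Omega_0^\circ\setminus A$ is connected when $n\ge 2$.

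What each approach buys: the paper's proof is shorter because it outsources the structural facts, and its contradiction $-1/c=c$ is especially clean. Your proof is self-contained and actually extracts more information (the exact Hessian of $\phi$ at the maximum), at the cost of the extra bookkeeping in the graph computation and the dynamical-systems step. One small remark: your assertion that the basins cover $\Omega_0^\circ\setminus A$ quietly uses that the $\omega$-limit set of each forward trajectory is nonempty, compact, connected, and contained in the finite set $A$; this is fine since every forward orbit is eventually trapped in the compact set $\{\phi\ge\phi(x)\}\subset\Omega_0^\circ$, but it is worth saying explicitly.
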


\begin{proof}
Since $f=\sin{t}$, we only need to prove that
$\{x\in \Sigma~|~ f(x)=\frac{1}{\sqrt{c^2+1}}\}$ consists of a single point. Since
$|\nabla_{\Sigma} f|^2+(c^2+1)f^2=1$, we know that $f|_{\Sigma}$ is a transnormal function on $\Sigma$. Then by Theorem A in \cite{MR0901710}, we know that $\{x \in  \Sigma| f(x)=\frac{1}{\sqrt{c^2+1}}\}$ is a submanifold of $\Sigma$, denoted by $M$. In particular, we know that any points in $M$ are critical points of $f|_{\Sigma}$ and the outward unit normal $\nu$ satisfies $\nu=\frac{\partial}{\partial t}$ on $M$. Assume that dim$M \geq1$, then we can consider a smooth curve $\gamma :[0,l] \rightarrow M$ with arc length parameter. Obviously, $\gamma$ is a smooth curve in $\Sigma$ and we also know that $\gamma([0,l]) \subset \widehat{\Omega}_{\tan^{-1}{(\frac{1}{c})}}$. We then have
$$
\begin{aligned}
            h (\gamma^{\prime},\gamma^{\prime})
            &= -\langle \nabla^{\widehat{\Omega}}_{\gamma^{\prime}} \gamma^{\prime} , \nu\rangle\\ 
            &= -\langle \nabla^{\widehat{\Omega}}_{\gamma^{\prime}} \gamma^{\prime} , \frac{\partial }{\partial t}\rangle\\
            &=h_{\widehat{\Omega}_{\tan^{-1}{(\frac{1}{c})}}}(\gamma^{\prime},\gamma^{\prime})\\
            &=-\frac{1}{c}\\
            &<0.
\end{aligned}
$$
However, by Lemma \ref{lemma:4.3}, we have    
$$ 
h (\gamma^{\prime},\gamma^{\prime})=c>0,
$$
which is a contradiction.
Therefore, dim$M =0$. Then by Lemma 2.6 in \cite{chen2020obata}, 
we know that $M$ is connected and consists of a single point. Hence, 
$\{x \in  M_0~|~ \phi(x)=\tan^{-1}{(\frac{1}{c})}\}=\{x_0\}$.
\end{proof}

The remaining part of the proof of Theorem \ref{theorem: 1.4} is similar to that of Theorem \ref{theorem: 1.3}.

\begin{proof4}
Set 
$$
v=\tan^{-1}({\frac{1}{c}\sqrt{1-(c^2+1)\sin^2{\phi}}}),
$$
$v$ then has the following properties\\
(1) $v \in  [0, \tan^{-1}{(\frac{1}{c})}]$ and $v|_{\partial \Omega_0}=\tan^{-1}({\frac{1}{c}})$.\\
(2)  $v$ is smooth at any $x$ with $v(x) \in \, (0, \tan^{-1}{(\frac{1}{c})})$. \\
(3)  $\{x\in\Omega_0 | v(x)=0\}=\{x_0\}$.\\
(4)  On $\Omega_0- (\partial \Omega_0 \cup \{x_0\})$, $|\nabla _{\Omega_0} v|\equiv 1$. 

By a similar argument in the proof of Theorem \ref{theorem: 1.3}, we conclude that $\Omega_0$ is a geodesic ball
of radius $\tan^{-1} {(\frac{1}{c})}$ in the standard sphere $\mathbb{S}^{n}$ centered at $x_0$
and the graph function $\phi$ satisfies
$$
\frac{\tan^2{d(x,x_0)}}{\tan^2{r}}+\frac{\sin^2{\phi(x)}}{\sin^2{r}}=1,
$$
where $r=\tan^{-1}(\frac{1}{c}).$
By Proposition \ref{prop:2.4}, we know that $\Sigma$ is a geodesic sphere in the standard sphere $\mathbb{S}^{n+1}$ of
radius $\tan^{-1}(\frac{1}{c})$ centered at $x_0$ and thus $\Omega$ is a geodesic ball
of radius $\tan^{-1} {(\frac{1}{c})}$ in the standard sphere $\mathbb{S}^{n+1}$.
\end{proof4}

As the end of this section, we give some applications of the Theorems \ref{theorem: 1.3} and \ref{theorem: 1.4} as follows.

Let $(\Omega^{n+1},g)$ be an $(n+1)$-dimensional smooth compact connected Riemannian manifold with smooth boundary $\Sigma$. Assume that the Ricci curvature of $\Omega$ satisfies Ric$^{\Omega}\geq -n$ and the principal curvatures of $\Sigma$ are bounded from below by a positive constant $c>1$. Then we know that $\Sigma$ is connected. Let $u$ be an eigenfunction corresponding to the first nonzero
eigenvalue $\lambda_1$ of the Laplacian on $\Sigma$.

Then we know that the Dirichlet problem 
\begin{equation*}
\begin{cases}
\begin{aligned}
&\Delta f=(n+1)f,& {\text{in}} \,\Omega,\\
&f=u,&{\text{on}} \, \Sigma
\end{aligned}
\end{cases}
\end{equation*}
has a unique solution $f \in C^{\infty}(\Omega)$. We then have the following inequality and rigidity result.

\begin{corollary}
Let $\Omega$ and $f$ be the manifold and smooth function discussed above. If we further assume that the mean curvature of $\Sigma$ is bounded from below by $\frac{\lambda_1 + n}{nc}$. Then we have
$$
c||u||^2_{L^2(\Sigma)}   \geq  (u,f_{\nu})_{L^2(\Sigma)}.
$$
Moreover,  equality holds if and only if $\Omega$ is isometric 
to a geodesic ball of radius $\tanh^{-1} {(\frac{1}{c})}$ 
in the hyperbolic space $\mathbb{H}^{n+1}$.
\label{corollary:4.5}
\end{corollary}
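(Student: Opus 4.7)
The plan is a Reilly-type argument. Write $I = (u, f_\nu)_{L^2(\Sigma)}$, $U = \|u\|^2_{L^2(\Sigma)}$, and $F = \|f_\nu\|^2_{L^2(\Sigma)}$; the target is $I \leq c U$. Start from the Reilly identity
\begin{equation*}
\int_\Omega \bigl[(\Delta f)^2 - |\nabla^2 f|^2 - \mbox{Ric}^\Omega(\nabla f,\nabla f)\bigr]\, dV = \int_\Sigma \bigl[2 f_\nu \Delta_\Sigma u + n H f_\nu^2 + h(\nabla_\Sigma u, \nabla_\Sigma u)\bigr]\, dv.
\end{equation*}
Feed in (i) the Bochner--Kato inequality $\int_\Omega |\nabla^2 f|^2\, dV \geq (n+1)\int_\Omega f^2\, dV$, which follows from expanding $|\nabla^2 f - f g|^2 \geq 0$ and using $\Delta f = (n+1) f$ (with equality iff $\nabla^2 f = f g$); (ii) $\mbox{Ric}^\Omega \geq -n$; and (iii) the divergence identity $\int_\Omega |\nabla f|^2\, dV = I - (n+1)\int_\Omega f^2\, dV$. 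The interior integrals collapse to $n I$, and with $\Delta_\Sigma u = -\lambda_1 u$ the identity reduces to
\begin{equation*}
\int_\Sigma n H f_\nu^2 \, dv + \int_\Sigma h(\nabla_\Sigma u, \nabla_\Sigma u)\, dv \leq (n + 2 \lambda_1)\, I.
\end{equation*}

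Next, I would apply the two boundary curvature hypotheses $H \geq (\lambda_1 + n)/(n c)$ and $h \geq c\, g|_\Sigma$ (the latter from the principal curvatures being $\geq c$), together with $\int_\Sigma |\nabla_\Sigma u|^2 \, dv = \lambda_1 U$, to obtain
\begin{equation*}
\tfrac{\lambda_1 + n}{c}\, F + c \lambda_1 U \leq (n + 2\lambda_1)\, I.
\end{equation*}
Cauchy--Schwarz on $\Sigma$ gives $F \geq I^2/U$; inserting this, multiplying through by $c U$, and factoring yields
\begin{equation*}
\bigl[(\lambda_1 + n)\, I - c \lambda_1 U\bigr] \bigl[I - c U\bigr] \leq 0.
\end{equation*}
Evaluating the first factor at $I = c U$ gives $n c U > 0$; hence if $I > c U$ then both factors would be strictly positive, contradicting $\leq 0$. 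Therefore $I \leq c U$, which is precisely $c \|u\|^2_{L^2(\Sigma)} \geq (u, f_\nu)_{L^2(\Sigma)}$.

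For the rigidity, $I = c U$ forces the derived quadratic to vanish, which in turn forces every inequality used above to be sharp. The Bochner--Kato step then yields $\nabla^2 f = f g$ in $\Omega$, and the Cauchy--Schwarz equality on $\Sigma$ combined with $I = cU$ gives $f_\nu = c u = c f$ on $\Sigma$. Thus $f$ is a non-constant solution of equation (\ref{eqn-1}); and since $\mbox{Ric}^\Omega \geq -n$ and $H \geq c$ (from the principal curvatures being $\geq c$) are exactly the curvature assumption ($K_1$), Theorem \ref{theorem: 1.3} identifies $\Omega$ with the geodesic ball of radius $\tanh^{-1}(1/c)$ in $\mathbb{H}^{n+1}$. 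Conversely, in that ball one has $\sinh^2(\tanh^{-1}(1/c)) = 1/(c^2-1)$, giving $\lambda_1 = n(c^2-1)$ and hence $(\lambda_1 + n)/(n c) = c = H$, and a direct computation shows equality is realised. The main obstacle is the algebraic book-keeping: arranging Reilly, Bochner--Kato, the Ricci bound, and both boundary curvature hypotheses so that the resulting inequality factors as $[(\lambda_1+n) I - c \lambda_1 U][I - cU] \leq 0$, from which the estimate and its rigidity drop out together.
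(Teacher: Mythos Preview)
Your argument is correct and follows the same overall Reilly-type strategy as the paper: both arrive at the intermediate inequality
\[
\frac{\lambda_1+n}{c}\,F \;+\; c\lambda_1\,U \;\leq\; (n+2\lambda_1)\,I
\]
by combining Reilly's identity, the trace inequality $|\nabla^2 f - fg|^2\ge 0$, $\mathrm{Ric}^\Omega\ge -n$, the divergence identity, and the two boundary bounds $h\ge c\,g|_\Sigma$, $H\ge(\lambda_1+n)/(nc)$.

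The only genuine difference is the final algebraic step. The paper keeps the inequality in pointwise form and completes the square,
\[
\frac{\lambda_1+n}{c}\,f_\nu^2 - (n+2\lambda_1)\,u f_\nu + c\lambda_1\,u^2
= (\lambda_1+n)\Bigl(\tfrac{f_\nu}{\sqrt c}-\sqrt c\,u\Bigr)^{2} + n(u f_\nu - c u^2),
\]
then drops the nonnegative square to read off $I\le cU$ directly; in the equality case this immediately yields $f_\nu = cu$ pointwise. You instead pass to the integrated quantities, invoke Cauchy--Schwarz $F\ge I^2/U$, and factor the resulting quadratic in $I$. Both routes are valid; the paper's is marginally slicker for rigidity (no separate Cauchy--Schwarz equality case needed), while yours makes the two roots $I=cU$ and $I=\frac{c\lambda_1}{\lambda_1+n}U$ of the quadratic explicit. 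Either way one lands on $\nabla^2 f=fg$, $f_\nu=cf$, and then invokes Theorem~\ref{theorem: 1.3} exactly as you do.
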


\begin{proof}
Since $u$ is a non-constant eigenfunction, we have
$$
\begin{aligned}
\int_{\Sigma}  |\nabla_{\Sigma}u|^2 dv= -\int_{\Sigma} (\Delta_{\Sigma} u)u dv=\lambda_1 \int_{\Sigma} u^2 dv.
\end{aligned}
$$
By the Reilly-type formula in \cite{qiu2015generalization}, we have
$$
\begin{aligned}
&\int_{\Omega} \bigg ([\Delta f-(n+1)f]^2-|\nabla ^2 f - fg|^2 \bigg) dV\\ 
=& \int_{\Sigma} [2(\Delta_{\Sigma} u) f_{\nu} + nH(f_{\nu})^2 + h(\nabla_{\Sigma}u, \nabla_{\Sigma}u)-2nuf_{\nu}]dv\\
&+\int_{\Omega} [Ric^{\Omega}(\nabla f,\nabla f)+2n |\nabla f|^2 +n(n+1)f^2]dV
\end{aligned}
$$
Since $h\geq cI$ and $H\geq \frac{\lambda_1 + n}{nc}>0$, we conclude that
$$
\begin{aligned}
0&\geq\int_{\Omega} \bigg([\Delta f-(n+1)f]^2-|\nabla ^2 f - fg|^2\bigg)dV\\
&\geq \int_{\Sigma}[ -2\lambda_1 u f_{\nu} + \frac{\lambda_1+n}{c}(f_{\nu})^2 + c\lambda_1u^2-2nuf_{\nu}]dv\\
&+\int_{\Omega} [n |\nabla f|^2 +n(n+1)f^2]dV\\
&= \int_{\Sigma} [\frac{\lambda_1+n}{c}(f_{\nu})^2- (n+2\lambda_1) u f_{\nu} + c\lambda_1u^2]dv,
\end{aligned}
$$
where the last equality is the divergence theorem. Therefore, we have 
$$
0\geq \int_{\Sigma} [(\lambda_1+n)(\frac{f_{\nu}}{\sqrt{c}}-\sqrt{c}u)^2-ncu^2+nuf_{\nu}]dv.
$$
Then we conclude that
$$
c||u||^2_{L^2(\Sigma)}   \geq  (u,f_{\nu})_{L^2(\Sigma)}  .
$$

If $\Omega$ is isometric to a geodesic ball
of radius $\tanh^{-1} {(\frac{1}{c})}$ in the hyperbolic space $\mathbb{H}^{n+1}$, it is easy for us to check that $c||u||^2_{L^2(\Sigma)}   =  (u,f_{\nu})_{L^2(\Sigma)}  .$
Now we assume that $c||u||^2_{L^2(\Sigma)}   =  (u,f_{\nu})_{L^2(\Sigma)}$. Obviously, the above inequalities must take equality sign, we then have
\begin{equation*}
\begin{cases}
\begin{aligned}
&\nabla^2 f- fg=0,& {\text{in}} \,\Omega,\\
&f_{\nu}-cf=0,&{\text{on}} \, \Sigma.
\end{aligned}
\end{cases}
\end{equation*}
By Theorem \ref{theorem: 1.3}, we know that $\Omega$ is isometric to a geodesic ball
of radius $\tanh^{-1} {(\frac{1}{c})}$ in the hyperbolic space $\mathbb{H}^{n+1}$.
\end{proof}

Similarly, we also have the following

Let $(\Omega^{n+1},g)$ be an $(n+1)$-dimensional smooth compact connected Riemannian manifold with smooth boundary $\Sigma$. Assume that the Ricci curvature of $\Omega$ satisfies Ric$^{\Omega}\geq n$ and the principal curvatures of $\Sigma$ are bounded from below by a positive constant $c$. Then $\Sigma$ is connected. Let $u$ be an eigenfunction corresponding to the first nonzero
eigenvalue $\lambda_1$ of the Laplacian on $\Sigma$.

By Theorem 4 in \cite{reilly1977applications}, we know that the Dirichlet problem 
\begin{equation*}
\begin{cases}
\begin{aligned}
&\Delta f+(n+1)f=0,& {\text{in}} \,\Omega,\\
&f=u,&{\text{on}} \, \Sigma
\end{aligned}
\end{cases}
\end{equation*}
has a unique solution $f \in C^{\infty}(\Omega)$. We then have the following inequality and rigidity result.
\begin{corollary}
Let $\Omega$ and $f$ be the manifold and smooth function discussed above. If we further assume that the mean curvature of $\Sigma$ is bounded from below by $\frac{\lambda_1 - n}{nc}>0$, we then have
$$
(u,f_{\nu})_{L^2(\Sigma)} \geq c||u||^2_{L^2(\Sigma)}.
$$
Moreover, equality holds if and only if $\Omega$ is isometric to a geodesic ball
of radius $\tan^{-1} {(\frac{1}{c})}$ in the standard sphere $\mathbb{S}^{n+1}$.
\label{corollary:4.6}
\end{corollary}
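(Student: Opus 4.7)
\textbf{My approach} is to mirror the proof of Corollary \ref{corollary:4.5}, with signs adjusted for the spherical setting. The key tool is the Qiu--Xia generalized Reilly-type formula, applied to the modified Hessian $\nabla^2 f + fg$ (so as to match the spherical Obata operator). First I would write down this formula. Since $\Delta f + (n+1)f \equiv 0$ by the PDE, its left-hand side reduces to $-\int_\Omega |\nabla^2 f + fg|^2 dV \leq 0$, with equality iff $\nabla^2 f + fg \equiv 0$ on $\Omega$.

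Next I would bound the right-hand side from below using the three curvature hypotheses: $\Delta_\Sigma u = -\lambda_1 u$ on the $2(\Delta_\Sigma u) f_\nu$ term, $H \geq (\lambda_1 - n)/(nc)$ on the $nH(f_\nu)^2$ term, $h \geq c g|_\Sigma$ together with $\int_\Sigma |\nabla_\Sigma u|^2 dv = \lambda_1 \|u\|_{L^2(\Sigma)}^2$ on the $h(\nabla_\Sigma u, \nabla_\Sigma u)$ term, and $\mbox{Ric}^\Omega \geq n$ on the interior Ricci term. Integration by parts on $n(n+1)\int_\Omega f^2 dV = -n\int_\Omega f\Delta f\, dV$ (using the PDE) converts the interior $f^2$-mass into $n\int_\Omega |\nabla f|^2 dV - n(u, f_\nu)_{L^2(\Sigma)}$, at which point the $|\nabla f|^2$ pieces combine with the Ricci bound and only a boundary pairing survives. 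Assembling everything yields
$$
0 \geq \int_\Sigma \left[\tfrac{\lambda_1 - n}{c}(f_\nu)^2 + (n - 2\lambda_1) u f_\nu + c\lambda_1 u^2\right] dv.
$$
Completing the square, the integrand rewrites as $(\lambda_1 - n)\bigl(\tfrac{f_\nu}{\sqrt c} - \sqrt c\, u\bigr)^2 - n u f_\nu + nc u^2$. Since $\lambda_1 > n$ (guaranteed by $H > 0$), dropping the nonnegative square and rearranging produces the claimed $(u, f_\nu)_{L^2(\Sigma)} \geq c \|u\|_{L^2(\Sigma)}^2$.

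For rigidity, equality saturates both the reduction to zero (forcing $\nabla^2 f + fg = 0$) and the completed square (forcing $f_\nu = cf$ on $\Sigma$, since $\lambda_1 - n > 0$), so $f$ solves equation (\ref{eqn-2}). The principal-curvature hypothesis gives $H \geq c$, so the hypotheses of Theorem \ref{theorem: 1.4} are met, and I invoke it to conclude that $\Omega$ is isometric to a geodesic ball of radius $\tan^{-1}(1/c)$ in $\mathbb{S}^{n+1}$. The converse---that this geodesic ball achieves equality---is a short verification using the linear function $f(x) = \langle x, v\rangle$ on $\mathbb{S}^{n+1} \subset \mathbb{R}^{n+2}$ with $v$ orthogonal to the ball's center. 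The main obstacle is really just bookkeeping: the algebra parallels Corollary \ref{corollary:4.5} but carries several sign flips (from $+fg$ vs.\ $-fg$ and $\mbox{Ric}\geq n$ vs.\ $\geq -n$), and one must check that the coefficients align so that the square to be dropped is exactly $(\lambda_1 - n)(\cdots)^2$, making the hypothesis $\lambda_1 > n$ (equivalently $H > 0$) exactly what the argument needs.
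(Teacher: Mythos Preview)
Your proposal is correct and follows exactly the approach the paper indicates (the paper omits the proof, saying only that it is similar to Corollary~\ref{corollary:4.5}); the sign changes you track and the resulting quadratic $\int_\Sigma\bigl[\tfrac{\lambda_1-n}{c}(f_\nu)^2+(n-2\lambda_1)uf_\nu+c\lambda_1 u^2\bigr]dv$ are right, and the completion of the square together with Theorem~\ref{theorem: 1.4} gives the rigidity. One small remark: the condition $\lambda_1>n$ is built directly into the hypothesis $\tfrac{\lambda_1-n}{nc}>0$, so you need not appeal to ``$H>0$'' to justify it.
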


The proof of Corollary \ref{corollary:4.6} is similar to that of Corollary \ref{corollary:4.5}, so we omit it.

\bibliographystyle{plain}
\bibliography{ref}

\vskip .5cm
\noindent
Yiwei Liu:\\
School of Mathematical Sciences, Shanghai Jiao Tong University\\
email: lyw201611012@sjtu.edu.cn

\vskip .5cm
\noindent
Yi-Hu Yang:\\
School of Mathematical Sciences, Shanghai Jiao Tong University\\
email: yangyihu@sjtu.edu.cn
\end{document}